\newtheorem{thm}{Theorem}
\newtheorem{lemma}[thm]{Lemma}
\newtheorem{cor}[thm]{Corollary}
\begin{document}

\title[On the distribution of partial quotients of reduced fractions]{On the distribution of partial quotients of reduced fractions with fixed denominator}
\author{Christoph Aistleitner}
\author{Bence Borda}
\author{Manuel Hauke}
\address{Graz University of Technology, Institute of Analysis and Number Theory, Steyrergasse 30/II, 8010 Graz, Austria}
\email{aistleitner@math.tugraz.at}
\email{borda@math.tugraz.at}
\email{hauke@math.tugraz.at}

\subjclass[2020]{Primary 11K50; Secondary 11N25, 68Q25}
\keywords{Continued fractions, Diophantine approximation, Dedekind sum, Gau{\ss}--Kuzmin distribution}

\renewcommand{\labelenumi}{\alph{enumi})}

\begin{abstract} 
In this paper, we study distributional properties of the sequence of partial quotients in the continued fraction expansion of fractions $a/N$, where $N$ is fixed and $a$ runs through the set of mod $N$ residue classes which are coprime with $N$. Our methods cover statistics such as the sum of partial quotients, the maximal partial quotient, the empirical distribution of partial quotients, Dedekind sums, and much more. We prove a sharp concentration inequality for the sum of partial quotients, and sharp tail estimates for the maximal partial quotient and for Dedekind sums, all matching the tail behavior in the limit laws which are known under an extra averaging over the set of possible denominators $N$. We show that the distribution of partial quotients of reduced fractions with fixed denominator gives a very good fit to the Gau{\ss}--Kuzmin distribution. As corollaries we establish the existence of reduced fractions with a small sum of partial quotients resp.\ a small maximal partial quotient.
\end{abstract}
	
\maketitle
	
\section{Introduction and statement of results}

The theory of continued fractions plays a central role in Diophantine approximation, and is of major importance in many other mathematical areas such as algorithmics, dynamical systems, and geometry of numbers, to name only a few. Given an integer $N \ge 1$, throughout this paper $\mathbb{Z}_N^*$ denotes the set of all integers $1 \le a \le N$ which are coprime with $N$. For any $N \ge 2$ and $a \in \mathbb{Z}_N^*$, we write $a/N = [0;a_1,a_2,\ldots,a_r]$ for the continued fraction expansion of $a/N$, with the convention that we always use the canonical representation for which $a_r > 1$. We are interested in statistics such as the sum of partial quotients, the maximal partial quotient, or the number of partial quotients within a certain test window, when the denominator $N$ remains fixed and $a$ runs through the set $\mathbb{Z}_N^*$.

Let
$$
S(a/N)=\sum_{i=1}^r a_i
$$ 
denote the sum of partial quotients. This sum plays a crucial role in estimates for the algorithmic complexity of many important algorithms of an arithmetic nature, as well as for the discrepancy of Kronecker-type sets $(\frac{n a \textup{ mod } N}{N})_{1 \leq n \leq N}$ (in dimension 1) and lattice point sets $(\frac{n}{N},\frac{n a \textup{ mod } N}{N})_{1 \leq n \leq N}$ (in dimension 2). A number $a \in \mathbb{Z}_N^*$ for which $S(a/N)$ is small generates a set of ``good lattice points''. The theory of quadrature rules for numerical integration based on such good lattice point sets was initiated by Hlawka \cite{hla} and Korobov \cite{koro} in the 1960s, and further developed by many other authors since then. For more details on this topic we refer the reader to \cite{dkp,nied}.

These examples motivate questions such as: Given a denominator $N$, what is the smallest possible order of $S(a/N)$ among all reduced fractions $a/N$? What is the typical order of $S(a/N)$, and how likely is the sum of partial quotients  to deviate significantly from this typical behavior? How many partial quotients of a given size does a reduced fraction $a/N$ typically have? What is the smallest possible order of the maximal partial quotient of $a/N$? Many questions of this type are addressed by the theorems in this paper.

The average value of $S(a/N)$ was found independently by Panov \cite{panov} and Liehl \cite{liehl}, who showed that
\begin{equation} \label{typ_ord} \frac{1}{\varphi (N)} \sum_{a \in \mathbb{Z}_N^*} S \left( \frac{a}{N} \right) \sim \frac{6}{\pi^2} (\log N)^2 \qquad \textrm{as } N \to \infty ,
\end{equation}
where $\varphi$ is the Euler totient function. A similar result for the average over all $1 \le a \le N$ (without the coprimality condition) was obtained by Yao and Knuth \cite{knyao}, who interpreted this as the average number of steps required by the subtractive greatest common divisor algorithm (see also the discussion in Section 4.5 of Knuth's book \cite{knuth}).

Our first theorem is a concentration inequality for the sum of partial quotients. The main message is that the typical order of $S(a/N)$ is $(12/\pi^2) \log N \log \log N$, and that there is only a small proportion of fractions for which $S(a/N)$ deviates significantly from this typical order. The fact that the average order $(\log N)^2$ in equation \eqref{typ_ord} is much larger than the typical order $\log N \log \log N$ in our theorem is caused by this small proportion of exceptional fractions, which might possess one or more unusually large partial quotients. In probabilistic terminology, this deviation between the average order and the typical order is caused by the fact that the distribution of partial quotients is heavy-tailed.
\begin{thm}\label{Stheorem} Let $C>0$ be arbitrary. For all $N \geq 3$ and $0<t \le (\log N)^C$, we have
\[ \frac{1}{\varphi (N)} \left| \left\{ a \in \mathbb{Z}_N^* \, : \, \left| S \left( \frac{a}{N} \right) - \frac{12}{\pi^2} \log N \log \log N \right| \ge t \log N \right\} \right| \ll \frac{1}{t} \]
with an implied constant depending only on $C$.
\end{thm}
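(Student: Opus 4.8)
\emph{Proof plan.} The strategy is to split $S(a/N)$ into a truncated part, which will be strongly concentrated via a second moment estimate, and a tail part coming from a few large partial quotients, which is controlled via a counting estimate. We may assume throughout that $t\ge t_0$ for a suitable absolute constant $t_0$ and that $N$ is sufficiently large: if $t\le t_0$ then $\varphi(N)/t\ge\varphi(N)/t_0$ and the claim is trivial once the implied constant exceeds $t_0$, while for bounded $N$ the parameter $t\le(\log N)^C$ is itself bounded. For a truncation level $T\ge 2$ write
\[
S_T(a/N)=\sum_{i=1}^r\min(a_i,T),\qquad 0\le S(a/N)-S_T(a/N)=\sum_{i=1}^r\max(a_i-T,0).
\]
The proof rests on three ingredients, each established for the fixed modulus $N$ (this being the new difficulty, as the corresponding statements averaged over $N$ are classical):
\begin{enumerate}
\item[(i)] a \emph{counting estimate}: for $1\le k\le N$,
\[
\#\{a\in\mathbb{Z}_N^*\ :\ a_i=k\text{ for some }i\}\ll\frac{\varphi(N)\log N}{k^2},
\]
and hence $\#\{a\in\mathbb{Z}_N^*:\max_i a_i\ge z\}\ll\varphi(N)(\log N)/z$ for $1\le z\le N$;
\item[(ii)] a \emph{first moment estimate}: uniformly for $\log N\le T\le(\log N)^{C+1}$,
\[
\frac{1}{\varphi(N)}\sum_{a\in\mathbb{Z}_N^*}S_T(a/N)=\frac{12}{\pi^2}\log N\log T+O(\log N);
\]
\item[(iii)] a \emph{second moment estimate}: uniformly for $\log N\le T\le(\log N)^{C+1}$, with $\overline{S_T}:=\frac{1}{\varphi(N)}\sum_{b}S_T(b/N)$,
\[
\frac{1}{\varphi(N)}\sum_{a\in\mathbb{Z}_N^*}\bigl(S_T(a/N)-\overline{S_T}\bigr)^2\ll T\log N.
\]
\end{enumerate}
Here (ii) reflects Lévy's count of $\approx(12\log2/\pi^2)\log N$ partial quotients together with the identity $\sum_{j=1}^{T}\log_2(1+1/j)=\log_2(T+1)$ for the truncated Gau{\ss}--Kuzmin mean, and (iii) reflects that $\min(a_i,T)$ has variance of order $T$ and that the partial quotients are almost uncorrelated. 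Note that (ii) with $T=\log N$ gives $\overline{S_T}=\frac{12}{\pi^2}\log N\log\log N+O(\log N)$, while (ii) with $T=t\log N$ (admissible since $t\le(\log N)^C$) gives $\overline{S_T}=\frac{12}{\pi^2}\log N\log\log N+\frac{12}{\pi^2}(\log t)(\log N)+O(\log N)$.

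\emph{Lower tail.} Since $S_{\log N}\le S$, the event $S(a/N)\le\frac{12}{\pi^2}\log N\log\log N-t\log N$ forces $S_{\log N}(a/N)\le\frac{12}{\pi^2}\log N\log\log N-t\log N$, hence by (ii) and $t\ge t_0$ also $S_{\log N}(a/N)-\overline{S_{\log N}}\le-\tfrac t2\log N$. By (iii) with $T=\log N$ and Chebyshev's inequality the number of such $a$ is $\ll\varphi(N)(\log N)^2/(t^2(\log N)^2)=\varphi(N)/t^2\le\varphi(N)/t$.

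\emph{Upper tail.} Split according to the largest partial quotient. If $\max_i a_i\ge t\log N$, then by (i) the number of such $a$ is $\ll\varphi(N)(\log N)/(t\log N)=\varphi(N)/t$. If $\max_i a_i<t\log N$, then $S(a/N)=S_{t\log N}(a/N)$, so the event $S(a/N)\ge\frac{12}{\pi^2}\log N\log\log N+t\log N$ becomes $S_{t\log N}(a/N)\ge\frac{12}{\pi^2}\log N\log\log N+t\log N$; by (ii) with $T=t\log N$ this forces
\[
S_{t\log N}(a/N)-\overline{S_{t\log N}}\ \ge\ t\log N-\tfrac{12}{\pi^2}(\log t)(\log N)-O(\log N)\ \ge\ \tfrac t2\log N
\]
for $t\ge t_0$ large enough, as $t-\tfrac{12}{\pi^2}\log t\to\infty$. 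By (iii) with $T=t\log N$ and Chebyshev's inequality the number of such $a$ is $\ll\varphi(N)(t\log N)(\log N)/(t^2(\log N)^2)=\varphi(N)/t$. Adding the two contributions settles the upper tail, and together with the lower tail this proves the theorem (the implied constant depending on $C$ only through the admissible range of $T$).

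\emph{Main obstacle.} The real content is to prove (i)--(iii) for an individual modulus $N$ rather than on average over $N$. Estimate (i) will come from a continuant (Stern--Brocot) decomposition of $a/N$ at a position carrying a large partial quotient, rewriting the count in terms of representations $N=qu+q'v$ with $0\le q'<q$ and $u/v$ large. The genuinely hard step is (iii): one must show that $\operatorname{Cov}(\min(a_i,T),\min(a_j,T))$, with the average taken over $a\in\mathbb{Z}_N^*$, decays fast enough in $|i-j|$ to be summable over $i\ne j$, which needs quantitative equidistribution for the convergent denominators of $a/N$ with $a\in\mathbb{Z}_N^*$ — input of a type not furnished by the dynamics of the Gau{\ss} map alone.
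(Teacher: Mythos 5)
Your overall architecture (first/second moment of a truncated version, Chebyshev, plus separate control of the tail via a counting bound for large partial quotients) is indeed the paper's architecture in spirit, and your ingredients (i) and (ii) are essentially Theorems \ref{Mtheorem}/\ref{Ltheorem} and Theorem \ref{expectedvaluetheorem}. But there is a genuine gap at (iii), and the proof does not close without a further idea that the paper supplies and you do not.

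The claimed variance bound $\mathrm{Var}(S_T)\ll T\log N$ is not established, and it is not what the paper's second-moment machinery produces. Applying Theorem \ref{variancetheorem} to $f(x)=x$, $\eta=1$, $\theta=T$ (the essential content of your $S_T$ after splitting off the capped excess), the dominant error term is $D_{f,1,T}(\log N)^2\asymp(\log T)(\log N)^2$; together with the $B_{f^2,1,T}\log N\asymp T\log N$ term this yields $\mathrm{Var}(S_T)\ll(\log T)(\log N)^2+T\log N$, not $\ll T\log N$. For $T$ in your range $[\log N,(\log N)^{C+1}]$ one has $\log T\asymp\log\log N$, so the variance bound is $\asymp(\log\log N)(\log N)^2$. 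Plugging this into Chebyshev with threshold $\tfrac t2\log N$ gives a probability $\ll(\log\log N)/t^2$, which is weaker than $1/t$ precisely in the most delicate regime $t\lesssim\log\log N$, i.e.\ for the body of the distribution. So with the variance estimates that are actually available, your argument does not give the theorem. The extra $\log T$ loss is not an artifact that can be trivially removed: it originates in the $O(1/m^4)$ error in the interval lengths $\lambda(I(b/k,m))$ and would indeed require finer second-order equidistribution input to suppress, as you yourself note.

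The paper circumvents the problem not by proving a sharper variance bound for a monolithic truncation, but by a \emph{dyadic decomposition in the size of the partial quotient}: with $U=\lfloor\log_2(t\log N)\rfloor$ one writes $S=\sum_{u=1}^U S^{(u)}+(\text{tail})$, where $S^{(u)}$ sums $a_i$ over $2^{u-1}\le a_i\le 2^u-1$. On each dyadic block the constants localize ($B\ll1$, $D\ll2^{-2u}$, $B_{f^2}\ll2^u$) and Theorem \ref{variancetheorem} yields the clean bound $\mathrm{Var}(S^{(u)})\ll2^u\log N+2^{-2u}(\log N)^2$. One then applies Chebyshev to each $S^{(u)}$ with the scale-dependent threshold $t\log N/\bigl(10\min\{u,U-u\}^2\bigr)$, whose sum over $u$ is at most $\tfrac t2\log N$, and the union bound
\[
\sum_{u=1}^U\left(\frac{2^u(U-u)^4}{t^2\log N}+\frac{2^{-2u}u^4}{t^2}\right)\ll\frac1t
\]
telescopes because $2^U\le t\log N$. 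This variable-threshold dyadic scheme is exactly what absorbs the logarithmic losses that sink the single-truncation approach, and it is the missing ingredient in your proposal. If you want to save your plan you would need to either prove the genuinely sharper variance bound $\mathrm{Var}(S_T)\ll T\log N$ (which appears to demand new equidistribution input beyond Lemma \ref{lemma_Manuel}) or switch to the dyadic split.
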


This improves a result of Rukavishnikova \cite{ruka_1,ruka_2}, who proved the upper tail estimate
\begin{equation}\label{ruka}
\frac{1}{\varphi (N)} \left| \left\{ a \in \mathbb{Z}_N^* \, : \, S \left( \frac{a}{N} \right)\geq t  \log N \log \log N \right\} \right| \ll \frac{1}{t}
\end{equation}
in the same range $0<t \leq (\log N)^C$. Rukavishnikova obtained her result by establishing an asymptotic upper bound for the average value of a restricted sum of partial quotients. In contrast, in this paper we establish a precise asymptotics for the average value, together with an upper bound for the variance of such restricted sums, which allows us to deduce the concentration inequality in Theorem \ref{Stheorem}.

Theorem \ref{Stheorem} should be compared to recent results of Bettin and Drappeau \cite{bd2}, who proved that under an additional averaging over the set of all denominators $N$ up to a certain threshold, the distribution of a suitably centered and rescaled version of $S(a/N)$ converges to a limit distribution. More precisely, when considering all reduced fractions $a/N$ in $[0,1]$ for which $N \leq Q$ (the so-called Farey fractions of order $Q$, denoted by $F_Q$), then for all real $t$,
$$
\lim_{Q \to \infty} \frac{1}{|F_Q|} \left| \left\{  \frac{a}{N} \in F_Q \, : \, \frac{S(a/N) - \frac{12}{\pi^2} \log N \log \log N}{\log N} \le t \right\} \right| = F(t) ,
$$
where $F(t)$ is the cumulative distribution function of a specific stable law with stability parameter $1$. In particular, the tail probabilities are of order $1-F(t) \sim c/t$ with some constant $c>0$ as $t \to \infty$, matching the tail estimate in our Theorem \ref{Stheorem} for the case of fixed denominators. Therefore Theorem \ref{Stheorem} is optimal up to the value of the implied constant.

We note that the methods of Bettin and Drappeau are very different from the ones used in the present paper, and rely on a dynamical systems approach which builds upon earlier work of Dolgopyat \cite{dolg} and Baladi and Vall\'{e}e \cite{bave}. It remains unclear if a suitably centered and rescaled version of $S(a/N)$ has a limit distribution when $N$ is fixed, and if such a limiting distributional behavior would have to take into account certain arithmetic properties of $N$ (an arithmetic effect which is possibly lost when taking an additional average over all denominators $N$).

As an immediate consequence of Theorem \ref{Stheorem}, we have the following corollary.
\begin{cor} \label{co1}
For all $N \geq 3$, there exists a reduced fraction $a/N$ for which
$$
S \left( \frac{a}{N} \right) \leq \frac{12}{\pi^2} \log N \log \log N + O(\log N).
$$
\end{cor}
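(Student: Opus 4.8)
The plan is to derive this directly from Theorem \ref{Stheorem} by taking $t$ to be a sufficiently large constant. Apply Theorem \ref{Stheorem} with, say, $C=1$, and let $K$ denote the resulting implied constant, so that for all $N \geq 3$ and all $0 < t \leq \log N$,
\[ \frac{1}{\varphi(N)} \left| \left\{ a \in \mathbb{Z}_N^* \, : \, \left| S\left(\frac{a}{N}\right) - \frac{12}{\pi^2} \log N \log \log N \right| \geq t \log N \right\} \right| \leq \frac{K}{t}. \]
Fix a constant $t_0 > K$, so that $K/t_0 < 1$. For every $N$ with $\log N \geq t_0$ the value $t = t_0$ lies in the admissible range, and the bound above then shows that the number of $a \in \mathbb{Z}_N^*$ for which $|S(a/N) - (12/\pi^2)\log N \log\log N| \geq t_0 \log N$ holds is strictly less than $\varphi(N)$. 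Hence some $a \in \mathbb{Z}_N^*$ satisfies the reverse inequality; the corresponding fraction $a/N$ is automatically reduced, and for it
\[ S\left(\frac{a}{N}\right) < \frac{12}{\pi^2}\log N \log\log N + t_0 \log N = \frac{12}{\pi^2}\log N \log\log N + O(\log N). \]

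It then remains to handle the finitely many $N$ in the range $3 \leq N < e^{t_0}$. For each of these I would simply pick any $a \in \mathbb{Z}_N^*$ (for instance $a=1$, where $a/N = [0;N]$ and hence $S(a/N) = N$); since $N$ runs over a finite set, $S(a/N)$ is bounded by an absolute constant, which is absorbed into the error term $O(\log N)$ once the implied constant is chosen large enough (note $\log N \geq \log 3 > 0$ in this range). Combining the two cases gives the corollary.

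I do not anticipate any genuine difficulty here, since the statement is essentially immediate from Theorem \ref{Stheorem}. The only point that requires a little care is that Theorem \ref{Stheorem} is only applicable in the range $t \leq (\log N)^C$, which is why the argument must split off the finitely many small values of $N$ — but for those the conclusion is trivial in any case.
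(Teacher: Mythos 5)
Your proposal is correct and is exactly the intended argument: the paper simply states the corollary as ``an immediate consequence of Theorem~\ref{Stheorem}'' without writing out the details, and your write-up (choose $t$ to be a fixed constant large enough that the exceptional set has proportion $<1$, handle the finitely many small $N$ trivially) is precisely how one fills in that gap.
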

\noindent To put the conclusion of the corollary into perspective, we note that the tail estimate \eqref{ruka} of Rukavishnikova only guarantees the existence of a reduced fraction $a/N$ for which $S(a/N) \ll \log N \log \log N$ with an unspecified implied constant. We also note that a well-known lower bound states that $S(a/N) \gg \log N$ for all reduced fractions $a/N$, so Corollary \ref{co1} is optimal up to the double logarithmic factor.

The next result is a tail estimate for the maximal partial quotient of $a/N$, denoted by
\[ M \left( \frac{a}{N} \right) = \max_{1 \le i \le r} a_i . \]
\begin{thm}\label{Mtheorem} Let $C>0$ be arbitrary. For all $N \geq 3$ and $0<t \le (\log N)^C$, we have
\[ \frac{1}{\varphi (N)} \left| \left\{ a \in \mathbb{Z}_N^* \, : \, M \left( \frac{a}{N} \right) \ge t \log N \right\} \right| \le \frac{12}{\pi^2 t} + O \left( \frac{(\log \log N)^3}{t \log N} \right) \]
with an implied constant depending only on $C$.
\end{thm}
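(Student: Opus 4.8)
The plan is to combine Markov's inequality with a sharp evaluation of the mean number of large partial quotients among reduced fractions with denominator $N$. Write $k=t\log N$. Since $M(a/N)\ge k$ forces $a_i\ge k$ for at least one $i$, we have the pointwise bound $\mathbf 1[M(a/N)\ge k]\le\#\{1\le i\le r:a_i\ge k\}$, so
\[
\frac 1{\varphi(N)}\left|\left\{a\in\mathbb Z_N^*:M\!\left(\tfrac aN\right)\ge k\right\}\right|\le\frac 1{\varphi(N)}\sum_{a\in\mathbb Z_N^*}\sum_{m\ge k}\#\{1\le i\le r:a_i=m\},
\]
and it suffices to bound the right-hand side by $\frac{12}{\pi^2 t}+O\!\left(\frac{(\log\log N)^3}{t\log N}\right)$, which I would do term by term in $m$.

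The inner average $\frac 1{\varphi(N)}\sum_{a}\#\{i:a_i=m\}$ is the mean number of times the value $m$ occurs as a partial quotient of a random reduced fraction with denominator $N$. By the estimates for the empirical distribution of partial quotients that this paper establishes (by the same circle of ideas as Theorem \ref{Stheorem}), this equals $\bigl(\tfrac{12\log 2}{\pi^2}\log N\bigr)\cdot\log_2\!\bigl(1+\tfrac 1{m(m+2)}\bigr)+(\text{error}_m)$, the first factor being the mean number of partial quotients (a Porter--L\'evy type asymptotics adapted to the coprimality condition) and $\log_2(1+\tfrac 1{m(m+2)})$ the Gau{\ss}--Kuzmin probability of a digit equal to $m$. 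Summing the main term over $m\ge k$ via the telescoping identity $\sum_{m\ge k}\log_2(1+\tfrac 1{m(m+2)})=\log_2(1+\tfrac 1k)$ gives
\[
\frac{12\log 2}{\pi^2}\log N\cdot\log_2\!\Bigl(1+\frac 1k\Bigr)=\frac{12}{\pi^2}\log N\cdot\log\!\Bigl(1+\frac 1k\Bigr)=\frac{12}{\pi^2}\cdot\frac{\log N}{k}+O\!\Bigl(\frac{\log N}{k^2}\Bigr)=\frac{12}{\pi^2 t}+O\!\Bigl(\frac 1{t^2\log N}\Bigr),
\]
which is the asserted main term (and, for $t\le(\log N)^C$, the error here is absorbed into the one claimed).

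It remains to sum the error terms. The $O(1)$ in the mean number of partial quotients contributes $O\!\bigl(\log_2(1+\tfrac1k)\bigr)=O(1/(t\log N))$, so the whole game is to show $\sum_{m\ge k}|\text{error}_m|\ll(\log\log N)^3/k$. This is where one needs the Gau{\ss}--Kuzmin fit not just for fixed $m$ but \emph{uniformly}: a per-digit bound of the shape $|\text{error}_m|\ll\operatorname{poly}(\log\log N)/m^2$, valid for all $m$ up to $N^{o(1)}$ (equivalently, for all $t\le(\log N)^C$), would give a tail sum $\ll\operatorname{poly}(\log\log N)/k$, hence the claimed error. Establishing this uniform fit is the main obstacle.

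Concretely, I expect the heart of that estimate to be the following. Counting reduced fractions $a/N$ having a partial quotient $\ge m$ at a prescribed position reduces, after splitting the continued fraction there, to counting quadruples $(v,v',w,z)$ with $vw+v'z=N$, $\gcd(v,v')=\gcd(w,z)=1$ and $w\ge m z$; summing over the ``tail'' data $(w,z)$ turns this into counting pairs $(v,v')$ coprime to one another (equivalently, coprime to the primes dividing $N$) in a short interval, and the naive sieve for this carries an error of order $2^{\omega(N)}$, which need not be $O((\log N)^{O(1)})$ and can swamp the main term. The way around this is to sum over an auxiliary parameter before sieving, so that the relevant residues equidistribute over a long interval, and to control the resulting error by exponential sum / large sieve input — precisely the analytic engine behind Theorem \ref{Stheorem}. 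An essentially equivalent route passes through best rational approximations ($a_i\ge m$ iff $a/N$ lies within $1/(m q_{i-1}^2)$ of the convergent $p_{i-1}/q_{i-1}$) and uses the continued fraction reversal symmetry $\bar a/N=[0;a_r,\ldots,a_1]$, where $\bar a\equiv\pm a^{-1}\pmod N$, to exchange convergents of large denominator for convergents of denominator $\ll\sqrt N$; the same short-interval sieving is the bottleneck on that path as well.
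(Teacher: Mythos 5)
Your overall plan — bound $\mathds{1}[M(a/N)\ge k]$ by the number of digits $\ge k$, average, and telescope the Gau{\ss}--Kuzmin main term — agrees in spirit with the paper's proof, which passes through the identity $\{M(a/N)\ge t\log N\}=\{L_{[\lceil t\log N\rceil,N]}(a/N)\ge1\}$ and Markov's inequality. But there is a genuine gap in the error analysis. You reduce the problem to a per-digit bound $|\mathrm{error}_m|\ll\mathrm{poly}(\log\log N)/m^2$, to be summed over $m\ge k$, and you flag this as "the main obstacle." That obstacle is not merely technical: such a per-digit bound is not what the paper's machinery gives, and it is very likely out of reach by these methods. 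Specializing Theorem~\ref{Ltheorem} (or Theorem~\ref{expectedvaluetheorem}) to $b=c=m$ yields a per-digit error of $O((\log\log N)^3/m)$, not $O((\log\log N)^3/m^2)$; the extra $1/m$ loss comes from the term $f(\theta)/\theta=1/m$ in $B_{f,\eta,\theta}$, which is a boundary effect of the upper cutoff $\theta$ in the weight-function argument and cannot be removed when $\eta=\theta=m$. Summing $\mathrm{poly}(\log\log N)/m$ over $m\ge k$ diverges, so your route as stated does not close.

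What rescues the paper is that it never sums per-digit errors. Theorem~\ref{expectedvaluetheorem} is proved directly for the aggregate count $S_{f,\eta,\theta}$ with $\eta=b$ and $\theta$ ranging all the way up to a power of $\log N$: the weight function $w_{f,\eta,\theta}(b/k,\cdot)$ is supported near \emph{all} the digit values $m\in[\eta,\theta]$ at once, and the discrepancy/Koksma estimate is applied once to this single function. The resulting error involves $B_{f,\eta,\theta}\asymp1/b$ rather than a sum $\sum_{m\ge b}1/m$; equivalently, the boundary contributions at consecutive cutoffs telescope, but this cancellation is invisible if you first decompose into single digits and then add up per-digit bounds. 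So the right move is not a stronger per-digit fit but an application of the paper's mean-value result \eqref{Ltexpectedvalue} with $b=\lceil t\log N\rceil$ and $c=N$ directly: this gives $\frac{1}{\varphi(N)}\sum_a L_{[b,N]}(a/N)=\mu_{[b,N]}\log N+O((\log\log N)^3/b)$, and the telescoping identity $\mu_{[b,N]}=\frac{12}{\pi^2}\log(1+\frac1b)+O(1/N^2)$ produces $\frac{12}{\pi^2 t}+O(\frac{(\log\log N)^3}{t\log N})$ as claimed. Your closing remark about continued-fraction reversal and convergent denominators $\ll\sqrt N$ is indeed the paper's engine (Lemma~\ref{lemma_weightfunctions}), but it must be run on the interval $[b,N]$ as a block, not digit by digit.
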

\noindent This should be compared with a result of Hensley \cite[Theorem 1]{hensley}, who found the limit distribution of a rescaled version of $M(a/N)$ when an additional average over $N$ is taken. More precisely, again writing $F_Q$ for the set of Farey fractions of order $Q$, for all $t>0$ we have
$$
\lim_{Q \to \infty} \frac{1}{|F_Q|}  \left| \left\{ \frac{a}{N} \in F_Q \, : \, M\left(\frac{a}{N} \right) \geq t \log N \right\} \right| = 1 - e^{-\frac{12}{\pi^2 t}}.
$$
Note that
$$
1 - e^{-\frac{12}{\pi^2 t}} = \frac{12}{\pi^2 t} + O\left( \frac{1}{t^2} \right) \qquad \text{as $t \to \infty$}, 
$$
so for large $t$ the tail estimate for fixed $N$ provided by our Theorem \ref{Mtheorem} perfectly matches the tail behavior after taking an additional average over all denominators, even including the correct constant factor $12/\pi^2$. As in the discussion following Theorem \ref{Stheorem}, it remains unclear whether the additional average over all denominators is necessary to obtain a limit distribution; that is, whether there exists an analogue of Hensley's theorem which holds for fixed denominators, and whether such a result would be universal, or rather have to account for certain arithmetic properties of $N$. 

From Theorem \ref{Mtheorem} we immediately obtain the following corollary.
\begin{cor}
For all $N \geq 3$, there exists a reduced fraction $a/N$ for which
$$
M(a/N) \leq \frac{12}{\pi^2} \log N + O((\log \log N)^3).
$$
\end{cor}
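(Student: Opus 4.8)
The plan is to read off the corollary directly from Theorem \ref{Mtheorem} by picking the threshold $t$ only marginally above $12/\pi^2$, so that the density bound furnished by the theorem is strictly below $1$ and hence the complementary event --- a reduced fraction with small maximal partial quotient --- is non-empty.

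Concretely, fix $C = 1$ in Theorem \ref{Mtheorem} and let $K_1$ be the corresponding implied constant. For $N$ large, set $t = t_N := \frac{12}{\pi^2} + \frac{K(\log\log N)^3}{\log N}$ with an absolute constant $K$ to be chosen. Since $t_N \to 12/\pi^2$, for all sufficiently large $N$ one has $0 < t_N \le \log N = (\log N)^C$, so the theorem is applicable; moreover $t_N \asymp 1$, so its error term is $O\!\left(\frac{(\log\log N)^3}{\log N}\right)$. Expanding the main term,
$$
\frac{12}{\pi^2 t_N} = \left(1 + \frac{K\pi^2(\log\log N)^3}{12\log N}\right)^{-1} = 1 - \frac{K\pi^2}{12}\cdot\frac{(\log\log N)^3}{\log N} + O\!\left(\frac{(\log\log N)^6}{(\log N)^2}\right),
$$
so that Theorem \ref{Mtheorem} gives
$$
\frac{1}{\varphi(N)}\left|\left\{a \in \mathbb{Z}_N^* : M(a/N) \ge t_N\log N\right\}\right| \le 1 - \frac{K\pi^2}{12}\cdot\frac{(\log\log N)^3}{\log N} + O\!\left(\frac{(\log\log N)^3}{\log N}\right),
$$
where the final $O$-term has an absolute implied constant (depending only on $K_1$) and absorbs the $O((\log\log N)^6/(\log N)^2)$ above. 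Choosing $K$ larger than this absolute constant makes the right-hand side strictly less than $1$ once $N$ exceeds some absolute $N_1$.

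For $N \ge N_1$ it follows that some $a \in \mathbb{Z}_N^*$ satisfies $M(a/N) < t_N\log N = \frac{12}{\pi^2}\log N + K(\log\log N)^3$, which is the asserted inequality. The finitely many cases $3 \le N < N_1$ are trivial, as each such $N$ admits a reduced fraction and $(\log\log N)^3$ is a fixed positive number there, so the implied constant in $O((\log\log N)^3)$ can be enlarged to cover them. I do not expect any genuine obstacle: the only points needing a line of checking are that $t_N$ stays in the admissible range $(0,(\log N)^C]$ and that all error terms are of smaller order than the linear gain $1 - \frac{12}{\pi^2 t_N} \asymp \frac{(\log\log N)^3}{\log N}$, both of which are immediate. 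The entire substance of the corollary is contained in Theorem \ref{Mtheorem}.
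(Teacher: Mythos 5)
Your argument is correct and is exactly the intended reading of Theorem~\ref{Mtheorem}: choose the threshold $t$ just far enough above $12/\pi^2$ that the density bound drops strictly below $1$, and invoke pigeonhole; the paper omits the proof precisely because this step is immediate. One small bookkeeping remark: the Taylor-expansion error $O\big((\log\log N)^6/(\log N)^2\big)$ in fact carries an implied constant proportional to $K^2$, so the claim that the combined $O$-term depends ``only on $K_1$'' is not literally accurate and would make the choice of $K$ circular if taken at face value; the fix is simply that this quadratic remainder is $o\big((\log\log N)^3/\log N\big)$ for any fixed $K$, so one first picks $K$ with $\tfrac{K\pi^2}{12}>K_1+1$, then lets $N_1=N_1(K)$ be large enough to suppress the $K^2\epsilon^2$ term, after which the linear gain dominates and the argument closes as you describe.
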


To the best of our knowledge, the fact that for all $N$ there exists a reduced fraction $a/N$ with $M(a/N) \ll \log N$ was first proved by Korobov \cite{koro} for prime $N$ and by Zaremba \cite{zaremba2} for composite $N$. In \cite{cusick} it was shown that for all $N$ there exists a reduced fraction $a/N$ with $M(a/N) \leq 3 \log N$, which appears to have been the strongest known existence result so far (note for comparison that $12/\pi^2 \approx 1.22$). Zaremba's famous conjecture asserts that actually for all $N$ there is a reduced fraction $a/N$ with $M(a/N) \leq K$, where $K$ is a suitable absolute constant (it is conjectured that $K=5$ is admissible). Zaremba's conjecture was established when $N$ is of very special type; most importantly, Niederreiter \cite{nieder} proved the conjecture in the case when $N$ is a power of a small prime. The conjecture has been confirmed by Bourgain and Kontorovich \cite{bk} for a set of denominators having full asymptotic density; see also \cite{fk,huang}. In the recent preprint \cite{mms_preprint}, the estimate $M(a/N) \ll \log N$ was improved to $M(a/N) \ll \log N / \log \log N$ for a wide range of denominators $N$. However, as a statement concerning \emph{all} possible denominators $N$, Zaremba's conjecture remains wide open. 

Given integers $1 \le b \le c$, let
$$
L_{[b,c]} \left( \frac{a}{N} \right) = \sum_{i=1}^r \mathds{1}_{\{ b \le a_i \le c \}}
$$
denote the number of partial quotients of $a/N$ which fall in the interval $[b,c]$. Since all partial quotients of $a/N$ are at most $N$, choosing e.g.\ $c=N$ corresponds to counting partial quotients in the interval $[b,\infty)$, that is, $L_{[b,N]}(a/N)=\sum_{i=1}^r \mathds{1}_{\{ a_i \ge b \}}$. In our next result we find the precise asymptotics of the average value, and an upper bound for the variance of $L_{[b,c]}(a/N)$. In particular, for large values of $b$, $L_{[b,c]}(a/N)$ concentrates around its mean for the majority of reduced fractions $a/N$.
\begin{thm}\label{Ltheorem} Let $C>0$ be arbitrary. For all $N \geq 3$ and $1 \le b \le c$ with $b \le (\log N)^C$, we have
\begin{equation}\label{Ltexpectedvalue}
\frac{1}{\varphi (N)} \sum_{a \in \mathbb{Z}_N^*} L_{[b,c]} \left( \frac{a}{N} \right) = \mu_{[b,c]} \log N +O \left( \frac{(\log \log N)^3}{b} \right) ,
\end{equation}
and
\begin{equation} \label{Ltvariance}
\frac{1}{\varphi (N)} \sum_{a \in \mathbb{Z}_N^*} \left( L_{[b,c]} \left( \frac{a}{N} \right) - \mu_{[b,c]} \log N \right)^2 \ll \frac{(\log N)^2}{b^4} + \frac{\log N \log \log N}{b}
\end{equation}
with $\mu_{[b,c]}=\frac{12}{\pi^2} \sum_{m=b}^c \log \left( 1+\frac{1}{m(m+2)} \right)$ and implied constants depending only on $C$.
\end{thm}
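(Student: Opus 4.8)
The plan is to reduce both assertions to lattice‑point counts obtained by cutting continued fraction expansions at one prescribed position (for the mean) resp.\ at two positions (for the variance), and then to evaluate those counts by elementary sieving. For the mean I would encode each pair $(a,i)$ with $a\in\mathbb Z_N^*$ and $b\le a_i\le c$ by the two consecutive Euclidean remainders $(v,w):=(r_{i-1},r_i)$ of $N$ and $a$ together with the initial block $(a_1,\dots,a_{i-1})$. Up to the twofold ambiguity of a finite continued fraction, the block is the same datum as a coprime pair $(s,t)$ with $0\le t<s$ and $s$ the continuant $K(a_1,\dots,a_{i-1})$; via the matrix identity for continuants, the condition that $a/N$ really has denominator $N$ collapses to the single equation $sv+tw=N$, while $a_i=\lfloor v/w\rfloor$. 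This gives
\[
\sum_{a\in\mathbb Z_N^*}L_{[b,c]}\!\left(\frac aN\right)=2\,\#\!\left\{(v,w,s,t):\, sv+tw=N,\ 0<w<v,\ 1\le t<s,\ \gcd(v,w)=\gcd(s,t)=1,\ \lfloor v/w\rfloor\in[b,c]\right\}+O\!\left(\frac{\varphi(N)}{b}\right),
\]
the error absorbing the degenerate blocks ($v=N$, or $(s,t)\in\{(1,0),(1,1)\}$). The same device with two cut points $i<j$ expresses $\sum_a L_{[b,c]}(a/N)^2$ — after peeling off the diagonal $i=j$, which returns $\sum_a L_{[b,c]}(a/N)$ — through an analogous five‑fold count tied together by one extra Diophantine relation.

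To evaluate the quadruple count I would put $v=kw+v'$ with $k=\lfloor v/w\rfloor\in[b,c]$ and $0\le v'<w$, so that $sv+tw=N$ becomes $kws+(wt+sv')=N$, and split according to $d:=\gcd(w,s)$, which is forced to divide $N$. For fixed $(w,s)$ the Chinese Remainder Theorem determines $t$ and $v'$, and the conditions $\gcd(v,w)=1$, $\gcd(s,t)=1$ turn into $\gcd(w,N)=1$, $\gcd(s,N)=1$. Summing over $v$ in the length‑$w$ window contributes the weight $\sum_{v\in[kw,(k+1)w)}\frac1{v(v+w)}=\frac1w\log\!\big(1+\frac1{k(k+2)}\big)$ up to a small error — precisely the summand defining $\mu_{[b,c]}$ — while the ``free'' remainder $w$ runs over $[1,N/k)$ and contributes $\sum_{w<N/k,\,\gcd(w,N)=1}\frac{\varphi(w)}{w^2}=\frac{\log(N/k)}{\zeta(2)}\prod_{p\mid N}\frac p{p+1}+O(\log\log N)$. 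A Möbius sum over $d\mid N$ then reassembles the local factors so that $\prod_{p\mid N}\frac p{p+1}$ cancels, and together with the prefactor $2$ and $1/\zeta(2)=6/\pi^2$ this produces $\varphi(N)\mu_{[b,c]}\log N$. (As a check, for $[b,c]=[1,N]$ the left side is $\sum_a r$ and $\mu_{[1,N]}\to\frac{12}{\pi^2}\log2$, the Panov--Liehl constant.) The discrepancy between $\log(N/k)$ and $\log N$ costs $\sum_{k\ge b}(\log k)\log\!\big(1+\frac1{k(k+2)}\big)=O(\log b/b)=O(\log\log N/b)$ by the hypothesis $b\le(\log N)^C$, which is exactly where that hypothesis enters; discrepancies of coprimality in short intervals and progressions, the error per admissible $d\mid N$, and rounding errors together give the claimed $O((\log\log N)^3/b)$.

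Keeping all implied constants dependent only on $C$ is the delicate part: the crude $\sum_{w\le x}2^{\omega(w)}$‑type losses in the lattice counts must be replaced by Dirichlet hyperbola arguments, so that counting $(v,w,s,t)$ with $sv+tw=N$ below a given height has a \emph{power‑saving} error in $N$, which absorbs the factor $2^{\omega(N)}$ generated by the Möbius reassembly. For the variance I would expand $\sum_a(L_{[b,c]}(a/N)-\mu_{[b,c]}\log N)^2$ and treat the two‑cut count with the same machinery: for $|i-j|$ large the five‑fold count \emph{factorizes} into a product of two one‑cut counts, so its main term is $(\mu_{[b,c]}\log N)^2$, which cancels against the cross term and the square; the leftover is $\sum_a L_{[b,c]}(a/N)$, responsible for the term $\frac{\log N\log\log N}{b}$, plus residual errors. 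The nearby positions ($j=i+1$, or $|i-j|$ bounded), where the two occurrences do not decouple, must be estimated by hand via the joint density of a bounded block of consecutive partial quotients all lying in $[b,c]$; this contributes $O(\log N/b^2)$ and is also within the bound. Finally the errors coming from the imprecision $\frac1{\varphi(N)}\sum_a L_{[b,c]}-\mu_{[b,c]}\log N$ fed into the square, and from the factorization of the far‑range count, are $O(\log N(\log\log N)^3/b^2)$, which one checks is always $\ll\frac{(\log N)^2}{b^4}+\frac{\log N\log\log N}{b}$ — the term $(\log N)^2/b^4$ being merely a clean upper bound covering the small‑$b$ range.

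I expect the genuine obstacles to be twofold. First, carrying out the divisor‑sum reassembly above while keeping every error term free of $N$‑dependence (beyond $N^{o(1)}$, which power‑saving absorbs) is intricate, because the same sums appear in the mean and in the factorized two‑cut count and the cancellations have to be exact to the stated precision; this is where the careful choice of error terms — and ultimately the factor $(\log\log N)^3$ and the shape of the variance bound — comes from. Second, the congruence matching in $sv+tw=N$ naively loses a factor $2$ (a ``half‑the‑time'' phenomenon in deciding which of two residue representatives actually occurs); I would circumvent this by working in the dual parametrization, summing over $(v,w)$ and counting the admissible $s$, where the count is exact and the correct factor reappears.
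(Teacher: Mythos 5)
Your approach is genuinely different from the paper's: you propose a direct Diophantine lattice-point count in the spirit of Ustinov (encode the cut at position $i$ via the bilinear identity $sv+tw=N$, and sieve), whereas the paper never counts lattice points at all. Instead it encodes the event $a_i=m$ as membership in short intervals $I(b/k,m)\cup I'(b/k,m)$ anchored at convergents $b/k$ with $k<\sqrt N$, and estimates $\frac1{\varphi(N)}\sum_a w_{f,\eta,\theta}(b/k,a/N)$ via a \emph{discrepancy bound} for $\{a/N: a\in\mathbb Z_N^*\}$ obtained from the fundamental lemma of sieve theory together with a localized Koksma inequality. The paper itself remarks that Ustinov's version of the first-moment asymptotic has a much better $N$-dependence but an unspecified dependence on $b,c$; the entire point of the sieve-discrepancy route is to make the error uniformly $O((\log\log N)^3/b)$ across $b\le(\log N)^C$, and your sketch does not show how the hyperbola-method and coprimality errors assemble to produce precisely that shape. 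That part of your mean argument is plausible but undemonstrated, and the subtleties you flag in your last paragraph are real.

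The genuine gap is in the variance. You assert that for $|i-j|$ large the five-fold two-cut count ``factorizes into a product of two one-cut counts'' with main term $(\mu_{[b,c]}\log N)^2$. In the lattice-point parametrization the two cuts are linked by a \emph{single} chain of continuants and two coupled bilinear constraints; there is no a priori product structure, and asymptotic quasi-independence of distant partial quotients is exactly the hard statement one would need to prove, with error terms good enough to survive the cancellation against $(\mu_{[b,c]}\log N)^2$. The paper does something structurally different and more robust: it bounds the off-diagonal sum $\sum_{j<i}\mathds 1_{\{\eta\le a_j\le\theta\}}f(a_j)$ pointwise by $S_{f,\eta,\theta}(p_{i-1}/q_{i-1})+O(1)$, rewrites the result as $\sum_k\sum_{b\in\mathbb Z_k^*}w_{f,\eta,\theta}(b/k,a/N)\,S_{f,\eta,\theta}(b/k)$, and then applies the first-moment theorem \emph{recursively at the fixed denominator} $k$; and it splits the range of $k$ at $\sqrt N/R$ and at $\sqrt N/e^{c(T)}$ with carefully tuned $R$ and $T$ to control the regime where $k$ is close to $\sqrt N$, which is where the discrepancy bound degenerates. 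Your sketch does not address the near-$\sqrt N$ regime at all, and treating it as part of a clean ``far-range factorization'' would be incorrect. Your ``bounded gap'' contribution of $O(\log N/b^2)$ is also asserted rather than derived; and the claim that $(\log N)^2/b^4$ is ``merely a clean upper bound for small $b$'' is at odds with the paper, where the $D_{f,\eta,\theta}(\log N)^2\asymp(\log N)^2/b^4$ term is a genuine contribution arising from approximating $\int_0^1 w_{f,\eta,\theta}(b/k,\cdot)$ by $\pi^2A_{f,\eta,\theta}/(6k^2)$. In short, the mean may be recoverable along your route with substantial work on error management, but the variance argument as sketched would not go through.
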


By a result of Baladi and Vall\'{e}e \cite[Theorem 3(a)]{bave} (see also Bettin and Drappeau \cite[Corollary 3.2]{bd2}), under an extra averaging over the denominators $N$ we once again have a limit distribution: for any fixed $1 \le b \le c$,
\[ \frac{L_{[b,c]}(a/N) - \mu_{[b,c]} \log N}{\sigma_{[b,c]} \sqrt{\log N}} \]
with a suitable $\sigma_{[b,c]}>0$ converges to the standard normal distribution. It remains open whether a similar central limit theorem holds for reduced fractions with a fixed denominator. Avdeeva and Bykovski\u{\i} \cite{a_byk} applied the results of Baladi and Vall\'{e}e to establish (as a special case) concentration of  $L_{[b,c]}$ around its mean, but it is not clear if their method allows the computation of first and second moments as in our theorem.

Note that Theorem \ref{Ltheorem} is precise enough to yield a sensible result even in the case when $b=c=m$, i.e.\ when we are counting the number of occurrences of a particular value $m$ for the partial quotients. In this case, the theorem gives
$$
\frac{1}{\varphi (N)} \sum_{a \in \mathbb{Z}_N^*} \sum_{i=1}^r \mathds{1}_{\{a_i = m\}} = \frac{12}{\pi^2} \log \left( 1+\frac{1}{m(m+2)}  \right) \log N +O \left( \frac{(\log \log N)^3}{m} \right),
$$
and the main term (which is of order $m^{-2} \log N$) dominates the error as long as $m$ is moderate in comparison with $\log N$. It is natural to divide the equation by a factor of $\frac{12 \log 2}{\pi^2} \log N$, which is the asymptotic order of the average length of the continued fraction of a reduced fraction with denominator $N$ (see e.g.\ \cite{porter}), which leads to
$$
\frac{1}{\varphi (N)} \frac{\pi^2}{12 \log 2 \log N} \sum_{a \in \mathbb{Z}_N^*} \sum_{i=1}^r \mathds{1}_{\{a_i = m\}} = \frac{\log \left( 1+\frac{1}{m(m+2)}  \right)}{\log 2} +O \left( \frac{(\log \log N)^3}{m \log N} \right).
$$
The main term on the right-hand side of this equation is the probability of the number $m$ in the Gau{\ss}--Kuzmin distribution, which is the probability distribution governing the distribution of the partial quotients of random reals. Thus, informally speaking, Theorem \ref{Ltheorem} asserts that the distribution of the partial quotients of reduced fractions with fixed denominator is in very good accordance with the distribution coming from the corresponding random model. Versions of the ``expected value'' part of Theorem \ref{Ltheorem}, i.e.\ equation \eqref{Ltexpectedvalue}, have been established by other authors before. In particular, Ustinov \cite{ustinov} obtained a version of \eqref{Ltexpectedvalue} with an error term which is much smaller than ours in terms of $N$, but depends on $b$ and $c$ in an unspecified way. David and Shapira \cite{david_shapira} proved a qualitative version of \eqref{Ltexpectedvalue}, without giving an error term, using methods from ergodic theory. 

Our final result is a tail estimate for Dedekind sums, which are defined for $a \in \mathbb{Z}_N^*$ as
\[ D \left( \frac{a}{N} \right) = \sum_{b=1}^{N-1} \left( \frac{b}{N} - \frac{1}{2} \right) \left( \left\{ \frac{ba}{N} \right\} - \frac{1}{2} \right), \]
where $\{ \cdot \}$ denotes the fractional part function. Dedekind sums are a classical object in analytic number theory, and appear most importantly in the modularity relation of the Dedekind eta function; see for example \cite{rad_gro} for more background. An identity due to Barkan \cite{Barkan} and independently Hickerson \cite{hickerson} relates $D(a/N)$ to the partial quotients via
\begin{equation}\label{hickerson}
\begin{split} D \left( \frac{a}{N} \right) &= \frac{(-1)^r -1}{8} + \frac{1}{12}\left( \frac{a}{N} - (-1)^r[0;a_r,\ldots,a_2,a_1] - S_{\mathrm{alt}} \left( \frac{a}{N} \right) \right)
\\ &= -\frac{1}{12}S_{\mathrm{alt}} \left( \frac{a}{N} \right) + O(1), \end{split}
\end{equation}
where $S_{\mathrm{alt}}(a/N) = \sum_{i=1}^r (-1)^i a_i$ is the alternating sum of partial quotients.

\begin{thm}\label{Dtheorem} Let $C>0$ be arbitrary. For all $N \geq 3$ and $0<t \le (\log N)^C$, we have
\[ \frac{1}{\varphi(N)} \left| \left\{ a \in \mathbb{Z}_N^* \, : \, \left| D \left( \frac{a}{N} \right) \right| \geq t \log N \right\} \right| \ll \frac{1}{t} \]
with an implied constant depending only on $C$.
\end{thm}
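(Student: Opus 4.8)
\emph{Proof proposal.} The strategy is to transfer the problem to the truncated alternating sum of partial quotients and then run a second-moment argument in the spirit of Theorem~\ref{Stheorem}. By the Barkan--Hickerson identity \eqref{hickerson}, $|D(a/N)| \geq t \log N$ implies $|S_{\mathrm{alt}}(a/N)| \geq 12 t \log N - c_0$ for an absolute constant $c_0$. Since the asserted bound is trivial once $t$ is smaller than a constant, we may assume $t$ is large, so that $|S_{\mathrm{alt}}(a/N)| \geq 6 t \log N$, and it suffices to bound the proportion of $a \in \mathbb{Z}_N^*$ with $|S_{\mathrm{alt}}(a/N)| \geq 6 t \log N$. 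I would then truncate at level $B := \lceil t \log N \rceil \le (\log N)^{C+1}$. By Theorem~\ref{Mtheorem} a proportion $\ll 1/t$ of all $a$ satisfy $M(a/N) \geq t \log N$; for every remaining $a$ all partial quotients are smaller than $B$, so $S_{\mathrm{alt}}(a/N) = \Sigma(a/N)$, where $\Sigma(a/N) := \sum_{i \,:\, a_i \leq B} (-1)^i a_i$. It therefore remains to show that the proportion of $a \in \mathbb{Z}_N^*$ with $|\Sigma(a/N)| \geq 6 t \log N$ is $\ll 1/t$.

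For this I would establish two facts: $\frac{1}{\varphi(N)} \sum_{a \in \mathbb{Z}_N^*} \Sigma(a/N) = O(1)$, and a variance bound $\frac{1}{\varphi(N)} \sum_{a \in \mathbb{Z}_N^*} \bigl( \Sigma(a/N) - \frac{1}{\varphi(N)} \sum_{b} \Sigma(b/N) \bigr)^{2} \ll B \log N$ with an \emph{absolute} implied constant. Granting these, Chebyshev's inequality shows that the proportion of $a$ with $|\Sigma(a/N)| \geq 6 t \log N$ is $\ll \frac{B \log N}{(t \log N)^{2}} = \frac{1}{t}$ (using $B \asymp t \log N$), which together with the exceptional set from the truncation step completes the proof; all implied constants depend only on $C$.

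The mean estimate is where the alternating structure is decisive: for the non-alternating restricted sum $\sum_{i \,:\, a_i \leq B} a_i$ the average is of order $\log N \log B$ (which forces the large centring term in Theorem~\ref{Stheorem}), whereas the signs make it collapse to $O(1)$, reflecting the fact that Dedekind sums are genuinely centred at $0$. I would prove $\frac{1}{\varphi(N)} \sum_a \Sigma(a/N) = O(1)$ via the reflection $a \mapsto N-a$: using $1 - [0;a_1,a_2,\ldots,a_r] = [0;1,a_1-1,a_2,\ldots,a_r]$ when $a_1 \geq 2$ (and the analogous merging identity when $a_1 = 1$), this map reverses the parity of the continued fraction length and satisfies $\Sigma(a/N) + \Sigma((N-a)/N) = -2 + E(a)$, where $E(a)$ is supported on those $a$ whose first (resp.\ second) partial quotient lies near the threshold $B$ and satisfies $\frac{1}{\varphi(N)} \sum_a |E(a)| \ll 1$. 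Splitting $\sum_a \Sigma(a/N)$ according to the parity of the continued fraction length and re-indexing the odd part via $a \mapsto N-a$ then collapses the whole sum to $\sum_{a \,:\, \text{length even}} (-2 + E(a))$, which is $O(\varphi(N))$.

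The main obstacle is the variance bound, with an absolute constant and uniform in $B$ up to a power of $\log N$. This is exactly the fixed-denominator second-moment analysis underlying Theorem~\ref{Stheorem}, carried out with the weight $(-1)^i a_i \mathds{1}_{\{a_i \le B\}}$ attached to the $i$th partial quotient in place of $a_i \mathds{1}_{\{a_i \le B\}}$: expanding $\frac{1}{\varphi(N)} \sum_a \Sigma(a/N)^2$ over pairs of digit positions, the diagonal part $\sum_i \frac{1}{\varphi(N)} \sum_a a_i^2 \mathds{1}_{\{a_i \le B\}}$ has the correct order $\asymp B \log N$ (it is dominated by partial quotients close to $B$), and the off-diagonal terms $\sum_{i \neq j} (-1)^{i+j} \frac{1}{\varphi(N)} \sum_a a_i a_j \mathds{1}_{\{a_i, a_j \le B\}}$ are controlled in absolute value by the decay of correlations between distant partial quotients of $a/N$, so the alternating signs cost nothing. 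Carrying this out rigorously requires sufficiently sharp counts for the number of $a \in \mathbb{Z}_N^*$ whose continued fraction begins with a prescribed block of partial quotients — the same transfer-operator and Kloosterman-sum input as for Theorem~\ref{Stheorem} — and, crucially, controlling the error terms well enough that no spurious $\log \log N$ factor enters the variance, since such a factor would spoil the bound for $t$ of size up to $\log \log N$.
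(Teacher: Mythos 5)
Your high-level reduction is correct and matches the paper's: use the Barkan--Hickerson formula \eqref{hickerson} to pass from $D$ to $S_{\mathrm{alt}}$, then prove a second-moment tail bound. Your mean argument via $a \mapsto N-a$ (equivalently, the identity $1-[0;a_1,\ldots,a_r]=[0;1,a_1-1,a_2,\ldots,a_r]$, which flips the parity of the length and makes the alternating sum almost reflect to its negative) is a sound observation and is in fact the same symmetry the paper exploits when it shows $\sum_a S'_{\textup{e}}(a/N)=\sum_a S'_{\textup{o}}(a/N)$ in the proof of Lemma \ref{var_ded}.

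The genuine gap is the variance bound, and you have flagged exactly the danger yourself but have not resolved it. You need, with a \emph{single} truncation at $B=\lceil t\log N\rceil$, the estimate
\[
\frac{1}{\varphi(N)}\sum_{a\in\mathbb{Z}_N^*}\Bigl(\Sigma(a/N)-\tfrac{1}{\varphi(N)}\textstyle\sum_b\Sigma(b/N)\Bigr)^2 \ll B\log N
\]
with an absolute (or only $C$-dependent) constant. What the paper's machinery delivers (Lemma \ref{var_ded} with $\eta=1$, $\theta=B$, or Theorem \ref{variancetheorem} for the non-alternating analogue) is
\[
\ll \log B\,(\log N)^2 + B\log N + (\log B)^2(\log N)(\log\log N)^3,
\]
and the leading error term $\log B\,(\log N)^2\asymp(\log\log N)(\log N)^2$ comes from $D'_{f,\eta,\theta}(\log N)^2$, which in turn is the unavoidable Koksma-type error $O\bigl(\tfrac{1}{k^2}\sum_{m=\eta}^\theta f(m)/m^3\bigr)$ in approximating $\int_0^1 w_{\textup{e},\eta,\theta}(b/k,x)\,dx$ by the one-sided weight function's leading term. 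For $100\le t\lesssim\log\log N$, Chebyshev with the above variance gives only $\ll(\log\log N)/t^2$, which is \emph{not} $\ll 1/t$. So the single-truncation Chebyshev argument does not close, and your wish that ``no spurious $\log\log N$ factor enters the variance'' is not something the paper's discrepancy/sieve machinery achieves. It is also not the machinery you name: there are no transfer operators or Kloosterman sums here; the key inputs are the fundamental lemma of sieve theory (Lemma \ref{lemma_fund}) and the variant of Koksma's inequality (Lemma \ref{lemma_koksma}).

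The paper circumvents this precisely by a dyadic decomposition, the same one used for Theorem \ref{Stheorem}. Taking $\eta=2^{u-1}$, $\theta=2^u-1$ for $1\le u\le U=\lfloor\log_2(t\log N)\rfloor$ makes the $D'$-type term scale like $(\log N)^2/2^u$, so Lemma \ref{var_ded} gives the estimate \eqref{dyadic_ded}:
\[
\frac{1}{\varphi(N)}\sum_{a\in\mathbb{Z}_N^*}S_{\mathrm{alt},2^{u-1},2^u-1}\bigl(a/N\bigr)^2 \ll 2^u\log N + \frac{(\log N)^2}{2^u},
\]
and Chebyshev applied blockwise with thresholds $\tfrac{t\log N}{10(\min\{u,U-u\})^2}$, followed by a union bound, yields $\ll 1/t$ after geometric summation in $u$. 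The dyadic partition is not a cosmetic refinement of your single-truncation argument: it is what makes the error terms summable. As written, your proof has a hole at the variance step that your proposed program does not fill.
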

Vardi \cite{vardi2} showed that under an additional averaging over the denominators, the normalized Dedekind sum $2 \pi D(a/N)/ \log N$ converges to the Cauchy distribution. Note that our tail estimate $1/t$ in Theorem \ref{Dtheorem} perfectly matches the order of the tail probabilities of the Cauchy distribution. Once again, it is not clear whether the extra averaging over denominators is necessary for the existence of a limit law. 

We add some further references to results which are related to those in the present paper. Rukavishnikova \cite{ruka_1} proved a second moment bound for the average sum of partial quotients when $N$ is fixed and $a \in \mathbb{Z}_N$ (i.e., without a coprimality assumption as in our theorems). In the same setup, Bykovski\u{\i} \cite{byk} proved a second moment bound for the length (which corresponds to choosing $b=1$ and $c=N$ in our Theorem \ref{Ltheorem}) of the continued fraction expansion. The first moment of the length of the continued fraction, in the coprime setup, was calculated with high precision in \cite{by_fro}. Several authors studied problems related to those in the present paper when the average is taken over a (sufficiently large) multiplicative subgroup of $\mathbb{Z}_N^*$, see for example \cite{chang, mu}. 

We now give an outline of the remaining parts of this paper. The key tools in this paper come from sieve theory and uniform distribution (resp.\ discrepancy) theory, of course in combination with results from Diophantine approximation and the theory of continued fractions. In Section \ref{sec_discrep}, we estimate the discrepancy of the set of reduced fractions $\{ a/N \, : \, a \in \mathbb{Z}_N^* \}$ using sieve theory. In Section \ref{sec_weight}, we introduce the main auxiliary object of the paper, the restricted sum $\sum_{i=1}^r \mathds{1}_{\{ \eta \le a_i \le \theta \}} f(a_i)$ with cutoff parameters $\eta$ and $\theta$ and a non-negative, non-decreasing function $f$. We show how the first and second moment of the restricted sum can be approximated by sums of certain weight functions. In Section \ref{sec_expect}, we find the average of such a restricted sum up to a small error, and in Section \ref{sec_variance} we give an upper bound for the variance of the restricted sum. In Section \ref{sec_mainproofs}, we deduce Theorems \ref{Stheorem}, \ref{Mtheorem} and \ref{Ltheorem} from our general results on restricted sums, by specializing to $f(x)=x$ and to $f(x)=1$, respectively. As the overview of the proof strategy indicates, our methods in fact apply to very general sums of the form $\sum_{i=1}^r f(a_i)$, yielding sharp tail estimates or concentration inequalities depending on the growth rate of $f$. Finally, in Section \ref{sec_dedekind} we explain how to modify our approach for alternating sums $\sum_{i=1}^r (-1)^i f(a_i)$, and prove Theorem \ref{Dtheorem} on Dedekind sums.

Before turning to the proofs, we note that our arguments crucially exploit the existence of a certain bijective mapping $\mathbb{Z}_N^* \to \mathbb{Z}_N^*$ which is very similar to the map sending $a \in \mathbb{Z}_N^*$ to its modular inverse, and the fact that the normalized counting measure on $\mathbb{Z}_N^*$ is invariant under this mapping (cf.\ the discussion before the statement of Lemma \ref{lemma_weightfunctions} below). For this reason, our method does not allow us to replace the uniform average $\frac{1}{\varphi(N)} \sum_{a \in \mathbb{Z}_N^*}$ by some weighted average, say for example with respect to the normalized counting measure on $\{a \in \mathbb{Z}_N^*,~a \leq \eta N \}$ with some fixed $\eta \in (0,1)$. It seems that proving an analogue of our theorems for such weighted averages (following the methods used in this paper) would need significant additional input on the distribution of modular inverses in short intervals, or estimates for Kloosterman sums in short intervals.

\section{A discrepancy estimate for the set of reduced fractions}\label{sec_discrep}

Since we work with reduced fractions, sieve methods naturally play an important role. For the convenience of the reader we recall the fundamental lemma of sieve theory in the formulation of \cite[Theorem 18.11]{kouk}.
\begin{lemma}[Fundamental lemma of sieve theory]\label{lemma_fund}
Let $(a_n)_{n \geq 1}$ be non-negative reals, such that $\sum_{n=1}^\infty a_n < \infty$. Let $\mathcal{P}$ be a finite set of primes, and write $P = \prod_{p \in \mathcal{P}} p$. Set $y = \max \mathcal{P}$, and $A_d = \sum_{n \equiv 0 \mod d} a_n$. Assume that there exist a multiplicative function $g$ such that $0 \le g(p) < p$ for all $p \in \mathcal{P}$, a real number $x$, and positive reals $c_1,c_2$ such that
$$
A_d =: x \frac{g(d)}{d} + r_d, \qquad d \mid P,
$$
and
$$
\prod_{p \in (y_1, y_2] \cap \mathcal{P}} \left( 1 - \frac{g(p)}{p} \right)^{-1} < \left( \frac{\log y_2}{\log y_1} \right)^{c_1} \left(1 + \frac{c_2}{\log y_1} \right), \qquad 3/2 \leq y_1 \leq y_2 \leq y.
$$
For any real $u \ge 1$,
$$
\sum_{\mathrm{gcd} (n,P)=1} a_n = \left( 1 + O ( u^{-u/2} ) \right) x \prod_{p \in \mathcal{P}} \left(1  -\frac{g(p)}{p} \right) + O \left( \sum_{d \leq y^u,~d \mid P} |r_d| \right)
$$
with implied constants depending only on $c_1,c_2$.
\end{lemma}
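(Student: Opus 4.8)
The statement is the classical \emph{fundamental lemma of the combinatorial sieve}, and in the paper itself it is simply quoted from \cite{kouk}; what follows is a sketch of how one would prove it. The plan is to sandwich the indicator function $\mathbf{1}_{\gcd(n,P)=1}$ between two truncations of the Legendre--M\"obius expansion. First I would fix the level $D=y^u$ and construct \emph{combinatorial sieve weights}: sequences $(\lambda_d^{+})_{d\mid P}$ and $(\lambda_d^{-})_{d\mid P}$ supported on $d\le D$, with $\lambda_1^{\pm}=1$, $|\lambda_d^{\pm}|\le 1$, and
\[
\sum_{d\mid m}\lambda_d^{-}\ \le\ \sum_{d\mid \gcd(m,P)}\mu(d)\ =\ \mathbf{1}_{\gcd(m,P)=1}\ \le\ \sum_{d\mid m}\lambda_d^{+}\qquad(m\ge 1).
\]
The standard choices are Brun's truncated M\"obius function (retain $\mu(d)$ only when $d$ has fewer than an even, resp.\ odd, number of prime factors) or, to obtain the sharp $u^{-u/2}$ error, the Rosser--Iwaniec weights, whose support is cut out by a condition on the product of the \emph{largest} prime factors of $d$ rather than on their count. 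Granting such weights and summing against $(a_n)$, the hypothesis $A_d=x\,g(d)/d+r_d$ gives
\[
\sum_{d\mid P}\lambda_d^{-}A_d\ \le\ \sum_{\gcd(n,P)=1}a_n\ \le\ \sum_{d\mid P}\lambda_d^{+}A_d,\qquad \sum_{d\mid P}\lambda_d^{\pm}A_d=x\sum_{d\mid P}\lambda_d^{\pm}\frac{g(d)}{d}\ +\ \sum_{d\mid P}\lambda_d^{\pm}r_d.
\]

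The remainder is then immediate: since $|\lambda_d^{\pm}|\le 1$ and the weights vanish for $d>y^u$, one has $\bigl|\sum_{d\mid P}\lambda_d^{\pm}r_d\bigr|\le\sum_{d\le y^u,\,d\mid P}|r_d|$, which is exactly the error allowed in the statement. Everything therefore reduces to the \emph{main term estimate}
\[
\sum_{\substack{d\mid P\\ d\le y^u}}\lambda_d^{\pm}\,\frac{g(d)}{d}\ =\ \bigl(1+O(u^{-u/2})\bigr)\,V(P),\qquad V(P):=\prod_{p\in\mathcal P}\Bigl(1-\frac{g(p)}{p}\Bigr),
\]
with the implied constant depending only on $c_1,c_2$. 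I expect this to be the main obstacle; it is the only place the product (dimension) hypothesis on $g$ is used.

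To prove the main term estimate I would start from the exact identity $\sum_{d\mid P}\mu(d)g(d)/d=V(P)$ of the Eratosthenes--Legendre sieve and bound the tail discarded by the truncation. The natural device is Buchstab's identity applied iteratively, so that the contribution of divisors with $j+1$ ``large'' prime factors beyond the cut is expressed through that with $j$ such factors, each step picking up a factor of the shape $\sum_{p\in(w_1,w_2]\cap\mathcal P}g(p)/p$. Writing $V(z):=\prod_{p\in\mathcal P,\,p\le z}(1-g(p)/p)$, the product hypothesis lets one bound $V(y_1)/V(y_2)$, and hence (via $-\log(1-t)\ge t$) the partial sums $\sum_{y_1<p\le y_2}g(p)/p$, by $O\bigl(\log(\log y_2/\log y_1)\bigr)$ — this is precisely the assertion that the sieve has finite dimension $\asymp c_1$. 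Summing these bounds over the ``chambers'' indexed by the decreasing prime factors of $d$, passing from the level-$j$ to the level-$(j{+}1)$ truncation shrinks the error by a factor of order $1/j$; since a divisor $d\le y^u$ surviving far enough into the expansion must have discarded $\gtrsim u$ of its prime factors, iterating about $u$ times yields a cumulative error $\ll\prod_{j\lesssim u}(c/j)\asymp(c/u)^{u}\le u^{-u/2}$ for $u$ large (and the bound is trivial for bounded $u$, since $u^{-u/2}\gg 1$). The lower weights $\lambda^-$ are handled identically.

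Finally I would combine the two inequalities: both the upper and the lower main terms equal $x\bigl(1+O(u^{-u/2})\bigr)V(P)$, and both remainders are bounded by $\sum_{d\le y^u,\,d\mid P}|r_d|$, so
\[
\sum_{\gcd(n,P)=1}a_n=\bigl(1+O(u^{-u/2})\bigr)\,x\,V(P)+O\Bigl(\sum_{d\le y^u,\,d\mid P}|r_d|\Bigr),
\]
as claimed. The only genuinely delicate point is the combinatorial bookkeeping that converts the dimension bound into the $u^{-u/2}$ savings uniformly in $c_1,c_2$; everything else is formal manipulation of multiplicative functions.
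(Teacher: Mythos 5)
The paper does not prove this lemma; it cites it verbatim as Theorem 18.11 of Koukoulopoulos's book and uses it as a black box, so there is no paper proof to measure your argument against. Your sketch correctly identifies the skeleton of the standard combinatorial-sieve proof: sandwich $\mathbf 1_{\gcd(n,P)=1}$ between truncated M\"obius weights $\lambda^\pm_d$ supported on $d\mid P$, $d\le y^u$, with $|\lambda^\pm_d|\le 1$; substitute $A_d = x\,g(d)/d + r_d$; bound the remainder term trivially by $\sum_{d\le y^u,\,d\mid P}|r_d|$; and reduce everything to the single main-term estimate
\[
\sum_{\substack{d\mid P\\ d\le y^u}}\lambda^\pm_d\,\frac{g(d)}{d} = \bigl(1+O(u^{-u/2})\bigr)\,V(P).
\]

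The gap is exactly where you flag it: this main-term estimate carries the entire content of the lemma, and your paragraph sketching it is a heuristic, not an argument. The statement that ``passing from the level-$j$ to the level-$(j+1)$ truncation shrinks the error by a factor of order $1/j$'' is not substantiated --- you have not specified what the level-$j$ truncation is, what quantity is being shrunk, or what inductive invariant is preserved. A real proof needs an explicit weight construction (Brun's pure or iterated sieve is the classical choice; the Rosser--Iwaniec $\beta$-sieve also yields the fundamental lemma as a corollary, though its machinery is built to reach the sieve limit rather than to extract the super-exponential decay in $u$, so invoking it is somewhat tangential here), and then a careful bookkeeping over the decreasing chain of prime factors of the $d$'s excluded by the cutoff, combined with a Rankin-type or telescoping device whose convergence rate is governed by the dimension hypothesis on $g$. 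All of this must be done uniformly in $c_1,c_2$, which is what the paper actually uses. So your plan is the right one, but the step in which the $u^{-u/2}$ saving materializes needs to be turned into a genuine proof before the proposal establishes the lemma.
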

\noindent We will only use the fundamental lemma with $g=1$. In this case the constants $c_1,c_2$, and consequently the implied constants in the claim are uniform in the sequence $(a_n)_{n \ge 1}$ and the set of primes $\mathcal{P}$.

We now prove a discrepancy estimate for the set of reduced fractions with a fixed denominator $N$. Note that the corresponding result for the set of all (not necessarily reduced) fractions with a fixed denominator $N$ is trivial. In the statement of the lemma, and in the sequel, we write $\lambda$ for the Lebesgue measure. 
\begin{lemma}\label{lemma_Manuel}
Let $N \ge 2$ be an integer, and let $I \subseteq [0,1]$ be an interval. For all reals $T \geq \log N$ and $u \ge 1$,
\[ \sum_{a \in \mathbb{Z}_N^*} \mathds{1}_I \left( \frac{a}{N} \right) = \lambda(I) \varphi(N) \left( 1 + O\left( u^{-u/2} + \frac{\log N}{T} \right) \right) + O\left( T^u \right) \]
with absolute implied constants.
\end{lemma}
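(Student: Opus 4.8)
The plan is to reduce the count to a sifting problem and feed it into the fundamental lemma of sieve theory (Lemma \ref{lemma_fund}). Writing $a_n = \mathds{1}[n \in NI]$ for $n \ge 1$, these weights are supported on $\{1,\dots,N\}$ because $I \subseteq [0,1]$, and the quantity to estimate is exactly $\sum_{\gcd(n,N)=1} a_n$ (for $N \ge 2$ the endpoint $n=N$ is automatically excluded since $\gcd(N,N)=N$). The key decision is to fix the sieving range at $z := T$ and sieve only by the \emph{small} prime factors of $N$: set $\mathcal{P} = \{ p \text{ prime} : p \mid N,\ p \le z \}$ and $P = \prod_{p \in \mathcal{P}} p$, so that $\max \mathcal{P} \le T$. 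It is essential not to take $\mathcal{P} = \{ p : p \mid N \}$, since then the remainder term $\sum_{d \le y^u,\, d \mid P} |r_d|$ of Lemma \ref{lemma_fund} could be as large as $(P^+(N))^u$, a power of $N$, which would swamp the target error $O(T^u)$.

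Next I would apply Lemma \ref{lemma_fund} with $g \equiv 1$ and $x = \lambda(I) N$. Here $A_d = \#\{ m \in \mathbb{Z} : dm \in NI \} = \lambda(I) N / d + r_d$ with $|r_d| \le 1$, and the Mertens-type product hypothesis holds with absolute constants $c_1,c_2$ for $g \equiv 1$, as remarked right after Lemma \ref{lemma_fund}. This yields
\[
\sum_{\gcd(n, P) = 1} a_n = \bigl( 1 + O(u^{-u/2}) \bigr)\, \lambda(I) N \prod_{p \in \mathcal{P}} \Bigl( 1 - \frac{1}{p} \Bigr) + O\Bigl( \sum_{\substack{d \le T^u \\ d \mid P}} |r_d| \Bigr),
\]
where the remainder is at most the number of squarefree integers up to $T^u$, hence $\le T^u$. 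For the main term one has $N \prod_{p \in \mathcal{P}} (1 - 1/p) = \varphi(N) \prod_{p \mid N,\ p > T} (1 - 1/p)^{-1}$, and since
\[
0 \le \log \prod_{p \mid N,\ p > T} \Bigl( 1 - \frac{1}{p} \Bigr)^{-1} \le \sum_{p \mid N,\ p > T} \frac{2}{p} \le \frac{2\, \omega(N)}{T} \ll \frac{\log N}{T} \le 1
\]
(using $\omega(N) \le \log_2 N$ and $T \ge \log N$), that product equals $1 + O(\log N / T)$. Hence $\sum_{\gcd(n,P)=1} a_n = \lambda(I) \varphi(N) \bigl( 1 + O(u^{-u/2} + \log N / T) \bigr) + O(T^u)$.

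It then remains to pass from $\sum_{\gcd(n,P)=1} a_n$ to $\sum_{\gcd(n,N)=1} a_n$, i.e.\ to discard the integers in $NI$ that survive the truncated sieve but are divisible by a prime factor of $N$ exceeding $T$. Their number is
\[
\le \sum_{\substack{p \mid N \\ p > T}} \Bigl( \frac{\lambda(I) N}{p} + 1 \Bigr) \le \lambda(I) N \sum_{\substack{p \mid N \\ p > T}} \frac{1}{p} + \#\{ p \mid N : p > T \}.
\]
For $N \ge 3$ one has $T \ge \log N > 1$, so the primes $p \mid N$ with $p > T$ number at most $\log N / \log T \le \log N / \log\log N$ (their product divides $N$), whence the first term is $\le \lambda(I) N \cdot \frac{\log N}{T \log\log N} \ll \lambda(I) \varphi(N)\, \frac{\log N}{T}$ by the classical bound $N / \varphi(N) \ll \log\log N$, and the second term is $\le \omega(N) \le \log_2 N \ll T^u$. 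Collecting everything gives the claimed identity with absolute implied constants (the remaining degenerate range of tiny $N$ being trivial, as both sides are then $O(T^u)$).

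The main obstacle is precisely the tension built into the sieve estimate: making $u^{-u/2}$ negligible forces $u$ large, but then the remainder $\sum_{d \le y^u} |r_d|$ explodes unless the largest sifted prime $y$ is held down to $T$; truncating the sieve at $T$ is therefore unavoidable, and the price is the separate, deliberately crude treatment of the prime factors of $N$ above $T$. Verifying that this crude step costs only a relative factor $\log N / T$ is the one place where the arithmetic of $N$ enters, through $\omega(N) \ll \log N$, the bound $\#\{ p \mid N : p > T \} \ll \log N / \log\log N$ valid when $T \ge \log N$, and $N / \varphi(N) \ll \log\log N$.
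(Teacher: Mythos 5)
Your proof is correct and follows essentially the same route as the paper's: you sieve by the prime factors of $N$ up to $T$ (the paper phrases this as sieving by the $T$-smooth part $N^*$, but the sieving set $\mathcal{P}$ is identical), apply the fundamental lemma with $g \equiv 1$ and $x = \lambda(I)N$, compare $\varphi(N^*)/N^*$ with $\varphi(N)/N$ via the Mertens estimate, and then crudely bound the correction from the prime factors of $N$ exceeding $T$. The differences are purely cosmetic; if anything, you are slightly more explicit about why the truncation at $T$ is forced (otherwise the remainder would be a power of $P^+(N)$) and you handle the complementary count more carefully than the paper's display, which contains a harmless typo in the condition on the second sum.
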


\begin{proof} Fix $T \ge \log N$ and $u \ge 1$. Let $N^*$ be the $T$-smooth part of $N$, i.e.\ the largest divisor of $N$ whose prime factors are all at most $T$, and consider
\begin{equation}\label{T_smooth}
\sum_{a \in \mathbb{Z}_N^*} \mathds{1}_I \left( \frac{a}{N} \right) = \sum_{\substack{a=1 \\ \mathrm{gcd} (a,N^*)=1}}^N \mathds{1}_{I} \left( \frac{a}{N} \right) - \sum_{\substack{a=1 \\ \mathrm{gcd} (a,N)=1,~ \mathrm{gcd}(a,N^*)>1}}^N \mathds{1}_{I} \left( \frac{a}{N} \right) .
\end{equation}
We apply the fundamental lemma of sieve theory to the first sum in \eqref{T_smooth}, with $\mathcal{P}$ being the set of prime divisors of $N^*$. Note that $\max \mathcal{P} \le T$ by construction, and that for all $d \mid P$ we have
\[ \sum_{\substack{a=1 \\ d \mid a}}^N \mathds{1}_{I} \left( \frac{a}{N} \right) = \frac{N\lambda(I)}{d} + O(1). \]
We thus obtain
\[ \sum_{\substack{a=1 \\ \mathrm{gcd} (a,N^*)=1}}^N \mathds{1}_{I} \left( \frac{a}{N} \right) = N\lambda(I)\frac{\varphi(N^*)}{N^*}  \left( 1+O \left( u^{-u/2} \right) \right) + O\left( T^u \right) . \]
Observe that
\[ \frac{\varphi(N^*)/N^*}{\varphi(N)/N} = \frac{1}{\prod\limits_{\substack{p \mid N, \\ p>T}}\left( 1-\frac{1}{p} \right)} = \prod_{\substack{p \mid N, \\ p>T}} \left( 1+ \frac{1}{p-1} \right)  = 1 + O \left( \frac{\log N}{T} \right) , \]
which leads to
\[ \sum_{\substack{a=1 \\ \mathrm{gcd} (a,N^*)=1}}^N \mathds{1}_{I} \left( \frac{a}{N} \right) =  \lambda(I) \varphi(N) \left( 1+O \left( u^{-u/2} + \frac{\log N}{T}\right) \right) + O\left( T^u \right) . \]
To estimate the second sum in \eqref{T_smooth}, note that $\frac{N}{N^{*}}$ has $\ll \frac{\log N}{\log T}$ many prime divisors, hence
\[ \begin{split} \sum_{\substack{a=1 \\ \mathrm{gcd} (a,N)=1,~ \mathrm{gcd}(a,N^*)>1}}^N \mathds{1}_{I} \left( \frac{a}{N} \right) & \le \sum_{p \mid \frac{N}{N^*}}~ \sum_{\substack{1 \leq a \leq N, \\ p \mid a}} \mathds{1}_{I} \left( \frac{a}{N} \right) \le \sum_{p \mid \frac{N}{N^*}} \left( \frac{N \lambda (I)}{p} +1 \right) \\ &\ll \frac{\log N}{\log T} \left( \frac{N \lambda (I)}{T} +1 \right)  \ll \frac{\log N}{T} \lambda(I) \varphi (N) + \frac{\log N}{\log T} . \end{split} \]
In the last step we used $N/\log T \ll \varphi (N) \log \log N / \log T \ll \varphi (N)$. The claim of the lemma follows from the previous two formulas.
\end{proof}

The discrepancy estimate in Lemma \ref{lemma_Manuel} gives bounds for $\sum_{a \in \mathbb{Z}_N^*} g(a/N)$ for any function of bounded variation $g$. We will work with functions $g$ supported on a short interval, and use the following variant of Koksma's inequality. The classical form of this inequality \cite[p.\ 143]{kn} corresponds to the case $[c,d]=[0,1]$, i.e.\ without any vanishing condition on $g$. Throughout the rest of the paper, we write $V(g;[0,1])$ for the total variation of a function $g$ on the interval $[0,1]$.
\begin{lemma}\label{lemma_koksma}
Let $x_1, x_2, \ldots, x_M \in [0,1]$, and let $g: [0,1] \to \mathbb{R}$ be a function of bounded variation which vanishes outside an interval $[c,d] \subseteq [0,1]$. Then
\[ \left| \frac{1}{M} \sum_{m=1}^M g (x_m) - \int_0^1 g(x) \, \mathrm{d} x \right| \le V(g;[0,1]) \cdot \sup_{I \subseteq [c,d]} \left| \frac{1}{M} \sum_{m=1}^M \mathds{1}_I (x_m) - \lambda (I) \right| , \]
where the supremum is taken over all intervals $I \subseteq [c,d]$.
\end{lemma}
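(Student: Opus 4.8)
The plan is to run the classical proof of Koksma's inequality, whose only new twist here is that the vanishing of $g$ outside $[c,d]$ lets us \emph{re-anchor} the discrepancy at an endpoint of $[c,d]$ instead of at $0$. If $[c,d]=[0,1]$ the assertion is (a form of) the classical Koksma inequality, cf.\ \cite[p.\ 143]{kn}, so I may assume $[c,d]\subsetneq[0,1]$ and, after reflecting via $x\mapsto 1-x$ if necessary, that $d<1$; in particular $g(d^{+})=0$. Since $g$ vanishes outside $[c,d]$, only the points $x_m$ lying in $[c,d]$ contribute to $\frac{1}{M}\sum_m g(x_m)$, and $\int_0^1 g=\int_{[c,d]}g$.

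First I would represent $g$ as a superposition of indicator functions of subintervals of $[c,d]$. Normalising $g$ to be right-continuous (harmless up to the technical point noted below) and writing $\mathrm{d}g$ for the associated signed Lebesgue--Stieltjes measure, the hypothesis that $g\equiv 0$ on $[0,c)\cup(d,1]$, together with $g(d)=g(d^{+})=0$, gives the identity
\[
g(x)=-\int_{[c,d]}\mathds{1}_{[c,t)}(x)\,\mathrm{d}g(t)\qquad\text{for every }x\in[0,1],
\]
which expresses $g$ as a combination of the indicators of the intervals $[c,t)\subseteq[c,d]$. Applying the functionals $g\mapsto\frac{1}{M}\sum_{m=1}^{M}g(x_m)$ and $g\mapsto\int_0^1 g(x)\,\mathrm{d}x$ to both sides and subtracting --- interchanging in each term the finite sum, resp.\ the Lebesgue integral, with the integral against $\mathrm{d}g$, which is legitimate since $\lvert\mathrm{d}g\rvert([c,d])<\infty$ --- yields
\[
\frac{1}{M}\sum_{m=1}^{M}g(x_m)-\int_0^1 g(x)\,\mathrm{d}x=-\int_{[c,d]}\Delta(t)\,\mathrm{d}g(t),\qquad \Delta(t):=\frac{1}{M}\sum_{m=1}^{M}\mathds{1}_{[c,t)}(x_m)-\lambda([c,t)).
\]
Now $\Delta(t)$ is exactly the discrepancy of the interval $[c,t)\subseteq[c,d]$, so $\lvert\Delta(t)\rvert\le\sup_{I\subseteq[c,d]}\lvert\frac{1}{M}\sum_m\mathds{1}_I(x_m)-\lambda(I)\rvert$ for all $t\in[c,d]$; bounding the integral by this supremum times $\lvert\mathrm{d}g\rvert([c,d])\le V(g;[0,1])$ gives the claimed inequality.

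The one step demanding care --- exactly as in the classical case, cf.\ \cite[pp.\ 143--144]{kn} --- and the place I expect to be the only genuinely delicate point, is the Stieltjes calculus when $g$ is discontinuous at one of the $x_m$: then $\mathrm{d}g$ and the empirical measure $\frac1M\sum_m\delta_{x_m}$ share an atom, so that the value of $\int\Delta\,\mathrm{d}g$ "at" such a point is a priori ambiguous, and replacing $g$ by its right-continuous modification can alter the values $g(x_m)$. This is resolved by the standard device of carrying out the manipulation via Abel summation over the partition of $[c,d]$ whose nodes are the finitely many points $x_m\in[c,d]$ together with $c$ and $d$ --- on each subinterval the relevant step function is then constant, so the identity becomes elementary --- followed by a routine limiting argument, precisely as in the proof of the classical Koksma inequality. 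Beyond this bookkeeping, the content of the lemma is just the displayed identity together with the elementary observation that the relevant test intervals may be confined to $[c,d]$ precisely because $g\equiv 0$ on $[0,c)\cup(d,1]$.
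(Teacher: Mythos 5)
Your proof is correct, and it takes a genuinely different (though equally classical) route from the paper's. You represent $g$ via its Lebesgue--Stieltjes measure, $g(x)=-\int_{[c,d]}\mathds{1}_{[c,t)}(x)\,\mathrm{d}g(t)$, and push the two linear functionals through this representation; the test intervals that appear are the anchored intervals $[c,t)$, and the variation enters as $\lvert\mathrm{d}g\rvert([c,d])$. The paper instead first reduces to the case of $g$ monotone on $[0,d]$ with $g(0)=0$, uses the layer-cake identity $g(x)=\int_0^{g(d)}\mathds{1}_{\{g>t\}}(x)\,\mathrm{d}t$ together with Fubini, and observes that the superlevel sets $\{g>t\}$ are subintervals of $[c,d]$; the general case then follows by Jordan decomposition and a reflection $x\mapsto 1-x$ to dispose of the anchoring. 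What your approach buys is a single formula that handles general $g$ directly, at the cost of the atom bookkeeping you correctly flag (the shared atoms of $\mathrm{d}g$ and the empirical measure, and the effect of passing to a right-continuous representative on the values $g(x_m)$); what the paper's approach buys is that the Fubini step is entirely elementary and never touches Stieltjes calculus, so there is no atom issue to resolve, at the modest cost of a Jordan decomposition and a case split to remove the assumption $g(0)=0$. Both are standard proofs of Koksma-type inequalities, and your sketch of the Abel-summation fix is the right way to close the gap; nothing essential is missing.
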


\begin{proof}
Assume first that $g(0)=0$, and that $g$ is non-decreasing on $[0,d]$. Note that for any $0 \le t \le g(d)$, the set $I(t) = \{ x \in [0,1] \, : \, g(x)>t \}$ is a subinterval of $[c,d]$. A simple application of Fubini's theorem shows that
\begin{eqnarray*} \left| \frac{1}{M} \sum_{m=1}^M g (x_m) - \int_0^1 g(x) \, \mathrm{d} x \right| & = & \left| \int_0^{g(d)} \left( \frac{1}{M} \sum_{m=1}^M \mathds{1}_{I(t)}(x_m) - \lambda (I(t)) \right) \, \mathrm{d} t \right| \\
 & \leq & g(d) \cdot \sup_{I \subseteq [c,d]} \left| \frac{1}{M} \sum_{m=1}^M \mathds{1}_I (x_m) - \lambda (I) \right|,
\end{eqnarray*}
and clearly $g(d) = V(g;[0,1])$. Applying the previous formula to the Jordan decomposition of a general function of bounded variation on $[0,d]$, the claim of the lemma follows under the extra assumption $g(0)=0$. This extra assumption is easily removed as follows. If $[c,d]=[0,1]$, then both the conditions and the claim of the lemma are invariant under adding a constant to $g$, hence we may assume that $g(0)=0$. If $c>0$, then $g(0)=0$ by assumption. If $c=0$ and $d<1$, then we repeat the same arguments for the function $\tilde{g}(x)=g(1-x)$, the point set $\tilde{x}_m = 1-x_m$ and the interval $[\tilde{c},\tilde{d}]=[1-d,1-c]$.
\end{proof}

\section{Weight functions} \label{sec_weight}

Throughout the following sections, we fix integers $N \ge 3$ and $1 \le \eta \le \theta$, and a non-decreasing function $f: \mathbb{N} \to [0,\infty )$. As noted in the Introduction, the continued fraction expansion of $a/N$, $a \in \mathbb{Z}_N^*$ is written as $a/N=[0;a_1,a_2,\ldots, a_r]$, where the partial quotients $a_i=a_i(a/N)$ and the length $r=r(a/N)$ are functions of $a/N$. The convergents to $a/N$ are denoted by $p_i/q_i=[0;a_1,a_2,\ldots, a_i]$, $0 \le i \le r$ (here, $a_i = 1$ is possible), where the convergent numerators $p_i=p_i(a/N)$ and denominators $q_i=q_i(a/N)$ are likewise functions of $a/N$. The main object in the following sections is the restricted sum
\begin{equation} \label{S_def}
 S_{f,\eta,\theta} \left( \frac{a}{N} \right) := \sum_{i=1}^r \mathds{1}_{\{\eta \leq a_i \le \theta \}} f(a_i).
\end{equation}

Given any integers $k \ge 1$, $b \in \mathbb{Z}_k^*$ and $m \ge 1$, we define intervals $I(b/k,m)$ and $I'(b/k,m)$ as follows. Assume first that $k \ge 2$, and consider the continued fraction expansion $b/k=[0;b_1,b_2,\ldots, b_s]$ with $b_s>1$. Let $I(b/k,m)$ be the interval with endpoints $[0;b_1,b_2,\ldots, b_s,m]$ (included if and only if $m>1$) and $[0;b_1,b_2,\ldots, b_s, m+1]$ (not included), and let $I'(b/k,m)$ be the interval with endpoints $[0;b_1,b_2,\ldots, b_{s-1},b_s-1,1,m]$ (included if and only if $m>1$) and $[0;b_1,b_2,\ldots, b_{s-1},b_s-1,1,m+1]$ (not included). This definition cannot be directly used for the (unimportant) special case $k=b=1$, so for completeness in this case we define
\[ I(1,m) = \left\{ \begin{array}{ll} (1/2, 1) & \textrm{if } m=1, \\ \left( [0;m+1], [0;m] \right] & \textrm{if } m>1, \end{array} \right. \]
and
\[ I'(1,m) = \left\{ \begin{array}{ll} \left( [0;1,1], [0;1,2] \right) & \textrm{if } m=1, \\ \left[ [0;1,m], [0;1,m+1] \right) & \textrm{if } m>1. \end{array} \right. \]

We introduce the weight function $w_{f,\eta, \theta} (b/k,x)$, which is defined as
\begin{equation} \label{def_weight} w_{f,\eta,\theta} \left( \frac{b}{k}, x \right) = \sum_{m=\eta}^{\theta} f(m)\cdot\left( \mathds{1}_{I(b/k,m)} \left(x \right) +  \mathds{1}_{I'(b/k,m)} \left(x\right)\right), \qquad x \in [0,1]. \end{equation}
The intervals $I(b/k,m)$ and $I'(b/k,m)$ are constructed in such a way that for any $a \in \mathbb{Z}_N^*$ with continued fraction expansion $a/N=[0;a_1,a_2,\ldots, a_r]$ we have
\[ \sum_{i=1}^r \mathds{1}_{\{ a_i=m \}} = \sum_{k=1}^{N-1} \sum_{b \in \mathbb{Z}_k^*} \left( \mathds{1}_{I(b/k,m)} \left( \frac{a}{N} \right) +  \mathds{1}_{I'(b/k,m)} \left( \frac{a}{N} \right) \right). \]
The right-hand side of this equation shows how $I(b/k,m)$ and $I'(b/k,m)$ are used to count how often the partial quotient $m$ occurs in the continued fraction expansion of $a/N$: occurrences after a partial quotient greater than $1$ are counted with the indicator of $I(b/k,m)$, whereas occurrences after a partial quotient equal to $1$ are counted with the indicator of $I'(b/k,m)$. Multiplying the previous formula by $f(m)$ and summing over $\eta \le m \le \theta$ thus leads to
\[ \sum_{i=1}^r \mathds{1}_{\{ \eta \le a_i \le \theta \}} f(a_i) = \sum_{k=1}^{N-1} \sum_{b \in \mathbb{Z}_k^*} w_{f,\eta,\theta} \left( \frac{b}{k}, \frac{a}{N} \right) . \]
More generally, for any $A \subseteq \{ 1,2,\ldots, N-1 \}$, we have
\begin{equation}\label{weightfunctionidentity}
\sum_{\substack{1 \leq i \leq r, \\ q_{i-1} \in A}} \mathds{1}_{\{ \eta \le a_i \le \theta \}} f(a_i) = \sum_{k \in A} \sum_{b \in \mathbb{Z}_k^*} w_{f,\eta,\theta} \left( \frac{b}{k}, \frac{a}{N} \right) .
\end{equation}

Before we continue with the formal proofs, we give an outline of the general strategy. To study the sum of partial quotients $S(a/N)$, we will use $f(x)=x$. Finding the expected value and estimating the variance of the restricted sums $S_{f,\eta,\theta}$ as defined in \eqref{S_def} for this choice of $f$ and for a wide range of parameters $\eta,\theta$ will allow us to understand the distribution of the unrestricted sum $S(a/N)$ as well. Note that the cutoff at $\theta$ is necessary to obtain meaningful results, since (as mentioned in the Introduction) otherwise the average order of the sum of partial quotients would be dominated by the contribution of very few fractions $a/N$ that possess unusually large partial quotients, thereby disrupting the concentration effect which we are trying to detect.

We will take a sum of $w_{f,\eta, \theta}(b/k,a/N)$ over all $a \in \mathbb{Z}_N^*$, all $b \in \mathbb{Z}_k^*$, and finally over all $k< N$. To actually calculate this sum, we use the regularity of the distribution of the sets of reduced fractions $(a/N)_{a \in \mathbb{Z}_N^*}$ resp.\ $(b/k)_{b \in \mathbb{Z}_k^*}$ as asserted by Lemma \ref{lemma_Manuel}, and the fact that by Lemma \ref{lemma_koksma} a sum of function evaluations can be efficiently approximated by an integral, assuming the regularity of the set of sampling points. To estimate the number of partial quotients $L_{[b,c]}(a/N)$ which fall into the interval $[b,c]$, we use a similar reasoning but with $f(x)=1$, $\eta =b$ and $\theta =c$ instead of $f(x)=x$. We note that some of the basic ideas of this approach are taken from papers of Larcher \cite{larcher} and Rukavishnikova \cite{ruka_1,ruka_2}.

The following two lemmas describe how to estimate the first and second moments of the restricted sum $S_{f,\eta,\theta} (a/N)$ in terms of the weight function $w_{f,\eta, \theta}$. A key point of the lemmas is the fact that the summation over $k$ only runs up to $\sqrt{N}$, and not all the way up to $N$; this is crucial since for $k$ of size near $\sqrt{N}$ the errors coming from the discrepancy bounds in Lemmas \ref{lemma_Manuel} and \ref{lemma_koksma} start being too large. The intuitive explanation why such a reduction for the range of $k$ is possible is that the continued fraction expansion $[0;a_1, \dots, a_r]$ can bijectively be flipped around to give the continued fraction $[0;a_r, \dots, a_1]$ of a reduced fraction with the same denominator. Consequently, when summing over all possible values $a \in \mathbb{Z}_N^*$, every admissible continued fraction expansion occurs once in ``correct'' order and once in reflected order as well, so that instead of taking a statistics of all partial quotients of $a/N$, we can instead only consider those partial quotients associated to convergent denominators going up to $\sqrt{N}$, and then compensate for this reduced range by noting that the remaining partial quotients appear at the initial segment of the reflected continued fraction (so that we get the correct result when restricting to $k < \sqrt{N}$ and compensating for this by multiplying the overall count by a factor of 2). However, the actual situation is slightly more complicated because $[0;a_r, \dots, a_1]$ is not an admissible continued fraction expansion if $a_1=1$, and because we have to be careful that partial quotients which are associated with convergent denominators close to $\sqrt{N}$ are neither omitted nor counted twice when invoking the reflection argument. 

\begin{lemma}\label{lemma_weightfunctions} Assume that $\sqrt{N} > \theta+2$. Then
\[ \begin{split} \sum_{a \in \mathbb{Z}_N^*} S_{f,\eta,\theta} \left( \frac{a}{N} \right)   & = 2 \sum_{1 \le k < \sqrt{N}} \sum_{a \in \mathbb{Z}_N^*} \sum_{b \in \mathbb{Z}_k^*} w_{f,\eta,\theta} \left( \frac{b}{k} , \frac{a}{N} \right) \\ & \quad +O \left( N \sum_{m=\eta}^{\theta} \frac{f(m)}{m^2} + \sum_{\frac{\sqrt{N}}{\theta +2} < k < \sqrt{N}} \sum_{a \in \mathbb{Z}_N^*} \sum_{b \in \mathbb{Z}_k^*} w_{f,\eta,\theta} \left( \frac{b}{k} , \frac{a}{N} \right) \right) \end{split} \]
with an absolute implied constant.
\end{lemma}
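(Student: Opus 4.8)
The plan is to rewrite the left-hand side as a sum over ``marked indices'' and then exploit the symmetry of continued fractions under reversal in order to cut the range of convergent denominators down to $k<\sqrt N$, paying only with the contribution of the extreme partial quotients and of the convergent denominators lying just below $\sqrt N$. Write $K(c_1,\dots,c_n)$ for the continuant (the empty one being $1$), so that $q_i(a/N)=K(a_1,\dots,a_i)$ and the numerator of $a/N$ equals $K(a_2,\dots,a_r)$. Call a pair $(a,i)$ with $a\in\mathbb Z_N^*$, $1\le i\le r(a/N)$ and $\eta\le a_i\le\theta$ a \emph{marked index}, attach to it the weight $f(a_i)$, and put $k(a,i):=q_{i-1}(a/N)$, $\tilde k(a,i):=K(a_{i+1},\dots,a_r)$. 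Then $\sum_{a\in\mathbb Z_N^*}S_{f,\eta,\theta}(a/N)$ is exactly the sum of $f(a_i)$ over all marked indices, and for any $A\subseteq\{1,\dots,N-1\}$ the identity \eqref{weightfunctionidentity} says that the partial sum over marked indices with $k(a,i)\in A$ equals $\sum_{k\in A}\sum_{b\in\mathbb Z_k^*}\sum_{a\in\mathbb Z_N^*}w_{f,\eta,\theta}(b/k,a/N)$.

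The first step is the \emph{continuant sandwich} $N/(a_i+2)\le k(a,i)\,\tilde k(a,i)\le N-1$, valid for every marked index. For $i\ge2$ this follows from the three-term identity $K(a_1,\dots,a_r)=K(a_1,\dots,a_{i-1})K(a_i,\dots,a_r)+K(a_1,\dots,a_{i-2})K(a_{i+1},\dots,a_r)$ combined with the elementary monotonicity $\tilde k\le K(a_i,\dots,a_r)\le(a_i+1)\tilde k$ and $q_{i-2}\le q_{i-1}$; for $i=1$ one has $k\tilde k=a$, and the bound is immediate from $1/(a_1+1)<a/N<1/a_1$. Since $a_i\le\theta$ and, by assumption, $\sqrt N>\theta+2$, the sandwich shows that $\min(k,\tilde k)<\sqrt N$ for every marked index, that $k$ and $\tilde k$ cannot both be $\ge\sqrt N$, and that whenever both are $<\sqrt N$ then in fact both lie in $(\sqrt N/(\theta+2),\sqrt N)$.

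The second step is the reversal. The map $a\mapsto a^*:=q_{r-1}(a/N)$ is a bijection of $\mathbb Z_N^*$ (the one alluded to in the introduction) with $a^*/N=[0;a_r,\dots,a_1]$, whose \emph{canonical} expansion is $[0;a_r,\dots,a_1]$ when $a_1>1$ and $[0;a_r,\dots,a_3,a_2+1]$ when $a_1=1$. Let $\mathcal M_0$ be the set of marked indices with $i\in\{1,r\}$, or with $i=2$ and $a_1=1$, and let $\mathcal M_1$ be the remaining (``inner'') ones. Using continuant symmetry and the identity $K(c_1,\dots,c_{n-1},c_n,1)=K(c_1,\dots,c_{n-1},c_n+1)$ — the algebraic fact underlying the split into $I(b/k,m)$ and $I'(b/k,m)$ in \eqref{def_weight} — I would check that $(a,i)\mapsto(a^*,i')$, with $i'$ the position of $a_i$ in the canonical expansion of $a^*/N$, is a weight-preserving bijection of $\mathcal M_1$ onto itself which interchanges $k$ and $\tilde k$; discarding $\mathcal M_0$ is precisely what keeps $a_i$ unchanged in the (possibly shorter) canonical expansion of $a^*/N$ and keeps the image inner. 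The $\mathcal M_0$-contribution is at most $\sum_{m=\eta}^{\theta}f(m)$ times the number of $a\in\mathbb Z_N^*$ whose first, resp.\ last, resp.\ (when $a_1=1$) second partial quotient equals $m$; each such count is $O(N/m^2)$ — using $a_r(a/N)=(a^*)_1$ for the ``last'' case, $a\mapsto N-a$ for the ``second'' case, and $m+2\le\sqrt N$ to absorb the trivial term — so it is $O\bigl(N\sum_{m=\eta}^{\theta}f(m)/m^2\bigr)$. Splitting $\mathcal M_1$ according to $k<\sqrt N$ or $k\ge\sqrt N$ and using that the reversal sends $\{k\ge\sqrt N\}$ bijectively onto $\{k<\sqrt N\le\tilde k\}$ inside $\mathcal M_1$ (by the sandwich), one obtains
\[ \sum_{\text{marked}}f(a_i)=2\sum_{\substack{(a,i)\in\mathcal M_1\\ k<\sqrt N}}f(a_i)-\sum_{\substack{(a,i)\in\mathcal M_1\\ k,\tilde k<\sqrt N}}f(a_i)+O\!\left(N\sum_{m=\eta}^{\theta}\frac{f(m)}{m^2}\right). \]
By \eqref{weightfunctionidentity} with $A=\{k<\sqrt N\}$ (absorbing the part of $\mathcal M_0$ with $k<\sqrt N$ into the error) the first sum equals $\sum_{1\le k<\sqrt N}\sum_{a\in\mathbb Z_N^*}\sum_{b\in\mathbb Z_k^*}w_{f,\eta,\theta}(b/k,a/N)$; and since all weights are nonnegative, the second sum is at most $\sum_{\sqrt N/(\theta+2)<k<\sqrt N}\sum_{a\in\mathbb Z_N^*}\sum_{b\in\mathbb Z_k^*}w_{f,\eta,\theta}(b/k,a/N)$, because every marked index appearing there has $k$ in that window. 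This is the assertion of the lemma.

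The main obstacle will be the bookkeeping in the reversal step: one must verify carefully that the index map $i\mapsto i'$ is a genuine bijection of the inner marked indices — in particular when $a_1=1$, so that the canonical expansion of $a^*/N$ is one term shorter — and that the convergent denominator immediately preceding $a_i$ inside $a^*/N$ always equals $\tilde k(a,i)$; this is exactly what the two interval families $I(b/k,m)$ and $I'(b/k,m)$ in \eqref{def_weight} are designed to absorb. By comparison, the continuant sandwich and the estimation of the discarded boundary and near-$\sqrt N$ terms are routine once this combinatorial framework is in place.
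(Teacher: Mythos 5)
There is a genuine gap at the heart of the reversal step. You define $a^*:=q_{r-1}(a/N)$, so that $a^*/N=[0;a_r,\dots,a_1]$, and claim this is a bijection of $\mathbb{Z}_N^*$; it is not. Since the canonical expansion of $a/N$ always ends with $a_r\ge 2$, the reversed fraction $[0;a_r,\dots,a_1]$ always begins with a partial quotient $\ge 2$, hence $a^*\le N/2$ for \emph{every} $a\in\mathbb{Z}_N^*$. Concretely, for $N=5$ one gets $1\mapsto 1$, $2\mapsto 2$, $3\mapsto 2$, $4\mapsto 1$: the map is a $2$-to-$1$ surjection onto $\{a\le N/2\}$. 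Consequently the induced map on inner marked indices $(a,i)\mapsto(a^*,i')$ is also $2$-to-$1$ onto $\{(c,\ell)\in\mathcal M_1:c\le N/2\}$, not a ``weight-preserving bijection of $\mathcal M_1$ onto itself'', and the claim that it sends $\{k\ge\sqrt N\}$ \emph{bijectively} onto $\{k<\sqrt N\le\tilde k\}$ fails: the image never reaches any marked index with numerator $>N/2$.

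The paper sidesteps exactly this issue by using \emph{two} reversals: $a^*/N=[0;a_r,\dots,a_1]$ when $a\le N/2$, but $a^*/N=[0;1,a_r-1,a_{r-1},\dots,a_3,a_2+1]=1-[0;a_r,\dots,a_1]$ (i.e.\ $a^*=N-q_{r-1}$) when $a>N/2$. Each is a bijection of the corresponding half of $\mathbb{Z}_N^*$ onto itself, which is what makes the ``factor $2$'' summation legitimate, at the cost of the boundary terms coming from $a_1,a_2,a_r$ that the paper estimates explicitly. Interestingly, your final displayed identity is nevertheless true: one can show that $(c,\ell)\mapsto(N-c,\ell+1)$ is a bijection of $\{(c,\ell)\in\mathcal M_1:c\le N/2\}$ onto $\{(c,\ell)\in\mathcal M_1:c>N/2\}$ preserving both the weight and $\tilde k$, so the ``missing'' half of $\{k<\sqrt N\le\tilde k\}$ has exactly the same total weight as the half you do hit, and the $2$-to-$1$ overcounting cancels the factor $2$. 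But you do not supply this argument, and as written the derivation of the key identity does not follow. Either insert the $c\mapsto N-c$ symmetry, or replace your $a^*$ on $\{a>N/2\}$ by the paper's $N-q_{r-1}$; with that repair, the continuant-sandwich framework and the $\mathcal M_0/\mathcal M_1$ bookkeeping give a clean proof.
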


\begin{proof} For $a \in \mathbb{Z}_N^*$ with $a \leq N/2$ and continued fraction expansion $a/N=[0;a_1,a_2,\ldots, a_r]$, we define the integer $a^*$ by $a^*/N = [0;a_r, \dots, a_1]$. Note that $a \leq N/2$ implies that $a_1 \geq 2$, so that $[0;a_r,\ldots, a_1]$ indeed is an admissible continued fraction expansion. Note also that $a \mapsto a^*$ is a bijective map from the set $\{a \in \mathbb{Z}_N^* \, : \, a \leq N/2\}$ onto itself. The convergents to $a^*/N$ are $\frac{p_{r-i}(a^*/N)}{q_{r-i}(a^*/N)} = [0;a_r, a_{r-1}, \ldots, a_{i+1}]$, and with the usual convention $q_{-1} := 0$, one readily checks the identity
\begin{equation} \label{identity} 
q_i(a/N) q_{r-i}(a^*/N) + q_{i-1}(a/N) q_{r-i-1}(a^*/N) = N, \qquad 1 \le i \le r . 
\end{equation}
In particular, at least one of the inequalities $q_{i-1}(a/N)<\sqrt{N}$, $q_{r-i}(a^*/N)<\sqrt{N}$ holds, and we can write
\begin{equation} \begin{split} \label{we_can_write} S_{f,\eta,\theta} \left( \frac{a}{N} \right) & = \sum_{\substack{i=1 \\ q_{i-1}(a/N)<\sqrt{N}}}^r \mathds{1}_{\{\eta \leq a_i \le \theta \}} f(a_i) + \sum_{\substack{i=1 \\ q_{r-i}(a^*/N) < \sqrt{N}}}^r \mathds{1}_{\{ \eta \leq a_i \le \theta \}} f(a_i) \\
& \qquad - \sum_{\substack{i=1 \\ q_{i-1}(a/N), q_{r-i}(a^*/N) < \sqrt{N}}}^r \mathds{1}_{\{\eta \leq  a_i \le \theta \}} f(a_i) . \end{split} \end{equation}
Clearly, $q_{r-i}(a^*/N) < \sqrt{N}$ implies that
\[ N < q_i (a/N) \sqrt{N} + q_{i-1}(a/N) \sqrt{N} \le (a_i+2) q_{i-1}(a/N) \sqrt{N} , \]
hence
\begin{equation} \label{we_can_write_2} \sum_{\substack{i=1 \\ q_{i-1}(a/N), q_{r-i}(a^*/N) < \sqrt{N}}}^r \mathds{1}_{\{\eta \leq  a_i \le \theta \}} f(a_i) \le \sum_{\substack{i=1 \\ \frac{\sqrt{N}}{\theta +2} < q_{i-1}(a/N) < \sqrt{N}}}^r \mathds{1}_{\{\eta \leq a_i \le \theta \}} f(a_i) . \end{equation}
Thus we have 
\begin{equation} \label{thus_we_have}
 S_{f,\eta,\theta} \left( \frac{a}{N} \right) \leq \sum_{\substack{i=1 \\ q_{i-1}(a/N)<\sqrt{N}}}^r \mathds{1}_{\{\eta \leq a_i \le \theta \}} f(a_i) + \sum_{\substack{i=1 \\ q_{r-i}(a^*/N) < \sqrt{N}}}^r \mathds{1}_{\{\eta \leq a_i \le \theta \}} f(a_i)
\end{equation}
and
\begin{equation} \begin{split}  \label{thus_we_have_2}
S_{f,\eta,\theta} \left( \frac{a}{N} \right) & \geq \sum_{\substack{i=1 \\ q_{i-1}(a/N)<\sqrt{N}}}^r \mathds{1}_{\{\eta \leq a_i \le \theta \}} f(a_i) + \sum_{\substack{i=1 \\ q_{r-i}(a^*/N) < \sqrt{N}}}^r \mathds{1}_{\{\eta \leq a_i \le \theta \}} f(a_i) \\
& \qquad -\sum_{\substack{i=1 \\ \frac{\sqrt{N}}{\theta +2} < q_{i-1}(a/N) < \sqrt{N}}}^r \mathds{1}_{\{\eta \leq a_i \le \theta \}} f(a_i). \end{split}
\end{equation} 
Writing $(a_i^*)_{1 \leq i \leq r}$ for the partial quotients of $a^*/N$, by re-indexing the sum by $i \mapsto r-i+1$ we have
\begin{equation}\label{symmetry}
\sum_{\substack{i=1 \\ q_{r-i}(a^*/N) < \sqrt{N}}}^r \mathds{1}_{\{\eta \leq a_i \le \theta \}} f(a_i) = \sum_{\substack{i=1 \\ q_{i-1}(a^*/N) < \sqrt{N}}}^r \mathds{1}_{\{\eta \leq a_i^* \le \theta \}} f(a_i^*).
\end{equation}
We thus obtain
\begin{equation} \label{thus_obtain} \begin{split} S_{f,\eta,\theta} \left( \frac{a}{N} \right) &\le \sum_{\substack{i=1 \\ q_{i-1}(a/N)<\sqrt{N}}}^r \mathds{1}_{\{ \eta \leq  a_i \le \theta \}} f(a_i) + \sum_{\substack{i=1 \\ q_{i-1}(a^*/N) < \sqrt{N}}}^r \mathds{1}_{\{\eta \leq a_i^* \le \theta \}} f(a_i^*) \end{split} \end{equation}
and
\begin{equation} \begin{split} \label{thus_obtain_2} 
S_{f,\eta,\theta} \left( \frac{a}{N} \right) &\ge \sum_{\substack{i=1 \\ q_{i-1}(a/N)<\sqrt{N}}}^r \mathds{1}_{\{\eta \leq a_i \le \theta \}} f(a_i) + \sum_{\substack{i=1 \\ q_{i-1}(a^*/N) < \sqrt{N}}}^r \mathds{1}_{\{\eta \leq a_i^* \le \theta \}} f(a_i^*) \\
& \qquad - \sum_{\substack{i=1 \\ \frac{\sqrt{N}}{\theta +2} < q_{i-1}(a/N) < \sqrt{N}}}^r \mathds{1}_{\{\eta \leq a_i \le \theta \}} f(a_i). \end{split} \end{equation}
Observe that the first and second sum on the right-hand side of \eqref{thus_obtain} and \eqref{thus_obtain_2} are in each case the same function applied to the fraction $a/N$ resp.\ $a^*/N$. Since $a \mapsto a^*$ is a bijective map from the set $\{a \in \mathbb{Z}_N^* \, : \, a \leq N/2\}$ onto itself, summing over $a$ leads to
\begin{equation}\label{Sfupperlowerbound}
\sum_{\substack{a \in \mathbb{Z}_N^*,\\a \leq N/2}} S_{f,\eta,\theta} \left( \frac{a}{N} \right) \le 2 \sum_{\substack{a \in \mathbb{Z}_N^*,\\a \leq N/2}}~ \sum_{\substack{i=1 \\ q_{i-1}<\sqrt{N}}}^r \mathds{1}_{\{\eta \leq a_i \le \theta \}} f(a_i)
\end{equation}
and
\begin{equation} \label{Sfupperlowerbound2}  \begin{split}
\sum_{\substack{a \in \mathbb{Z}_N^*,\\a \leq N/2}} S_{f,\eta,\theta} \left( \frac{a}{N} \right) & \ge  2 \sum_{\substack{a \in \mathbb{Z}_N^*,\\a \leq N/2}} ~\sum_{\substack{i=1 \\ q_{i-1}<\sqrt{N}}}^r \mathds{1}_{\{\eta \leq a_i \le \theta \}} f(a_i) \\
& \quad  -  \sum_{\substack{a \in \mathbb{Z}_N^*, \\ a \le N/2}} \sum_{\substack{i=1 \\ \frac{\sqrt{N}}{\theta +2} < q_{i-1} < \sqrt{N}}}^r \mathds{1}_{\{\eta \leq a_i \le \theta \}} f(a_i). \end{split}
\end{equation}

Consider now $a \in \mathbb{Z}_N^*$ with $a > N/2$ and with continued fraction expansion $a/N=[0;a_1,a_2,\ldots, a_r]$. In this case we have $a_1=1$, and we define $a^*$ by 
$$
\frac{a^*}{N} = [0;1,a_r-1, a_{r-1},a_{r-2}, \dots, a_3, a_2+1].
$$
Then $a \mapsto a^*$ is a bijective map from the set $\{a \in \mathbb{Z}_N^* \, : \, a > N/2\}$ onto itself. It is easy to see that the denominators of the convergents to $[0;1,a_r-1, a_{r-1}, \dots, a_3, a_2+1]$ are the same as those of $[0;a_r, a_{r-1},\dots, a_3, a_2+1]$, with a shift of the index by one position. Consequently, in the present case instead of \eqref{identity} we now have
$$
q_i(a/N) q_{r-i+1}(a^*/N) + q_{i-1}(a/N) q_{r-i}(a^*/N) = N, \qquad 2 \le i \le r-1,
$$
and at least one of the inequalities $q_{i-1}(a/N) <\sqrt{N}, q_{r-i+1}(a^*/N) < \sqrt{N}$ is true for all $1 \le i \le r$. We obtain versions of \eqref{we_can_write}--\eqref{thus_we_have_2}, where the index $r-i$ of $q_{r-i}(a^*/N)$ is now replaced by $r-i+1$.

In the case $a \leq N/2$ we had the simple symmetry relation $a_i^* = a_{r-i+1}$, $1 \le i \le r$, which gave \eqref{symmetry}. Now we have the slightly more complicated relations
$$
a_1^* = 1, \qquad a_2^* = a_r - 1, \qquad a^*_r = a_2 +1, 
$$
and
$$
a_{i}^* = a_{r-i+2}, \qquad 3 \leq i \leq r-1.
$$
Thus instead of \eqref{symmetry} we now have, when re-indexing the sum by $i \mapsto r-i+2$, 
\begin{equation*} \begin{split} \sum_{\substack{i=1 \\ q_{r-i+1}(a^*/N) < \sqrt{N}}}^r \mathds{1}_{\{\eta \leq a_i \le \theta \}} f(a_i) & = \sum_{\substack{i=2 \\ q_{i-1}(a^*/N) < \sqrt{N}}}^r \mathds{1}_{\{\eta \leq a_{r-i+2} \le \theta \}} f(a_{r-i+2}) \\ & = \mathds{1}_{\{\eta \leq a_2^* +1 \le \theta \}} f(a_2^* + 1) 
+  \sum_{\substack{i=3 \\ q_{i-1}(a^*/N) < \sqrt{N}}}^{r-1} \mathds{1}_{\{\eta \leq a_i^* \le \theta \}} f(a_i^*) . \end{split} \end{equation*}
Here we used the fact that the term for $i=1$ on the left-hand side can be omitted, since $q_r (a^*/N) < \sqrt{N}$ is impossible. We also used the fact that the term for $i=r$ on the right-hand side is actually zero, since $q_{r-1}(a^*/N) < \sqrt{N}$ and $a_2 \le \theta$ cannot be both satisfied. Indeed, if both these conditions were true, then this would imply
\[ N = q_r(a^*/N) \leq (a_2 + 2) q_{r-1}(a^*/N) \le (\theta+2) \sqrt{N}, \]
in contradiction to our assumption that $\sqrt{N}>\theta +2$. 

Consequently, for all $a \in \mathbb{Z}_N^*$, $a > N/2$,
\begin{equation}\label{thus_obtain_B}
\begin{split} & S_{f,\eta,\theta} \left( \frac{a}{N} \right) \\
\le &\sum_{\substack{i=1 \\ q_{i-1}(a/N)<\sqrt{N}}}^r \mathds{1}_{\{ \eta \leq  a_i \le \theta \}} f(a_i) + \mathds{1}_{\{\eta \leq a_2^* +1 \le \theta \}} f(a_2^* + 1) + \sum_{\substack{i=3 \\ q_{i-1}(a^*/N) < \sqrt{N}}}^{r-1} \mathds{1}_{\{\eta \leq a_i^* \le \theta \}} f(a_i^*) \\
= &\sum_{\substack{i=1 \\ q_{i-1}(a/N)<\sqrt{N}}}^r \mathds{1}_{\{ \eta \leq  a_i \le \theta \}} f(a_i) + \sum_{\substack{i=2 \\ q_{i-1}(a^*/N) < \sqrt{N}}}^{r-1} \mathds{1}_{\{\eta \leq a_i^* \le \theta \}} f(a_i^*) \\ & \quad + \mathds{1}_{\{ a_2^* = \eta -1 \}} f(\eta) + \mathds{1}_{\{ \eta \le a_2^* \le \theta -1 \}} (f(a_2^*+1)-f(a_2^*)) - \mathds{1}_{\{ a_2^* = \theta \}} f(\theta) . \end{split}
\end{equation}
We now sum the previous formula over all $a \in \mathbb{Z}_N^*$, $a>N/2$. Since $a_1^*=1$, having $a_2^*=m$ means that $a^*/N$ lies in the interval $[\frac{1}{1+1/m}, \frac{1}{1+1/(m+1)}]$ of length $\frac{1}{(m+1)(m+2)}$. The number of such $a \in \mathbb{Z}_N^*$ is at most $\frac{N}{(m+1)(m+2)}+1 \le \frac{2N}{(m+1)(m+2)}$. Hence
\[ \begin{split} \sum_{\substack{a \in \mathbb{Z}_N^*,\\ a>N/2}} \big( \mathds{1}_{\{ a_2^* = \eta -1 \}} f(\eta) + &\mathds{1}_{\{ \eta \le a_2^* \le \theta -1 \}} (f(a_2^*+1)-f(a_2^*)) - \mathds{1}_{\{ a_2^* = \theta \}} f(\theta) \big) \\ &\le 2 N \left( \frac{f(\eta)}{\eta (\eta +1)} + \sum_{\eta \le m \le \theta -1} \frac{f(m+1)-f(m)}{(m+1)(m+2)} \right) \le 2 N \sum_{m=\eta}^{\theta} \frac{f(m)}{m^2}, \end{split} \]
and \eqref{thus_obtain_B} leads to
\begin{equation}\label{Sfupperlowerbound_B}
\sum_{\substack{a \in \mathbb{Z}_N^*,\\a > N/2}} S_{f,\eta,\theta} \left( \frac{a}{N} \right) \le 2 \sum_{\substack{a \in \mathbb{Z}_N^*,\\a > N/2}} ~\sum_{\substack{i=1 \\ q_{i-1}<\sqrt{N}}}^r \mathds{1}_{\{\eta \leq  a_i \le \theta \}} f(a_i) + 2 N \sum_{m=\eta}^{\theta} \frac{f(m)}{m^2} .
\end{equation}

As a lower bound, by re-indexing we similarly obtain
$$
\sum_{\substack{i=1 \\ q_{r-i+1}(a^*/N) < \sqrt{N}}}^r \mathds{1}_{\{\eta \leq a_i \le \theta \}} f(a_i) \geq \sum_{\substack{i=1 \\ 1 < q_{i-1}(a^*/N) < \sqrt{N}}}^r \mathds{1}_{\{\eta \leq a_i^* \le \theta \}} f(a_i^*).
$$
Here we used the fact that the terms for $i=1,2,r$ on the right-hand side are zero, the latter as a consequence of our assumption $\sqrt{N}>\theta + 2$. Consequently, summing over all $a \in \mathbb{Z}_N^*$, $a > N/2$ yields
\begin{equation} \label{Sfupperlowerbound_B_2} \begin{split}
\sum_{\substack{a \in \mathbb{Z}_N^*,\\a > N/2}} S_{f,\eta,\theta} \left( \frac{a}{N} \right)  & \ge 2 \sum_{\substack{a \in \mathbb{Z}_N^*,\\a > N/2}}~ \sum_{\substack{i=1 \\ 1 < q_{i-1}<\sqrt{N}}}^r \mathds{1}_{\{\eta \leq a_i \le \theta \}} f(a_i) \\ & \quad  -  \sum_{\substack{a \in \mathbb{Z}_N^*, \\ a>N/2}} \sum_{\substack{i=1 \\ \frac{\sqrt{N}}{\theta +2} < q_{i-1} < \sqrt{N}}}^r \mathds{1}_{\{\eta \leq a_i \le \theta \}} f(a_i). \end{split}
\end{equation} 

Combining \eqref{Sfupperlowerbound}, \eqref{Sfupperlowerbound2}, \eqref{Sfupperlowerbound_B}, \eqref{Sfupperlowerbound_B_2} and the identity \eqref{weightfunctionidentity}, we finally obtain
\begin{equation}\label{upper_weightfct} \begin{split} \sum_{\substack{a \in \mathbb{Z}_N^*}} S_{f,\eta,\theta} \left( \frac{a}{N} \right) &\le 2 \sum_{\substack{a \in \mathbb{Z}_N^*}} \sum_{\substack{i=1 \\ q_{i-1}<\sqrt{N}}}^r \mathds{1}_{\{\eta \leq a_i \le \theta \}} f(a_i) + 2N \sum_{m=\eta}^{\theta} \frac{f(m)}{m^2} \\ &= 2 \sum_{1 \le k < \sqrt{N}} \sum_{a \in \mathbb{Z}_N^*} \sum_{b \in \mathbb{Z}_k^*} w_{f,\eta, \theta} \left( \frac{b}{k}, \frac{a}{N} \right) + 2N \sum_{m=\eta}^{\theta} \frac{f(m)}{m^2} \end{split} \end{equation}
and
\[ \begin{split} \sum_{\substack{a \in \mathbb{Z}_N^*}} S_{f,\eta,\theta} \left( \frac{a}{N} \right) &\ge 2 \sum_{\substack{a \in \mathbb{Z}_N^*}} \sum_{\substack{i=1 \\1 < q_{i-1}<\sqrt{N}}}^r \mathds{1}_{\{\eta \leq a_i \le \theta \}} f(a_i) -  \sum_{a \in \mathbb{Z}_N^*} \sum_{\substack{i=1 \\ \frac{\sqrt{N}}{\theta +2} < q_{i-1} < \sqrt{N}}}^r \mathds{1}_{\{\eta \leq a_i \le \theta \}} f(a_i) \\ &= 2 \sum_{1<k<\sqrt{N}} \sum_{a \in \mathbb{Z}_N^*} \sum_{b \in \mathbb{Z}_k^*} w_{f,\eta, \theta} \left( \frac{b}{k}, \frac{a}{N} \right) - \sum_{\frac{\sqrt{N}}{\theta +2}<k<\sqrt{N}} \sum_{a \in \mathbb{Z}_N^*} \sum_{b \in \mathbb{Z}_k^*} w_{f,\eta, \theta} \left( \frac{b}{k}, \frac{a}{N} \right) . \end{split} \]
It remains to show that the term for $k=1$ is negligible. This is easily seen from the fact that there are $\ll N/m^2$ fractions $a/N$ which fall into $I(1,m)$ or $I'(1,m)$, and thus
\begin{equation}\label{k=1_estimate}  \sum_{a \in \mathbb{Z}_N^*} w_{f,\eta, \theta} \left( 1,\frac{a}{N} \right) \ll N \sum_{m=\eta}^{\theta} \frac{f(m)}{m^2} . \end{equation}
\end{proof}

\begin{lemma} \label{lemma_weight_square} Assume that $\sqrt{N} > \theta+2$. For any integer $1 \leq R \le \sqrt{N}$,
\[ \begin{split} \sum_{a \in \mathbb{Z}_N^*} S_{f,\eta,\theta} \left( \frac{a}{N} \right)^2 & \leq 8 \sum_{1 \le k < \sqrt{N}/R} \sum_{a \in \mathbb{Z}_N^*} \sum_{b \in \mathbb{Z}_k^*} w_{f,\eta,\theta} \left( \frac{b}{k} , \frac{a}{N} \right) S_{f,\eta,\theta} \left( \frac{b}{k} \right)  \\
& \quad +12 \sum_{a \in \mathbb{Z}_N^*} S_{f^2,\eta,\theta} \left( \frac{a}{N} \right) \\
& \quad +8 \left( \sum_{a \in \mathbb{Z}_N^*} S_{f,\eta,\theta} \left(\frac{a}{N} \right)^2 \right)^{1/2}  \left(\sum_{\sqrt{N}/R  \leq k <\sqrt{N}} \sum_{a \in \mathbb{Z}_N^*} \sum_{b \in \mathbb{Z}_k^*} w_{f^2,\eta,\theta} \left( \frac{b}{k} , \frac{a}{N} \right) \right)^{1/2} \\
& \quad +24 f(R) (1+\log R) \sum_{a \in  \mathbb{Z}_N^*} S_{f,\eta,\theta} \left( \frac{a}{N} \right) \\
& \quad + 4 \left( \sum_{a \in \mathbb{Z}_N^*} S_{f,\eta,\theta} \left( \frac{a}{N} \right)^2 \right)^{1/2} \Xi_{f,\eta, \theta}^{1/2} N^{1/2} + \Xi_{f,\eta, \theta} N
\end{split} \]
with
\begin{equation}\label{Xidef}
\Xi_{f,\eta, \theta} = \frac{2 f(\eta)^2}{\eta (\eta +1)} + \sum_{\eta \le m \le \theta -1} \frac{2 (f(m+1)-f(m))^2}{(m+1)(m+2)} + \frac{2 f(\theta)^2}{(\theta +1)(\theta +2)} .
\end{equation}
\end{lemma}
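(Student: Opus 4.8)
The plan is to reduce the estimate to a bound on $\sum_{a \in \mathbb{Z}_N^*} A(a/N)^2$, where
\[ A\left(\frac{a}{N}\right) := \sum_{\substack{1 \le i \le r \\ q_{i-1}(a/N) < \sqrt{N}}} \mathds{1}_{\{\eta \le a_i \le \theta\}} f(a_i) = \sum_{1 \le k < \sqrt{N}} \sum_{b \in \mathbb{Z}_k^*} w_{f,\eta,\theta}\left(\frac{b}{k}, \frac{a}{N}\right) \le S_{f,\eta,\theta}\left(\frac{a}{N}\right), \]
the middle equality being \eqref{weightfunctionidentity}. First I would rerun the reflection argument from the proof of Lemma~\ref{lemma_weightfunctions} (both for $a \le N/2$ and for $a > N/2$, with the same involutions $a \mapsto a^*$): inequalities \eqref{thus_obtain} and \eqref{thus_obtain_B} give the pointwise bound $S_{f,\eta,\theta}(a/N) \le A(a/N) + A(a^*/N) + E(a)$ with $E(a) = 0$ for $a \le N/2$ and $0 \le E(a) \le \mathds{1}_{\{a_2^* = \eta-1\}} f(\eta) + \mathds{1}_{\{\eta \le a_2^* \le \theta-1\}}(f(a_2^*+1) - f(a_2^*))$ for $a > N/2$, where for the latter we simply replaced the error in \eqref{thus_obtain_B} by its positive part. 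The interval-counting bound $\#\{a \in \mathbb{Z}_N^* : a > N/2,~ a_2^* = m\} \le 2N/((m+1)(m+2))$ then yields $\sum_a E(a)^2 \le \Xi_{f,\eta,\theta} N$. Squaring, applying Cauchy--Schwarz, and using both the bijectivity of $a \mapsto a^*$ (so that $\sum_a A(a^*/N)^2 = \sum_a A(a/N)^2$) and $\sum_a A(a/N)^2 \le X := \sum_a S_{f,\eta,\theta}(a/N)^2$, I obtain
\[ X \le 4 \sum_{a \in \mathbb{Z}_N^*} A\left(\frac{a}{N}\right)^2 + 4 X^{1/2} \Xi_{f,\eta,\theta}^{1/2} N^{1/2} + \Xi_{f,\eta,\theta} N, \]
which accounts for the last two terms of the lemma, so it remains to estimate $\sum_a A(a/N)^2$.

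Next I would expand the square. The diagonal part of $A(a/N)^2$ is at most $S_{f^2,\eta,\theta}(a/N)$, and since $q_{i-1} < q_{i'-1}$ for $i < i'$, the off-diagonal part equals $2 \sum_{i' : q_{i'-1}(a/N) < \sqrt{N}} \mathds{1}_{\{\eta \le a_{i'} \le \theta\}} f(a_{i'}) \cdot \big( \sum_{i < i'} \mathds{1}_{\{\eta \le a_i \le \theta\}} f(a_i) \big)$. Here $\mathds{1}_{\{\eta \le a_{i'} \le \theta\}} f(a_{i'})$, as a function of $a/N$, is exactly $w_{f,\eta,\theta}(b/k, a/N)$ for $b/k = p_{i'-1}(a/N)/q_{i'-1}(a/N)$, while $\sum_{i < i'} \mathds{1}_{\{\eta \le a_i \le \theta\}} f(a_i)$ equals $S_{f,\eta,\theta}(b/k)$ up to a boundary correction that is nonzero only when $a_{i'-1} = 1$ --- in which case the canonical continued fraction of $b/k$ merges the partial quotients $a_{i'-2}$ and $a_{i'-1}$ into $a_{i'-2}+1$, so the correction is bounded in absolute value by $\mathds{1}_{\{\eta \le a_{i'-2} \le \theta\}} f(a_{i'-2}) + f(1) + \mathds{1}_{\{\eta \le a_{i'-2}+1 \le \theta\}} f(a_{i'-2}+1)$. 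Summing over $i'$ via \eqref{weightfunctionidentity} gives
\[ \sum_a A\left(\frac{a}{N}\right)^2 \le \sum_a S_{f^2,\eta,\theta}\left(\frac{a}{N}\right) + 2 \sum_{1 \le k < \sqrt{N}} \sum_a \sum_{b \in \mathbb{Z}_k^*} w_{f,\eta,\theta}\left(\frac{b}{k}, \frac{a}{N}\right) S_{f,\eta,\theta}\left(\frac{b}{k}\right) + \mathrm{Corr}. \]
In $\mathrm{Corr}$ each summand is a product of the weight $\mathds{1}_{\{\eta \le a_{i'} \le \theta\}} f(a_{i'})$ at index $i'$ with a weight of the form $\mathds{1}_{\{\ldots\}} f(a_{i'-2})$ at index $i'-2$ (plus a harmless $f(1)$ factor), so applying $2xy \le x^2 + y^2$ to this pair and noting $\sum_{i'} \mathds{1}_{\{\eta \le a_{i'}\le\theta\}} f(a_{i'})^2 = \sum_{i'} \mathds{1}_{\{\eta \le a_{i'-2}\le\theta\}} f(a_{i'-2})^2 = S_{f^2,\eta,\theta}(a/N)$ (shift of index), one gets $\mathrm{Corr} \ll \sum_a S_{f^2,\eta,\theta}(a/N) + f(1) \sum_a S_{f,\eta,\theta}(a/N)$, which is absorbed into the $S_{f^2,\eta,\theta}$-term and the $f(R)(1+\log R)$-term of the lemma (since $f(R) \ge f(1)$ and $R \ge 1$). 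Crucially, treating the correction as a weight at index $i'-2$, rather than bounding it by the modulus of continuity of $f$ and summing independently, is what avoids a spurious factor $\log N$ here.

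Finally I would split the $k$-summation at $\sqrt{N}/R$. The range $1 \le k < \sqrt{N}/R$ reproduces, after the factor $4$ from the first step, the main term $8 \sum_{1 \le k < \sqrt{N}/R} \sum_a \sum_b w_{f,\eta,\theta}(b/k, a/N) S_{f,\eta,\theta}(b/k)$. For the medium range $\sqrt{N}/R \le k < \sqrt{N}$, I would use $S_{f,\eta,\theta}(b/k) \le S_{f,\eta,\theta}(a/N)$ (up to a further merging correction of the same type, again absorbed as above) and split the contributing indices $i'$ --- those with $\sqrt{N}/R \le q_{i'-1}(a/N) < \sqrt{N}$ --- according to whether $a_{i'} \le R$ or $a_{i'} > R$. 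Since $q_{m+2}(a/N) \ge 2 q_m(a/N)$, there are at most $O(1 + \log R)$ such indices, so the contribution of those with $a_{i'} \le R$, where $f(a_{i'}) \le f(R)$, is $\ll f(R)(1 + \log R) \sum_a S_{f,\eta,\theta}(a/N)$. On the other hand, a partial quotient $a_{i'} > R$ attached to a convergent denominator $q_{i'-1}(a/N) \ge \sqrt{N}/R$ already forces $q_{i'}(a/N) > \sqrt{N}$, so for each fixed $a$ there is at most one index $i'$ in the medium range with $a_{i'} > R$; hence the corresponding partial sum of $\mathds{1}_{\{\eta \le a_{i'} \le \theta\}} f(a_{i'})$ is at most $\big( \sum_{\sqrt{N}/R \le k < \sqrt{N}} \sum_b w_{f^2,\eta,\theta}(b/k, a/N) \big)^{1/2}$. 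Applying Cauchy--Schwarz in $a$ and bounding the remaining factor $S_{f,\eta,\theta}(a/N)$ by $X^{1/2}$ after the outer sum produces the term $8 X^{1/2} \big( \sum_{\sqrt{N}/R \le k < \sqrt{N}} \sum_a \sum_b w_{f^2,\eta,\theta}(b/k, a/N) \big)^{1/2}$; collecting everything and tracking the numerical constants gives the stated inequality.

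The step I expect to be the main obstacle is the bookkeeping of the boundary corrections --- those coming from the reflection $a \mapsto a^*$ in the range $a > N/2$, and those caused by the merging of a partial quotient equal to $1$ with its predecessor when passing to the continued fraction of a convergent --- and in particular the verification that they remain controlled by $\sum_a S_{f^2,\eta,\theta}(a/N)$ and $\Xi_{f,\eta,\theta} N$ (which is exactly why $\Xi_{f,\eta,\theta}$ has its precise form, with the $(f(m+1)-f(m))^2$ weights). The other delicate point is the elementary observation that at most one ``large'' partial quotient can be attached to a convergent denominator in the window $[\sqrt{N}/R, \sqrt{N})$, which is what keeps the $w_{f^2,\eta,\theta}$-term free of an extra logarithmic factor.
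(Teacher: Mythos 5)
Your overall blueprint is exactly the paper's: reduce to the restricted sum $A(a/N)=\sum_{q_{i-1}<\sqrt{N}}\mathds{1}_{\{\eta\le a_i\le\theta\}}f(a_i)$ via the reflection $a\mapsto a^*$, control the reflection error by $\Xi_{f,\eta,\theta}N$, split the square into a diagonal part (bounded by $S_{f^2,\eta,\theta}$) and off-diagonal terms indexed by the larger index $i'$, identify the outer factor with $w_{f,\eta,\theta}(p_{i'-1}/q_{i'-1},a/N)$, split the range of $k=q_{i'-1}$ at $\sqrt{N}/R$, and in the medium range separate $a_{i'}\le R$ (at most $O(1+\log R)$ indices) from $a_{i'}>R$ (at most one index, Cauchy--Schwarz in $a$). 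Your first step is a cosmetic variant: you take the nonnegative part of the paper's $\xi_{f,\eta,\theta}$ rather than applying Cauchy--Schwarz directly to the signed quantity; both give the same final terms.

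However, there is a genuine (though easily repaired) gap in your treatment of the boundary correction when $a_{i'-1}=1$. You bound the discrepancy between $\sum_{i<i'}\mathds{1}_{\{\eta\le a_i\le\theta\}}f(a_i)$ and $S_{f,\eta,\theta}(p_{i'-1}/q_{i'-1})$ \emph{in absolute value} by
\[
\mathds{1}_{\{\eta\le a_{i'-2}\le\theta\}}f(a_{i'-2})+f(1)+\mathds{1}_{\{\eta\le a_{i'-2}+1\le\theta\}}f(a_{i'-2}+1),
\]
and then absorb each piece via $2xy\le x^2+y^2$. The first two pieces re-index to $S_{f^2,\eta,\theta}(a/N)$ and $f(1)S_{f,\eta,\theta}(a/N)$ as you say, but the third does not: after $2xy\le x^2+y^2$ you face $\sum_{j}\mathds{1}_{\{a_{j+1}=1\}}\mathds{1}_{\{\eta\le a_j+1\le\theta\}}f(a_j+1)^2$, and when $a_j=\eta-1$ this contributes $f(\eta)^2$ for each such index $j$ but contributes nothing to $S_{f^2,\eta,\theta}(a/N)$ (since $a_j=\eta-1$ is outside the window). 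There can be $\gg\log N$ such indices, so this cannot be absorbed into $12\sum_a S_{f^2,\eta,\theta}(a/N)$ once $\eta>1$, which is precisely the regime used in the proof of Theorem~\ref{Stheorem}. The fix is to use the \emph{one-sided} estimate, as the paper does in \eqref{almost_Sbk}: since you only need an upper bound on the inner sum, the monotonicity of $f$ gives
\[
\sum_{i<i'}\mathds{1}_{\{\eta\le a_i\le\theta\}}f(a_i)\le S_{f,\eta,\theta}\!\left(\frac{p_{i'-1}}{q_{i'-1}}\right)+f(1)+\mathds{1}_{\{a_{i'-2}=\theta\}}f(\theta),
\]
and the residual term $\mathds{1}_{\{a_{i'-2}=\theta\}}f(\theta)$ \emph{does} re-index cleanly to $S_{f^2,\eta,\theta}(a/N)$. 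With that substitution your argument goes through and matches the paper. A similar remark applies to your later parenthetical ``up to a further merging correction'' in the medium-$k$ range; the paper sidesteps this entirely by bounding $\sum_{j<i}\mathds{1}_{\{\eta\le a_j\le\theta\}}f(a_j)\le S_{f,\eta,\theta}(a/N)$ there, rather than detouring through $S_{f,\eta,\theta}(b/k)$.
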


\begin{proof} Given $a \in \mathbb{Z}_N^*$, let $a^*$ and $a_i^*$, $1 \le i \le r$ be as in the proof of Lemma \ref{lemma_weightfunctions}; recall that the definition depends on whether $a \le N/2$ or $a>N/2$. By \eqref{thus_obtain} and \eqref{thus_obtain_B}, for all $a \in \mathbb{Z}_N^*$ we have
\[ S_{f,\eta,\theta} \left( \frac{a}{N} \right) \le \sum_{\substack{i=1 \\ q_{i-1}(a/N)<\sqrt{N}}}^r \mathds{1}_{\{\eta \leq a_i \le \theta \}} f(a_i) + \sum_{\substack{i=1 \\ q_{i-1}(a^*/N) < \sqrt{N}}}^r \mathds{1}_{\{\eta \leq a_i^* \le \theta \}} f(a_i^*) + \xi_{f,\eta, \theta} \left( \frac{a}{N} \right) \]
with
\[ \xi_{f,\eta, \theta} \left( \frac{a}{N} \right) := \mathds{1}_{\{ a>N/2 \}} \left( \mathds{1}_{\{ a_2^* =\eta-1 \}} f(\eta) + \mathds{1}_{\{ \eta \le a_2^* \le \theta -1 \}} (f(a_2^*+1) - f(a_2^*)) - \mathds{1}_{\{ a_2^* =\theta \}} f(\theta) \right) . \]
Hence
\[ \begin{split} S_{f,\eta,\theta} \left( \frac{a}{N} \right)^2 & \leq  2 \Bigg( \sum_{\substack{i=1 \\ q_{i-1}(a/N)<\sqrt{N}}}^r \mathds{1}_{\{\eta \leq a_i \le \theta \}} f(a_i) \Bigg)^2 + 2 \Bigg( \sum_{\substack{i=1 \\ q_{i-1}(a^*/N) < \sqrt{N}}}^r \mathds{1}_{\{\eta \leq a_i^* \le \theta \}} f(a_i^*) \Bigg)^2  \\
& \quad + 2 S_{f,\eta,\theta} \left( \frac{a}{N} \right) \xi_{f,\eta, \theta} \left( \frac{a}{N} \right) + 2 S_{f,\eta,\theta} \left( \frac{a^*}{N} \right) \xi_{f,\eta, \theta} \left( \frac{a}{N} \right) +  \xi_{f,\eta, \theta} \left( \frac{a}{N} \right)^2 , \end{split} \]
and after summing over all $a \in \mathbb{Z}_N^*$,
\begin{equation}\label{Ssquareupperbound}
\begin{split} \sum_{a \in \mathbb{Z}_N^*} S_{f,\eta,\theta} \left( \frac{a}{N} \right)^2 & \le  4 \sum_{a \in \mathbb{Z}_N^*} \Bigg( \sum_{\substack{i=1 \\ q_{i-1}(a/N)<\sqrt{N}}}^r \mathds{1}_{\{\eta \leq a_i \le \theta \}} f(a_i) \Bigg)^2 \\ & \quad + 2 \sum_{a \in \mathbb{Z}_N^*} S_{f,\eta,\theta} \left( \frac{a}{N} \right) \xi_{f,\eta, \theta} \left( \frac{a}{N} \right) \\
& \quad +2 \sum_{a \in \mathbb{Z}_N^*} S_{f,\eta,\theta} \left( \frac{a^*}{N} \right) \xi_{f,\eta, \theta} \left( \frac{a}{N} \right) + \sum_{a \in \mathbb{Z}_N^*} \xi_{f,\eta, \theta} \left( \frac{a}{N} \right)^2 . \end{split}
\end{equation}

We first estimate the last three sums on the right-hand side of \eqref{Ssquareupperbound}. Since
\[  \xi_{f,\eta, \theta} \left( \frac{a}{N} \right)^2 = \mathds{1}_{\{ a>N/2 \}} \left( \mathds{1}_{\{ a_2^* =\eta-1 \}} f(\eta)^2 + \mathds{1}_{\{ \eta \le a_2^* \le \theta -1 \}} (f(a_2^*+1) - f(a_2^*))^2 + \mathds{1}_{\{ a_2^* =\theta \}} f(\theta)^2 \right) , \]
following the steps in the proof of Lemma \ref{lemma_weightfunctions} leads to
\[ \sum_{a \in \mathbb{Z}_N^*} \xi_{f,\eta, \theta} \left( \frac{a}{N} \right)^2 \le 2N \left( \frac{f(\eta)^2}{\eta (\eta +1)} + \sum_{\eta \le m \le \theta -1} \frac{(f(m+1)-f(m))^2}{(m+1)(m+2)} + \frac{f(\theta)^2}{(\theta +1)(\theta +2)} \right), \]
where by \eqref{Xidef} the right-hand side is $\Xi_{f,\eta, \theta} N$. Using the Cauchy--Schwarz inequality we obtain
\[ 2 \sum_{a \in \mathbb{Z}_N^*} S_{f,\eta,\theta} \left( \frac{a}{N} \right) \xi_{f,\eta, \theta} \left( \frac{a}{N} \right) \le 2 \left( \sum_{a \in \mathbb{Z}_N^*} S_{f,\eta,\theta} \left( \frac{a}{N} \right)^2 \right)^{1/2} \Xi_{f,\eta, \theta}^{1/2} N^{1/2}, \]
and since $a \mapsto a^*$ is a bijection of $\mathbb{Z}_N^*$ the same estimate holds if $S_{f,\eta,\theta} \left( \frac{a}{N} \right)$ is replaced by $S_{f,\eta,\theta} \left( \frac{a^*}{N} \right)$.

It remains to estimate the first sum on the right-hand side of \eqref{Ssquareupperbound}. Fix $a \in \mathbb{Z}_N^*$. The diagonal terms are easily estimated as
\[ \sum_{\substack{i=1 \\ q_{i-1}< \sqrt{N}}}^r \mathds{1}_{ \{\eta \leq a_i \le \theta \}} f(a_i)^2 \le S_{f^2, \eta, \theta} \left( \frac{a}{N} \right) . \]
Let us decompose the off-diagonal terms as
\begin{equation}\label{offdiag-decomposition}
\begin{split} \sum_{\substack{1 \le j<i \le r \\ q_{i-1}<\sqrt{N}}} \mathds{1}_{\{\eta \leq a_j \le \theta \}} f(a_j) &\mathds{1}_{\{\eta \leq a_i \le \theta \}} f(a_i) = \\  &\sum_{\substack{1 \le j<i \le r \\ q_{i-1}<\sqrt{N}/R}} \mathds{1}_{\{\eta \leq a_j \le \theta \}} f(a_j) \mathds{1}_{\{\eta \leq a_i \le \theta \}} f(a_i) \\ &+ \sum_{\substack{1 \le j<i \le r \\ \sqrt{N}/R \leq q_{i-1}<\sqrt{N}}} \mathds{1}_{\{\eta \leq a_j \le \theta \}} f(a_j) \mathds{1}_{\{\eta \le a_i \le \theta \}} \mathds{1}_{\{ a_i \le R \}} f(a_i) \\ &+ \sum_{\substack{1 \le j<i \le r \\ \sqrt{N}/R \leq q_{i-1}<\sqrt{N}}} \mathds{1}_{\{\eta \leq a_j \le \theta \}} f(a_j) \mathds{1}_{\{\eta \le a_i \le \theta \}} \mathds{1}_{\{ a_i>R \}} f(a_i) . \end{split}
\end{equation}
Consider the first sum on the right-hand side of \eqref{offdiag-decomposition}. For any $2 \le i \le r$, the canonical continued fraction expansion of the convergent $p_{i-1}/q_{i-1}$ is $[0;a_1,a_2,\ldots, a_{i-1}]$ if $a_{i-1}>1$, and $[0;a_1,a_2,\ldots, a_{i-2}+1]$ if $a_{i-1}=1$. Therefore
\begin{equation}\label{almost_Sbk} \sum_{1 \le j<i} \mathds{1}_{\{ \eta \le a_j \le \theta \}} f(a_j) \le S_{f,\eta,\theta} \left( \frac{p_{i-1}}{q_{i-1}} \right) + f(1) + \mathds{1}_{\{ a_{i-2}=\theta \}} f(\theta ) , \end{equation}
and using Cauchy--Schwarz and the fact that $\sum_{i=2}^r \mathds{1}_{\{ a_{i-2} = \theta \}} f(\theta )^2 \le S_{f^2, \eta, \theta} (a/N)$, we obtain
\[ \begin{split} \sum_{\substack{1 \le j<i \le r \\ q_{i-1}<\sqrt{N}/R}} \mathds{1}_{\{\eta \leq a_j \le \theta \}} f(a_j) \mathds{1}_{\{\eta \leq a_i \le \theta \}} f(a_i) \le & \sum_{\substack{2 \le i \le r \\ q_{i-1}<\sqrt{N}/R}} \mathds{1}_{\{ \eta \le a_i \le \theta \}} f(a_i) S_{f,\eta, \theta} \left( \frac{p_{i-1}}{q_{i-1}} \right) \\ &+ f(1) S_{f,\eta, \theta} \left( \frac{a}{N} \right) + S_{f^2,\eta, \theta} \left( \frac{a}{N} \right) \\ \le &\sum_{1 \le k < \sqrt{N}/R} \sum_{b \in \mathbb{Z}_k^*} w_{f,\eta,\theta} \left( \frac{b}{k} , \frac{a}{N} \right) S_{f,\eta, \theta} \left( \frac{b}{k} \right) \\&+f(1) S_{f,\eta, \theta} \left( \frac{a}{N} \right) + S_{f^2,\eta, \theta} \left( \frac{a}{N} \right) . \end{split} \]
The second sum on the right-hand side of \eqref{offdiag-decomposition} satisfies
\begin{equation}\label{smallai} \begin{split} 
& \sum_{\substack{1 \le j<i \le r \\ \sqrt{N}/R \leq q_{i-1}<\sqrt{N}}} \mathds{1}_{\{\eta \leq a_j \le \theta \}} f(a_j) \mathds{1}_{\{ \eta \le a_i \le \theta \}} \mathds{1}_{\{ a_i \le R \}} f(a_i)  \\ & \leq f(R) S_{f,\eta,\theta} \left( \frac{a}{N} \right) \sum_{\substack{2 \le i \le r \\ \sqrt{N}/R \leq q_{i-1}<\sqrt{N}}} 1 \\
& \leq f(R) (2+3 \log R) S_{f,\eta,\theta} \left( \frac{a}{N} \right),
\end{split}\end{equation}
where the last estimate follows from $q_{i+2} \geq q_{i+1} + q_{i} \geq 2 q_i$ for all $i$, together with $2/\log 2 \leq 3$. The third sum on the right-hand side of \eqref{offdiag-decomposition} similarly satisfies
\[ \begin{split} \sum_{\substack{1 \le j<i \le r \\ \sqrt{N}/R \leq q_{i-1}<\sqrt{N}}} \mathds{1}_{\{\eta \leq a_j \le \theta \}} f(a_j) &\mathds{1}_{\{ \eta \le a_i \le \theta \}} \mathds{1}_{\{ a_i>R \}} f(a_i) \\ &\le S_{f,\eta,\theta} \left( \frac{a}{N} \right) \sum_{\substack{2 \le i \le r \\ \sqrt{N}/R \le q_{i-1}<\sqrt{N}}} \mathds{1}_{\{ \eta \le a_i \le \theta \}} \mathds{1}_{\{ a_i>R \}} f(a_i) , \end{split} \]
where the crucial property is that the remaining sum has at most one term. The previous three formulas show that
\[ \begin{split} \Bigg( \sum_{\substack{i=1 \\ q_{i-1}<\sqrt{N}}}^r \mathds{1}_{\{\eta \leq a_i \le \theta \}} f(a_i) \Bigg)^2 & \le 2\sum_{1 \le k < \sqrt{N}/R} \sum_{b \in \mathbb{Z}_k^*} w_{f,\eta,\theta} \left( \frac{b}{k} , \frac{a}{N} \right) S_{f,\eta, \theta} \left( \frac{b}{k} \right) \\ & \quad + 3S_{f^2, \eta, \theta} \left( \frac{a}{N} \right) + 6 f(R) (1+ \log R) S_{f,\eta, \theta} \left( \frac{a}{N} \right) \\ & \quad +2S_{f,\eta,\theta} \left( \frac{a}{N} \right) \sum_{\substack{2 \le i \le r \\ \sqrt{N}/R \le q_{i-1}<\sqrt{N}}} \mathds{1}_{\{ \eta \le a_i \le \theta \}} \mathds{1}_{\{ a_i>R \}} f(a_i) . \end{split} \]
The Cauchy--Schwarz inequality together with the fact that
\[ \begin{split} \Bigg( \sum_{\substack{2 \le i \le r \\ \sqrt{N}/R \le q_{i-1}<\sqrt{N}}} \mathds{1}_{\{ \eta \le a_i \le \theta \}} \mathds{1}_{\{ a_i>R \}} f(a_i) \Bigg)^2 &= \sum_{\substack{2 \le i \le r \\ \sqrt{N}/R \le q_{i-1}<\sqrt{N}}} \mathds{1}_{\{ \eta \le a_i \le \theta \}} \mathds{1}_{\{ a_i>R \}} f(a_i)^2 \\ &\le \sum_{\sqrt{N}/R \le k<\sqrt{N}} \sum_{b \in \mathbb{Z}_k^*} w_{f^2,\eta,\theta} \left( \frac{b}{k} , \frac{a}{N} \right) \end{split} \]
immediately yield the desired upper bound for the first sum in \eqref{Ssquareupperbound}.
\end{proof}

\section{Expected value} \label{sec_expect}

In this section we use Lemma \ref{lemma_weightfunctions} to determine the expected value of $S_{f,\eta,\theta}$.
\begin{thm}\label{expectedvaluetheorem} Let $N \ge 3$ and $1 \le \eta \le \theta$ be integers, and let $f: \mathbb{N} \to [0,\infty )$ be a non-decreasing function with $f(\theta) > 0$. Let
\[ A_{f,\eta, \theta} = \frac{12}{\pi^2} \sum_{m=\eta}^{\theta} f(m) \log \left( 1 +\frac{1}{m(m+2)} \right), \quad B_{f,\eta,\theta}= A_{f,\eta,\theta} + \frac{f(\theta)}{\theta}, \quad C_{f,\eta,\theta}= \frac{5 f(\theta)}{B_{f,\eta,\theta}} . \]
Then
\begin{equation}\label{expectedupperbound}
\frac{1}{\varphi(N)} \sum_{a \in \mathbb{Z}_N^*} S_{f,\eta,\theta} \left( \frac{a}{N} \right) \ll B_{f,\eta,\theta} \log N \log \log N
\end{equation}
with an absolute implied constant. Assuming that $\sqrt{N} > \theta +2$, we also have
\begin{equation}\label{expectedasymptotics} \begin{split}
& \frac{1}{\varphi (N)} \sum_{a \in \mathbb{Z}_N^*} S_{f,\eta,\theta} \left( \frac{a}{N} \right) \\ & = A_{f,\eta,\theta} \log N + O \left( B_{f,\eta,\theta} \left( \frac{(\log (C_{f,\eta,\theta}\log N))^2}{\log \log (C_{f,\eta,\theta}\log N)} + \log \theta \right) \log \log N \right) \end{split}
\end{equation}
with an absolute implied constant.
\end{thm}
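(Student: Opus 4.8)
To prove \eqref{expectedasymptotics}, the plan is to start from Lemma~\ref{lemma_weightfunctions}, which (using $\sqrt N>\theta+2$) rewrites $\sum_{a\in\mathbb Z_N^*}S_{f,\eta,\theta}(a/N)$ as $2\sum_{k<\sqrt N}\sum_{a\in\mathbb Z_N^*}\sum_{b\in\mathbb Z_k^*}w_{f,\eta,\theta}(b/k,a/N)$ plus the explicit error terms stated there. For fixed $k$ and $b$, the function $x\mapsto w_{f,\eta,\theta}(b/k,x)$ is of bounded variation with $V\le 4f(\theta)$ and is supported on a short interval around $b/k$ (the intervals $I(b/k,m),I'(b/k,m)$, $m\ge\eta$, being adjacent, the support has length $\ll 1/(k^2\eta)$); hence Koksma's inequality (Lemma~\ref{lemma_koksma}) together with the discrepancy estimate of Lemma~\ref{lemma_Manuel} gives
\[ \sum_{a\in\mathbb Z_N^*}w_{f,\eta,\theta}\!\left(\tfrac bk,\tfrac aN\right)=\varphi(N)\int_0^1 w_{f,\eta,\theta}\!\left(\tfrac bk,x\right)\mathrm dx+(\text{error}). \]
It then remains to evaluate the integral, to sum over $b$ and $k$, and to bound the accumulated error.

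For the main term one computes, writing $p_{s-1}/q_{s-1}$ for the second-to-last convergent of $b/k=[0;b_1,\dots,b_s]$ and noting that the non-canonical expansion $[0;b_1,\dots,b_s-1,1]$ of $b/k$ has second-to-last convergent of denominator $k-q_{s-1}$,
\[ \lambda(I(b/k,m))=\frac1{((m+1)k+q_{s-1})(mk+q_{s-1})},\qquad \lambda(I'(b/k,m))=\frac1{((m+2)k-q_{s-1})((m+1)k-q_{s-1})}. \]
As $b$ runs over $\mathbb Z_k^*$ so does $q_{s-1}(b/k)$ (it is, up to sign, the inverse of $b$ modulo $k$—precisely the measure-preserving bijection of $\mathbb Z_k^*$ mentioned before Lemma~\ref{lemma_weightfunctions}), and $j\mapsto k-j$ is another bijection of $\mathbb Z_k^*$, so
\[ \sum_{b\in\mathbb Z_k^*}\bigl(\lambda(I(b/k,m))+\lambda(I'(b/k,m))\bigr)=\frac2k\sum_{j\in\mathbb Z_k^*}\left(\frac1{mk+j}-\frac1{(m+1)k+j}\right). \]
A Möbius inversion in the condition $\gcd(j,k)=1$ evaluates the inner sum as $\tfrac{\varphi(k)}{k}\log\!\bigl(1+\tfrac1{m(m+2)}\bigr)+O(\tau(k)/(m^3k))$. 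Since $\int_0^1 w_{f,\eta,\theta}(b/k,x)\,\mathrm dx=\sum_{m=\eta}^\theta f(m)(\lambda(I(b/k,m))+\lambda(I'(b/k,m)))$, summing over $b$, over $m$ weighted by $f(m)$, and over $k<\sqrt N$ using $\sum_{k<\sqrt N}\varphi(k)/k^2=\tfrac3{\pi^2}\log N+O(1)$ and $\sum_k\tau(k)/k^2=O(1)$ produces the main term $A_{f,\eta,\theta}\varphi(N)\log N$; the constant $12/\pi^2=2\cdot2\cdot\tfrac3{\pi^2}$, with the two factors $2$ coming from Lemma~\ref{lemma_weightfunctions} and from the equal contributions of the intervals $I$ and $I'$.

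The hard part is the error analysis. The purely arithmetic errors—the $O(1)$ in $\sum\varphi(k)/k^2$, the $\tau(k)$ errors from Möbius inversion, and the term $N\sum_m f(m)/m^2$ from Lemma~\ref{lemma_weightfunctions}—all become $\ll A_{f,\eta,\theta}\log\log N$ after division by $\varphi(N)$ and use of $N/\varphi(N)\ll\log\log N$, hence are absorbed; the summand $\log\theta$ in the stated error is exactly the contribution of the boundary range $\sqrt N/(\theta+2)<k<\sqrt N$ in Lemma~\ref{lemma_weightfunctions}, since $\sum_{\sqrt N/(\theta+2)<k<\sqrt N}\varphi(k)/k^2\ll\log\theta$. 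The serious errors come from Lemma~\ref{lemma_Manuel}: its relative error $O(u^{-u/2}+\log N/T)$ accumulates to $\ll A_{f,\eta,\theta}\log N\,(u^{-u/2}+\log N/T)$, which forces $T$ to be a suitable power of $\log N$ and, crucially, forces the sieve parameter $u$ to be of order $\asymp\log L/\log\log L$ with $L=C_{f,\eta,\theta}\log N$, so that $u^{-u/2}\ll 1/L=B_{f,\eta,\theta}/(5f(\theta)\log N)$ and this contribution is $\ll B_{f,\eta,\theta}$ (using $A_{f,\eta,\theta}\le B_{f,\eta,\theta}<2f(\theta)$); this is the origin of the term $(\log L)^2/\log\log L$. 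The main obstacle is the additive error $O(T^u)$ of Lemma~\ref{lemma_Manuel}: summed naively over all the intervals $I(b/k,m)$ it is ruinous, and it has to be controlled by (i) exploiting that for adjacent intervals the sieve remainders telescope, so the additive cost is only $O(f(\theta)T^u)$ per pair $(k,b)$, and (ii) peeling off the large partial quotients $m>R$ (for a suitably chosen threshold $R$) and treating that tail separately—either via the crude bound \eqref{expectedupperbound}, which is proved first and does not require $\sqrt N>\theta+2$, or via the fact that for $m$ large the interval $I(b/k,m)$ is so short that the reduced fractions it contains can be counted with an $O(1)$ error without invoking the sieve. Optimizing $u$, $T$ and $R$ then yields $O\bigl(B_{f,\eta,\theta}((\log L)^2/\log\log L+\log\theta)\log\log N\bigr)$; the finitely many small $N$ (where $\sqrt N>\theta+2$ already restricts $\theta$ to a bounded range) are handled separately using homogeneity in $f$.

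Finally, the crude upper bound \eqref{expectedupperbound} is obtained by a simpler version of the same circle of ideas, using the reflection only to reduce to $k<\sqrt N$ and the discrepancy input only with $u=1$; here the extra factor $\log\log N$ in the stated bound is precisely the slack that lets one absorb the lossy error terms, and the hypothesis $\sqrt N>\theta+2$ is not needed because for an upper bound the boundary contributions may simply be discarded (after using the reflection identity to bound $S_{f,\eta,\theta}(a/N)$ by twice the corresponding sum restricted to $q_{i-1}<\sqrt N$, up to a harmless remainder).
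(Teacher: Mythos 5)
Your overall scheme for \eqref{expectedasymptotics} matches the paper's: reduce via Lemma~\ref{lemma_weightfunctions} to sums of $w_{f,\eta,\theta}(b/k,\cdot)$ over $k<\sqrt N$, apply Koksma (Lemma~\ref{lemma_koksma}) with the discrepancy input of Lemma~\ref{lemma_Manuel}, and evaluate the integrals. Your computation of the main term is, however, genuinely different and worth noting: you evaluate $\sum_{b\in\mathbb Z_k^*}\int_0^1 w_{f,\eta,\theta}(b/k,x)\,\mathrm dx$ directly by passing to $j=q_{s-1}(b/k)$, exploiting $j\mapsto k-j$, and doing M\"obius inversion on $\gcd(j,k)=1$; the paper's Lemma~\ref{integrallemma} instead applies the discrepancy bound of Lemma~\ref{lemma_Manuel} a \emph{second} time to the point set $\{b/k:b\in\mathbb Z_k^*\}$. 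Your M\"obius route is more elementary and avoids a second invocation of the sieve, at the mild cost of carrying a divisor-function error $\tau(k)$. Both deliver the same main term $A_{f,\eta,\theta}\log N$. Likewise for \eqref{expectedupperbound}: the paper's proof does not use the reflection $a\mapsto a^*$ or Koksma at all — it directly bounds $W_{k,f}=\varphi(N)^{-1}\sum_{a}\sum_b w_{f,\eta,\theta}(b/k,a/N)$ for \emph{every} $1\le k\le N-1$ by majorizing $w$ by an even periodic function $g_k(b/k-x)$ and counting the pairs $(a,b)$ through the surjection $\mathbb Z_N\times\mathbb Z_k\to\mathbb Z_{\mathrm{lcm}(N,k)}$ — which is cleaner than the reflection-plus-$u=1$ route you sketch, and sidesteps the issue you then have to patch (dropping boundary terms when $\sqrt N\le\theta+2$).

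The genuine gap is in your handling of the additive sieve error $O(T^u)$. Your item (i) — applying Koksma to the whole function $w_{f,\eta,\theta}(b/k,\cdot)$ at once, so that the additive cost is only $O(f(\theta)T^u/\varphi(N))$ per pair $(k,b)$ — is correct and is exactly what the paper does. But even with (i), summing this over all $b\in\mathbb Z_k^*$ and over all $k<\sqrt N$ gives $\ll f(\theta)T^u\cdot N/\varphi(N)\asymp f(\theta)T^u\log\log N$, and since $T^u=e^{c(T)}$ grows faster than any power of $\log N$, this is ruinous. Your item (ii), peeling off partial quotients $m>R$, does not repair this: the $T^u$ term is independent of the interval length and hence of $m$, so restricting $m$ does not reduce the accumulated additive error. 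What actually makes the argument go through (and what you are missing) is a cutoff in the $k$-range: the paper sums the sieve asymptotics only over $k\le\sqrt N/e^{c(T)}$, so that $\sum_{k\le\sqrt N/e^{c(T)}}\varphi(k)\cdot T^u/\varphi(N)\ll N/(e^{c(T)}\varphi(N))\ll\log\log N/e^{c(T)}$ is negligible, and then bounds the remaining range $\sqrt N/e^{c(T)}<k<\sqrt N$ crudely via $W_{k,f}\ll B_{f,\eta,\theta}\log\log N/k$ (the same pointwise bound used to prove \eqref{expectedupperbound}). It is precisely this truncation — contributing both $A_{f,\eta,\theta}c(T)$ from shortening $\sum\varphi(k)/k^2$ and $B_{f,\eta,\theta}c(T)\log\log N$ from the tail — that produces the $(\log L)^2/\log\log L$ factor in the theorem's error term, not the choice of $u$ per se; you correctly identify the scaling $u\asymp\log L/\log\log L$ and $T\asymp(\log N)^2\cdot f(\theta)/B_{f,\eta,\theta}$, but without the $k$-cutoff these parameter choices cannot close the estimate.
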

\noindent Note that $C_{f,\eta,\theta} \le 5 \theta$. In particular, if $\theta \le (\log N)^C$ with some constant $C>0$, then
\[ \frac{1}{\varphi (N)} \sum_{a \in \mathbb{Z}_N^*} S_{f,\eta,\theta} \left( \frac{a}{N} \right) = A_{f,\eta,\theta} \log N + O \left( B_{f,\eta,\theta} (\log \log N)^3 \right) \]
with an implied constant depending only on $C$. We give the proof of Theorem \ref{expectedvaluetheorem} after a preparatory lemma.
\begin{lemma}\label{integrallemma} For any integer $k \ge 1$,
\[ \sum_{b \in \mathbb{Z}_k^*} \int_0^1 w_{f,\eta,\theta} \left( \frac{b}{k}, x \right) \, \mathrm{d} x = \frac{2\varphi (k)}{k^2} \sum_{m=\eta}^{\theta} f(m) \log \left( 1+\frac{1}{m(m+2)} \right) + O \left( \frac{\sum_{m=\eta}^{\theta}f(m)/m^2}{k(\log (k+1))^{100}} \right) \]
with an absolute implied constant.
\end{lemma}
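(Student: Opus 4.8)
The plan is to reduce $\int_0^1 w_{f,\eta,\theta}(b/k,x)\,\mathrm dx = \sum_{m=\eta}^{\theta} f(m)\bigl(\lambda(I(b/k,m)) + \lambda(I'(b/k,m))\bigr)$ to an elementary sum over residue classes, exploiting the reflection symmetry of continued fractions, and then to compare that sum with an integral via a sieve (M\"obius) argument. Throughout I would assume $k \ge 2$; the genuinely special case $k=1$ is dispatched directly from the explicit definitions of $I(1,m)$ and $I'(1,m)$, using $\tfrac{1}{m(m+1)} + \tfrac{1}{(m+1)(m+2)} = \tfrac{2}{m(m+2)} = 2\log\!\bigl(1+\tfrac{1}{m(m+2)}\bigr) + O(m^{-4})$ for $m\ge 2$ together with a harmless $O(f(1))$ discrepancy coming from the $m=1$ terms, which is absorbed into the error since $(\log 2)^{-100}\asymp 1$.

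First I would record the lengths of the defining intervals. For $b \in \mathbb{Z}_k^*$ with canonical expansion $b/k = [0;b_1,\dots,b_s]$, $b_s > 1$, set $\ell = \ell_b := q_{s-1}(b/k) \in \mathbb{Z}_k^*$. Since $[0;b_1,\dots,b_s,m] = \frac{mp_s + p_{s-1}}{mk + \ell}$ and $p_s q_{s-1} - p_{s-1}q_s = \pm 1$, two consecutive such fractions (for $m$ and $m+1$) differ in absolute value by $\frac{1}{(mk+\ell)((m+1)k+\ell)}$; hence $\lambda(I(b/k,m)) = g_{m,k}(\ell_b)$, where $g_{m,k}(x) := \frac{1}{(mk+x)((m+1)k+x)}$. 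Likewise, the expansion $[0;b_1,\dots,b_{s-1},b_s-1,1]$ represents $b/k$ and has convergent denominators $q_0,\dots,q_{s-1}, k-\ell, k$, so by the same computation $\lambda(I'(b/k,m)) = g_{m,k}(k-\ell_b)$.

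The key step is the pairing identity
\[ \sum_{b \in \mathbb{Z}_k^*}\bigl(\lambda(I(b/k,m)) + \lambda(I'(b/k,m))\bigr) = \sum_{b \in \mathbb{Z}_k^*}\bigl(g_{m,k}(\ell_b) + g_{m,k}(k-\ell_b)\bigr) = 2\sum_{\ell \in \mathbb{Z}_k^*} g_{m,k}(\ell). \]
To prove it I would show that for every $\ell \in \mathbb{Z}_k^*$, the number of $b \in \mathbb{Z}_k^*$ with $\ell_b = \ell$ plus the number with $\ell_b = k-\ell$ equals exactly $2$. The relation $p_s q_{s-1} - p_{s-1}q_s = \pm 1$ gives $b\,\ell_b \equiv \pm 1 \pmod k$, hence $\ell_b \equiv \pm b^{-1}\pmod k$; conversely $\ell_b \in \{\ell, k-\ell\}$ forces $b \equiv \pm \ell^{-1}\pmod k$. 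For $k \ge 3$ the two residues $\ell^{-1}, -\ell^{-1}$ are distinct elements of $\mathbb{Z}_k^*$, and each of the corresponding $b$ has $\ell_b \in \{\ell,k-\ell\}$ (a single value, so contributing to exactly one of the two counts), whence the total is $2$; the degenerate cases $k=2$ and $\ell = k-\ell$ are checked by hand. I expect this to be the main obstacle, as it is where the arithmetic of continued-fraction reciprocity enters and where the small-$k$ degeneracies have to be handled carefully.

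Finally I would evaluate $\sum_{\ell\in\mathbb{Z}_k^*} g_{m,k}(\ell)$ by M\"obius inversion: $\sum_{\ell\in\mathbb{Z}_k^*} g_{m,k}(\ell) = \sum_{d\mid k}\mu(d)\sum_{1\le j\le k/d} g_{m,k}(dj)$. Since $g_{m,k}$ is positive and decreasing, each inner sum differs from $\int_0^{k/d} g_{m,k}(dt)\,\mathrm dt = \frac{1}{dk}\log\!\bigl(1+\frac{1}{m(m+2)}\bigr)$ by at most $g_{m,k}(0) = \frac{1}{m(m+1)k^2}$; together with $\sum_{d\mid k}\mu(d)/d = \varphi(k)/k$ and $\sum_{d\mid k}1 = \tau(k)$ (the number of divisors of $k$) this yields $\sum_{\ell\in\mathbb{Z}_k^*} g_{m,k}(\ell) = \frac{\varphi(k)}{k^2}\log\!\bigl(1+\frac{1}{m(m+2)}\bigr) + O\!\bigl(\tau(k)/(m^2k^2)\bigr)$. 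Multiplying by $f(m)$, summing over $\eta\le m\le\theta$, and using the elementary bound $\tau(k)(\log(k+1))^{100}\ll k$ (trivial for $k$ in any bounded range, and immediate from $\tau(k)\le 2\sqrt{k}$ for large $k$) to recast the error as $O\!\bigl(\frac{\sum_{m=\eta}^\theta f(m)/m^2}{k(\log(k+1))^{100}}\bigr)$ completes the proof.
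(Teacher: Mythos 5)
Your proof is correct, and it departs from the paper in the last and central step. The paper, after computing the same explicit interval lengths, appeals to the bijectivity of $b\mapsto q_{s-1}(b/k)$ to rewrite the sum over $b$ as a sum over all reduced residues, and then invokes its sieve-theoretic discrepancy estimate (Lemma~\ref{lemma_Manuel}, applied with $T=(\log k)^{101}$) together with the Koksma-type inequality (Lemma~\ref{lemma_koksma}) to compare that sum with the integral, obtaining the error $\ll 1/(k(\log k)^{100})$. You instead evaluate $\sum_{\ell\in\mathbb{Z}_k^*}g_{m,k}(\ell)$ directly by M\"obius inversion and an elementary integral comparison for decreasing functions, which yields the \emph{stronger} error $\ll \tau(k)/k^2$ and then wastefully reduces to the stated $1/(k(\log k)^{100})$ via $\tau(k)(\log(k+1))^{100}\ll k$. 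This is more elementary and self-contained; the paper's route has the advantage of reusing Lemma~\ref{lemma_Manuel}, which is needed for the genuinely non-trivial averages over $a\in\mathbb{Z}_N^*$ elsewhere (equation \eqref{fixkbsum} etc.), so it keeps the machinery uniform. Two smaller remarks. First, your treatment of the pairing identity is actually \emph{more careful} than the paper's one-line claim that $b\mapsto q_{s-1}$ is a bijection of $\mathbb{Z}_k^*$: it is not a bijection (e.g.\ for $k=5$, both $b=2$ and $b=3$ have $q_{s-1}=2$), but rather a 2-to-1 map whose image $A$ satisfies $A\sqcup(k-A)=\mathbb{Z}_k^*$; your counting argument is the honest justification, and the symmetry $\ell\leftrightarrow k-\ell$ of the summand is exactly what rescues the paper's formula. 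Second, a very minor point: the identity $\tfrac{1}{m(m+1)}+\tfrac{1}{(m+1)(m+2)}=\tfrac{2}{m(m+2)}$ and the expansion $\tfrac{2}{m(m+2)}=2\log\bigl(1+\tfrac{1}{m(m+2)}\bigr)+O(m^{-4})$ both in fact hold for $m\ge 1$, so the $k=1$ case needs no special handling of the $m=1$ term; still, what you wrote is not wrong.
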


\begin{proof} The proof for $k=1$ is trivial, thus we may assume that $k \ge 2$. Fix $b \in \mathbb{Z}_k^*$, and consider the continued fraction expansion $b/k=[0;b_1,b_2,\ldots, b_s]$ with $b_s>1$. Let $p_i/q_i=[0;b_1,b_2,\ldots, b_i]$ denote the convergents. Recall the identity $p_s q_{s-1} - p_{s-1} q_s=(-1)^{s+1}$. We have
\[ \int_0^1 w_{f,\eta, \theta} \left( \frac{b}{k}, x \right) \, \mathrm{d}x = \sum_{m=\eta}^{\theta} f(m) \left( \lambda (I(b/k,m)) + \lambda(I'(b/k,m)) \right) , \]
where
\[ \begin{split} \lambda (I(b/k,m)) &= \left| [0;b_1,b_2,\ldots, b_s,m] - [0;b_1,b_2,\ldots, b_s,m+1] \right| \\ &= \left| \frac{m p_s + p_{s-1}}{m q_s +q_{s-1}} - \frac{(m+1) p_s + p_{s-1}}{(m+1) q_s +q_{s-1}} \right| \\ &= \frac{1}{(m q_s + q_{s-1})((m+1)q_s+q_{s-1})} , \end{split} \]
and
\[ \begin{split} \lambda (I'(b/k,m)) &= \left| [0;b_1,b_2,\ldots, b_{s-1}, b_s-1,1,m] - [0;b_1,b_2,\ldots, b_{s-1}, b_s-1,1,m+1] \right| \\ &= \left| \frac{m p_s + p_s-p_{s-1}}{m q_s + q_s - q_{s-1}} - \frac{(m+1) p_s + p_s-p_{s-1}}{(m+1) q_s + q_s - q_{s-1}} \right| \\ &= \frac{1}{((m+1)q_s-q_{s-1})((m+2)q_s-q_{s-1})} . \end{split} \]
Since $p_s=b$ and $q_s=k$, we have $b q_{s-1} \equiv (-1)^{s+1} \pmod{k}$, thus the map $b \mapsto q_{s-1}$ is a bijection of $\mathbb{Z}_k^*$. Therefore
\[ \begin{split} & \sum_{b \in \mathbb{Z}_k^*} \int_0^1 w_{f,\eta,\theta} \left( \frac{b}{k}, x \right) \, \mathrm{d} x \\ & = \sum_{m=\eta}^{\theta} \frac{f(m)}{k^2} \sum_{b \in \mathbb{Z}_k^*} \left( \frac{1}{(m+\frac{b}{k})(m+1+\frac{b}{k})} + \frac{1}{(m+1-\frac{b}{k})(m+2-\frac{b}{k})} \right) .  \end{split} \]
Applying Lemma \ref{lemma_Manuel} with $T=(\log k)^{101}$ and $u=2+\log k/(202 \log \log k)$ shows that the discrepancy of the set $\{ b/k \, : \, b \in \mathbb{Z}_k^* \}$ is $\ll 1/(\log k)^{100}$. Lemma \ref{lemma_koksma} thus gives
\[ \begin{split} \sum_{b \in \mathbb{Z}_k^*} &\left( \frac{1}{(m+\frac{b}{k})(m+1+\frac{b}{k})} + \frac{1}{(m+1-\frac{b}{k})(m+2-\frac{b}{k})} \right) \\ &= \varphi(k) \int_0^1 \left( \frac{1}{(m+x)(m+1+x)} + \frac{1}{(m+1-x)(m+2-x)} \right) \, \mathrm{d}x + O \left( \frac{k}{(\log k)^{100}m^2} \right) \\ &=2 \varphi (k) \log \left( 1+\frac{1}{m(m+2)} \right) + O \left( \frac{k}{(\log k)^{100}m^2} \right) , \end{split} \]
and the claim follows.
\end{proof}

\begin{proof}[Proof of Theorem \ref{expectedvaluetheorem}] Let $W_{k,f} = \frac{1}{\varphi (N)} \sum_{a \in \mathbb{Z}_N^*} \sum_{b \in \mathbb{Z}_k^*} w_{f,\eta,\theta} \left( \frac{b}{k} , \frac{a}{N} \right)$, $1 \le k \le N-1$. By the identity \eqref{weightfunctionidentity}, we have
\[ \frac{1}{\varphi (N)} \sum_{a \in \mathbb{Z}_N^*} S_{f,\eta,\theta} \left( \frac{a}{N} \right) = \sum_{k=1}^{N-1} W_{k,f} . \]
It follows from fundamental results of Diophantine approximation that all four endpoints of the intervals $I(b/k,m)$ and $I'(b/k,m)$ have mod 1 distance from $b/k$ between $1/((m+3)k^2)$ and $1/(mk^2)$. Therefore for all $x \in I(b/k,m) \cup I'(b/k,m)$, we have $1/((m+3)k^2) \le \| b/k-x \| \le 1/(mk^2)$. Let
\begin{equation} \label{g_k_def_rev} 
g_k(y) = \left\{ \begin{array}{ll} f(\theta) & \textrm{if } \| y \| \le \frac{1}{(\theta +1)k^2}, \\ f(m) & \textrm{if } \frac{1}{(m+1)k^2} <\| y \| \le \frac{1}{mk^2} \textrm{ with some integer } \eta \le m \le \theta, \\ 0 & \textrm{if } \frac{1}{\eta k^2} < \| y \| , \end{array} \right. 
\end{equation}
where $\| \cdot \|$ denotes the distance from the nearest integer.
This is an even, $1$-periodic function, non-increasing on $[0,1/2]$. By the assumption that $f$ is non-decreasing, it satisfies $w_{f,\eta,\theta} (b/k,x) \le g_k(b/k-x)$ for all $x \in [0,1]$, thus
\begin{equation} \label{as_here_rev}  
W_{k,f} \le \frac{1}{\varphi (N)} \sum_{a \in \mathbb{Z}_N^*} \sum_{b \in \mathbb{Z}_k^*} g_k \left( \frac{b}{k} - \frac{a}{N} \right) . 
\end{equation}
For any $a \in \mathbb{Z}_N^*$ and $b \in \mathbb{Z}_k^*$, we have $\frac{b}{k} - \frac{a}{N} = \frac{c}{\mathrm{lcm} (N,k)}$ with some integer $c \not\equiv 0 \pmod{\mathrm{lcm} (N,k)}$. The map $(a,b) \mapsto c$ is a surjective homomorphism from the additive group $\mathbb{Z}_N \times \mathbb{Z}_k$ to $\mathbb{Z}_{\mathrm{lcm} (N,k)}$, therefore each mod $\mathrm{lcm} (N,k)$ residue class is attained by exactly $(Nk)/\mathrm{lcm} (N,k)$ pairs $(a,b) \in \mathbb{Z}_N \times \mathbb{Z}_k$. Hence
\begin{equation}\label{arguing_as_here}
\begin{split} W_{k,f} &\le \frac{Nk}{\varphi (N) \mathrm{lcm} (N,k)} \sum_{c=1}^{\mathrm{lcm} (N,k)-1} g_k \left( \frac{c}{\mathrm{lcm} (N,k)} \right) \\
&\le \frac{2 Nk}{\varphi (N)} \int_0^{1/2} g_k (y) \, \mathrm{d}y \\ &= \frac{2N}{\varphi(N) k} \left( \frac{f(\theta)}{\theta+1} + \sum_{m=\eta}^{\theta} \frac{f(m)}{m(m+1)} \right) \\ &\ll \frac{B_{f,\eta,\theta} \log \log N}{k}. \end{split}
\end{equation}
The claim \eqref{expectedupperbound} immediately follows from summing over $1 \le k \le N-1$.

Assume now, that $\sqrt{N}>\theta+2$, and let us prove \eqref{expectedasymptotics}. Let $T \ge \log N$ a parameter to be chosen, and $c(T)=4 (\log T)^2 / \log \log T$. Lemma \ref{lemma_weightfunctions} gives
\begin{equation}\label{expectedupperlowerbounds} \begin{split} 
& \frac{1}{\varphi (N)} \sum_{a \in \mathbb{Z}_N^*} S_{f,\eta,\theta} \left( \frac{a}{N} \right) \\ 
& = 2 \sum_{1 \le k \le \frac{\sqrt{N}}{e^{c(T)}}} W_{k,f} + O \left( A_{f,\eta,\theta} \log \log N + \sum_{\frac{\sqrt{N}}{\max \{ \theta+2, e^{c(T)} \}}<k<\sqrt{N}} W_{k,f} \right). \end{split}
\end{equation}

We fix $b \in \mathbb{Z}_k^*$, and apply Koksma's inequality as stated in Lemma \ref{lemma_koksma} to the function $w_{f,\eta,\theta} \left( b/k , \cdot \right)$ and the point set $\{a/N$, $a \in \mathbb{Z}_N^*\}$. Note that the intervals $I(b/k,m)$, $\eta \le m \le \theta$ are contiguous, and $I'(b/k,m)$, $\eta \le m \le \theta$ are also contiguous. The function $w_{f,\eta,\theta} \left( b/k , \cdot \right)$ is thus supported on the union of two short intervals of length $\le 1/k^2$, and by the assumption that $f$ is non-decreasing, we have $V(w_{f,\eta,\theta}(b/k,\cdot);[0,1]) \le 4 f(\theta)$. The discrepancy estimate in Lemma \ref{lemma_Manuel} with $u=4 \log T / (\log \log T)$ (whence $u^{-u/2} \ll (\log N) /T$ and $T^u = e^{c(T)}$) shows that for any interval $J \subseteq [0,1]$ of length $\lambda (J) \le 1/k^2$,
\[ \sup_{I \subseteq J} \left| \frac{1}{\varphi (N)} \sum_{a \in \mathbb{Z}_N^*} \mathds{1}_{I} \left( \frac{a}{N} \right) - \lambda(I) \right| \ll \frac{\log N}{k^2 T} + \frac{e^{c(T)}}{\varphi (N)} , \]
where the supremum is taken over all subintervals of $J$. We thus obtain
\begin{equation}\label{fixkbsum}
\frac{1}{\varphi (N)} \sum_{a \in \mathbb{Z}_N^*} w_{f,\eta,\theta} \left( \frac{b}{k} , \frac{a}{N} \right) = \int_0^1 w_{f,\eta,\theta} \left( \frac{b}{k},x \right) \, \mathrm{d}x + O \left( \frac{f(\theta) \log N}{k^2 T} + \frac{f(\theta) e^{c(T)}}{\varphi (N)} \right) .
\end{equation}
By Lemma \ref{integrallemma}, summing over $b \in \mathbb{Z}_k^*$ leads to
\[ W_{k,f} = \frac{\pi^2 A_{f,\eta, \theta}}{6} \cdot \frac{\varphi (k)}{k^2} + O \left( \frac{A_{f,\eta,\theta}}{k(\log (k+1))^{100}} + \frac{f(\theta) \log N}{k T} + \frac{k f(\theta) e^{c(T)}}{\varphi (N)} \right) . \]
Standard results on partial sums of multiplicative functions (see e.g.\ \cite[Theorem 14.3]{kouk}) show that $\sum_{1 \le k \le x} \varphi (k)/k^2 = \frac{6}{\pi^2} \log x +O(1)$. The main term in \eqref{expectedupperlowerbounds} is thus
\[ 2 \sum_{1 \le k \le \frac{\sqrt{N}}{e^{c(T)}}} W_{k,f} = A_{f,\eta,\theta} \log N + O \left( A_{f,\eta,\theta} c(T) + \frac{f(\theta) (\log N)^2}{T} \right) . \]

It remains to estimate $W_{k,f}$ for large $k$. Recall that by \eqref{arguing_as_here} we have
\begin{equation*}
W_{k,f} \ll \frac{B_{f,\eta,\theta} \log \log N}{k},
\end{equation*}
which implies that
\[ \sum_{\frac{\sqrt{N}}{\max \{ \theta+2, e^{c(T) \}}} < k < \sqrt{N}} W_{k,f} \ll B_{f,\eta,\theta} (\log \theta +c(T)) \log \log N . \]

Combining the previous estimates with \eqref{expectedupperlowerbounds}, we deduce
\[ \frac{1}{\varphi (N)} \sum_{a \in \mathbb{Z}_N^*} S_{f,\eta,\theta} \left( \frac{a}{N} \right) = A_{f,\eta,\theta} \log N + O \left( B_{f,\eta,\theta} (\log \theta + c(T)) \log \log N + \frac{f(\theta) (\log N)^2}{T} \right) . \]
The claim \eqref{expectedasymptotics} follows by choosing $T = C_{f,\eta,\theta}(\log N)^2 \ge 3 (\log N)^2$.
\end{proof}

\section{Variance} \label{sec_variance}

In this section we deduce an upper bound for the variance of $S_{f,\eta,\theta}$ from Lemmas \ref{lemma_weightfunctions} and \ref{lemma_weight_square}.
\begin{thm}\label{variancetheorem} Let $N \ge 3$ and $1 \le \eta \le \theta$ be integers, and let $f: \mathbb{N} \to [0,\infty )$ be a non-decreasing function. Assume that $\theta \le \min\{\sqrt{N}-2,(\log N)^C\}$ with some constant $C>0$. Let $A_{f,\eta,\theta}$, $B_{f,\eta,\theta}$, $C_{f,\eta,\theta}$ be as in Theorem \ref{expectedvaluetheorem}, and let $D_{f,\eta,\theta}=B_{f,\eta,\theta} \sum_{m=\eta}^{\theta} f(m)/m^4$. Then
\[ \begin{split} \frac{1}{\varphi (N)} \sum_{a \in \mathbb{Z}_N^*} S_{f,\eta,\theta} \left( \frac{a}{N} \right)^2 - \bigg( \frac{1}{\varphi (N)} &\sum_{a \in \mathbb{Z}_N^*} S_{f,\eta,\theta} \left( \frac{a}{N} \right) \bigg)^2 \\ \ll &D_{f,\eta,\theta} (\log N)^2 + B_{f^2,\eta,\theta} \log N +  B_{f,\eta,\theta}^2 \log N (\log \log N)^3 \\ &+ B_{f,\eta,\theta} \left( \sqrt{B_{f^2,\eta,\theta}}+ f(\lceil \sqrt{C_{f,\eta,\theta}} \rceil) \right) \log N \log \log N \end{split} \]
with an implied constant depending only on $C$.
\end{thm}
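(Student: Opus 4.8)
The plan is to run the estimate of Lemma~\ref{lemma_weight_square} with $R=\lceil\sqrt{C_{f,\eta,\theta}}\rceil$ (so $R\le\lceil\sqrt{5\theta}\rceil$ and $\log R\ll\log\log N$ under the hypothesis $\theta\le(\log N)^C$), divide through by $\varphi(N)$, control the six terms on the right-hand side, and at the end subtract $\bigl(\tfrac{1}{\varphi(N)}\sum_a S_{f,\eta,\theta}(a/N)\bigr)^2$. Three of the terms are routine. The term $12\sum_a S_{f^2,\eta,\theta}(a/N)$ is handled by Theorem~\ref{expectedvaluetheorem} applied with $f$ replaced by $f^2$ (using $A_{f^2,\eta,\theta}\le B_{f^2,\eta,\theta}$), giving $\ll B_{f^2,\eta,\theta}\log N$. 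The term $24f(R)(1+\log R)\sum_a S_{f,\eta,\theta}(a/N)$ is handled by Theorem~\ref{expectedvaluetheorem} together with $f(R)(1+\log R)\ll f(\lceil\sqrt{C_{f,\eta,\theta}}\rceil)\log\log N$, giving $\ll B_{f,\eta,\theta}f(\lceil\sqrt{C_{f,\eta,\theta}}\rceil)\log N\log\log N$. Finally, using $0\le f(m+1)-f(m)\le f(m+1)$ one gets $\Xi_{f,\eta,\theta}\ll B_{f^2,\eta,\theta}$, so $\Xi_{f,\eta,\theta}N$ contributes $\ll B_{f^2,\eta,\theta}\log N$ after division by $\varphi(N)$ (recall $N/\varphi(N)\ll\log\log N$).

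The two terms containing $\bigl(\sum_a S_{f,\eta,\theta}(a/N)^2\bigr)^{1/2}$ involve the very quantity being estimated, so I would use a two-pass argument: first establish a preliminary upper bound for $\tfrac{1}{\varphi(N)}\sum_a S_{f,\eta,\theta}(a/N)^2$ by running the whole argument up to the value of the implied constants (so that the $\bigl(\sum_a S_{f,\eta,\theta}(a/N)^2\bigr)^{1/2}$ terms may be absorbed by completing the square), obtaining something of the shape $\ll(B_{f,\eta,\theta}\log N)^2+B_{f^2,\eta,\theta}\log N+\cdots$, and then substitute this preliminary bound back into those two terms in a second pass. Using $\sum_{\sqrt N/R\le k<\sqrt N}\sum_a\sum_b w_{f^2,\eta,\theta}(b/k,a/N)\ll\varphi(N)B_{f^2,\eta,\theta}(\log\log N)^2$ (from \eqref{arguing_as_here}) and $\Xi_{f,\eta,\theta}\ll B_{f^2,\eta,\theta}$, the two terms become errors of size $\ll B_{f,\eta,\theta}\sqrt{B_{f^2,\eta,\theta}}\log N\log\log N+B_{f^2,\eta,\theta}\log N$. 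It is essential that in the \emph{second} pass the main double sum is \emph{not} absorbed into the completed square, as this would inflate its leading constant and destroy the cancellation with the squared mean.

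The heart of the matter is the main double sum $M=\tfrac{1}{\varphi(N)}\sum_{k<\sqrt N/R}\sum_a\sum_b w_{f,\eta,\theta}(b/k,a/N)S_{f,\eta,\theta}(b/k)$. On the bulk range $k<\sqrt N/e^{c(T)}$, with $T=C_{f,\eta,\theta}(\log N)^2$ and $c(T)=4(\log T)^2/\log\log T$ as in the proof of Theorem~\ref{expectedvaluetheorem}, Koksma's inequality (Lemma~\ref{lemma_koksma}) and the discrepancy estimate (Lemma~\ref{lemma_Manuel}), applied exactly as in \eqref{fixkbsum}, replace $\tfrac{1}{\varphi(N)}\sum_a w_{f,\eta,\theta}(b/k,a/N)$ by $\int_0^1 w_{f,\eta,\theta}(b/k,x)\,\mathrm dx$ at the cost of an error which, multiplied by $S_{f,\eta,\theta}(b/k)$ and summed using Theorem~\ref{expectedvaluetheorem}, is $\ll B_{f,\eta,\theta}^2\log N\log\log N$. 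By the computation in the proof of Lemma~\ref{integrallemma}, $\int_0^1 w_{f,\eta,\theta}(b/k,x)\,\mathrm dx=k^{-2}H\bigl(q_{s-1}(b/k)/k\bigr)$ with $H(\beta)=\sum_{m=\eta}^{\theta}f(m)g_m(\beta)$, $g_m(\beta)=\tfrac{1}{(m+\beta)(m+1+\beta)}+\tfrac{1}{(m+1-\beta)(m+2-\beta)}$. Writing $g_m(\beta)=\phi(m+\tfrac12+\beta)+\phi(m+\tfrac32-\beta)$ with $\phi(x)=(x^2-\tfrac14)^{-1}$ shows $g_m$ is symmetric about $\beta=\tfrac12$, and a second-order Taylor expansion then gives $\bigl|g_m(\beta)-\int_0^1 g_m\bigr|\ll m^{-4}$ uniformly in $\beta$, hence $\bigl|H(\beta)-\tfrac{\pi^2}{6}A_{f,\eta,\theta}\bigr|\ll\sum_{m=\eta}^{\theta}f(m)/m^4$ for every $\beta$. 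Therefore $\sum_b S_{f,\eta,\theta}(b/k)\int_0^1 w_{f,\eta,\theta}(b/k,x)\,\mathrm dx=\tfrac{\pi^2A_{f,\eta,\theta}}{6k^2}\sum_b S_{f,\eta,\theta}(b/k)+O\bigl(k^{-2}(\sum_m f(m)/m^4)\sum_b S_{f,\eta,\theta}(b/k)\bigr)$; inserting the sharp asymptotic $\sum_{b\in\mathbb{Z}_k^*}S_{f,\eta,\theta}(b/k)=\varphi(k)\bigl(A_{f,\eta,\theta}\log k+O(B_{f,\eta,\theta}(\log\log N)^3)\bigr)$ from \eqref{expectedasymptotics} for $k>(\theta+2)^2$ (smaller $k$ being negligible) and the Mertens-type estimate $\sum_{k\le x}\varphi(k)(\log k)/k^2=\tfrac{3}{\pi^2}(\log x)^2+O(\log x)$, the main contribution of $8M$ over the bulk equals $(A_{f,\eta,\theta}\log N)^2+O\bigl(D_{f,\eta,\theta}(\log N)^2+B_{f,\eta,\theta}^2\log N(\log\log N)^3\bigr)$, which matches $\bigl(\tfrac{1}{\varphi(N)}\sum_a S_{f,\eta,\theta}(a/N)\bigr)^2$ up to the permitted error.

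The main obstacle is the collar $\sqrt N/e^{c(T)}\le k<\sqrt N/R$ of moduli near $\sqrt N$, together with the attendant bookkeeping. Here Lemma~\ref{lemma_Manuel} is too weak for Koksma's inequality to be effective, since the points $a/N$ cannot resolve intervals of length $\asymp 1/k^2$, so $8M$ cannot be evaluated directly on this range; instead one must exploit that a continued fraction of denominator $N$ has only $O(c(T))=O((\log\log N)^2)$ convergent denominators in this window (because $q_{i+2}\ge 2q_i$), fix $a$, bound the convergents' restricted sums against $S_{f,\eta,\theta}(a/N)$ (combined with a Cauchy--Schwarz splitting onto $S_{f^2,\eta,\theta}$ over this short range of indices and with the expected-value estimates of Theorem~\ref{expectedvaluetheorem}), and then carefully track the various resulting powers of $\log\log N$ so that the collar's contribution is absorbed into the stated error; balancing these powers against the error terms $B_{f,\eta,\theta}^2\log N(\log\log N)^3$ and $B_{f,\eta,\theta}\sqrt{B_{f^2,\eta,\theta}}\log N\log\log N$ is the delicate part. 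The second delicate point, already flagged above, is that the leading constant of $8M$ must be kept equal to one: this forces both the use of the pointwise bound $\bigl|H(\beta)-\tfrac{\pi^2}{6}A_{f,\eta,\theta}\bigr|\ll\sum_m f(m)/m^4$ (the weaker $\ll A_{f,\eta,\theta}/\eta$ would replace $D_{f,\eta,\theta}(\log N)^2$ by the far larger $(A_{f,\eta,\theta}^2/\eta)(\log N)^2$) and the two-pass treatment of the terms carrying $\bigl(\sum_a S_{f,\eta,\theta}(a/N)^2\bigr)^{1/2}$ in place of a single completion of the square.
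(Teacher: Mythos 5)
Your overall strategy (run Lemma~\ref{lemma_weight_square} with $R=\lceil\sqrt{C_{f,\eta,\theta}}\rceil$, evaluate $\int_0^1 w_{f,\eta,\theta}(b/k,\cdot)$ to precision $O(k^{-2}\sum_m f(m)/m^4)$ using the symmetry/cancellation in $g_m$, feed $\sum_b S_{f,\eta,\theta}(b/k)$ back in via Theorem~\ref{expectedvaluetheorem}, and use the Mertens-type sum $\sum_k \varphi(k)\log k/k^2$) matches the paper, and your treatment of the four ``routine'' terms is essentially the paper's. Your two-pass handling of the $\bigl(\sum_a S_{f,\eta,\theta}(a/N)^2\bigr)^{1/2}$ terms is valid but more elaborate than needed: the paper simply writes $\sqrt{X}\le\sqrt{V}+\sqrt{Y}\ll\sqrt{V}+B_{f,\eta,\theta}\log N$ (with $X$ the second moment, $Y$ the squared mean, $V=X-Y$) so that the inequality becomes $V\ll c+\sqrt{V}\,d$, which gives $V\ll c+d^2$ in one step; this also avoids your worry about ``inflating the leading constant,'' since the $A_{f,\eta,\theta}^2(\log N)^2$ main term is never squared away but cancels against $Y$ directly.

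However, your treatment of the collar $\sqrt{N}/e^{c(T)}\le k<\sqrt{N}/R$ has a genuine gap, and the quantitative claim there is wrong. You assert that Lemma~\ref{lemma_Manuel} is too weak for Koksma to be effective on this range and that one must instead exploit that there are only $O(c(T))$ convergent denominators in the window, then do Cauchy--Schwarz onto $S_{f^2,\eta,\theta}$. But the window has $O(c(T))\asymp(\log\log N)^2/\log\log\log N$ convergent indices (not $O(\log R)$ as in Lemma~\ref{lemma_weight_square}), and the splitting ``$a_i\le R$ vs.\ $a_i>R$'' no longer isolates at most one large term when $q_{i-1}$ can be as small as $\sqrt{N}/e^{c(T)}$. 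Carrying out the Cauchy--Schwarz you describe yields, for the collar's contribution to $8M$, something of order $c(T)^{1/2}\sqrt{X}\sqrt{B_{f^2,\eta,\theta}\log N}$, so that after the $V\ll c+\sqrt{V}d$ step one picks up $d^2\gg c(T)B_{f^2,\eta,\theta}\log N\asymp B_{f^2,\eta,\theta}\log N(\log\log N)^2/\log\log\log N$ and a cross term $\gg B_{f,\eta,\theta}\sqrt{B_{f^2,\eta,\theta}}\,(\log N)^{3/2}c(T)^{1/2}$. Both exceed the error budget of the theorem (by roughly $(\log\log N)^2$ and $\sqrt{\log N}$ respectively), so this route does not prove the stated bound. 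The fix, and what the paper actually does, is much simpler and does use Koksma: for $k$ in the collar, extend the $a$-sum from $\mathbb{Z}_N^*$ to all of $\mathbb{Z}_N$, paying only a factor $N/\varphi(N)\ll\log\log N$, and then apply Lemma~\ref{lemma_koksma} to the equidistant point set $\{a/N:1\le a\le N\}$, whose discrepancy over arbitrary subintervals is trivially $O(1/N)$. This gives $\frac{1}{\varphi(N)}\sum_{a\in\mathbb{Z}_N^*}w_{f,\eta,\theta}(b/k,a/N)\ll\log\log N\bigl(A_{f,\eta,\theta}/k^2+f(\theta)/N\bigr)$, and after multiplying by $S_{f,\eta,\theta}(b/k)\ll B_{f,\eta,\theta}\varphi(k)\log k$ and summing over the collar one lands cleanly in $O\bigl(B_{f,\eta,\theta}^2\log N(\log\log N)^3\bigr)$, using $\sum_{\sqrt N/e^{c(T)}<k<\sqrt N/R}\varphi(k)\log k/k^2\ll\log N\cdot c(T)$ and $C_{f,\eta,\theta}/R^2\le 1$. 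Your claim that ``$8M$ cannot be evaluated directly on this range'' confuses ``evaluate asymptotically'' (indeed impossible) with ``bound effectively via Koksma'' (which is exactly what the paper does, on a different point set); as written, this part of your argument does not close.
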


\begin{proof} Throughout the proof we write
$$
X = \frac{1}{\varphi (N)} \sum_{a \in \mathbb{Z}_N^*} S_{f,\eta,\theta} \left( \frac{a}{N} \right)^2 \qquad \textrm{and} \qquad 
Y = \bigg( \frac{1}{\varphi (N)} \sum_{a \in \mathbb{Z}_N^*} S_{f,\eta,\theta} \left( \frac{a}{N} \right) \bigg)^2,
$$
as well as $V = X - Y$. The aim is thus to establish an upper bound for $V$. Theorem \ref{expectedvaluetheorem} shows that
\begin{equation} \label{Y}
Y = A_{f,\eta,\theta}^2 (\log N)^2 + O \left( B_{f,\eta,\theta}^2 \log N (\log \log N)^3 \right) .
\end{equation}

Now we deduce an upper bound for $X$ from Lemma \ref{lemma_weight_square}. Set $R=\lceil \sqrt{C_{f,\eta,\theta}} \rceil$, $T = C_{f,\eta,\theta}(\log N)^3$ and $c(T)=4 (\log T)^2 / (\log \log T)$, and note that in particular $C_{f,\eta,\theta} \le 5 \theta \ll (\log N)^C$, as well as $\log R \ll \log \log N$ and $c(T) \ll \frac{(\log \log N)^2}{\log \log \log N}$.

Fix $k$ in the range $1 \le k \le \sqrt{N} /e^{c(T)}$, and fix $b \in \mathbb{Z}_k^*$. As observed in formula \eqref{fixkbsum} in the proof of Theorem \ref{expectedvaluetheorem},
\[ \frac{1}{\varphi (N)} \sum_{a \in \mathbb{Z}_N^*} w_{f,\eta,\theta} \left( \frac{b}{k} , \frac{a}{N} \right) = \int_0^1 w_{f,\eta,\theta} \left( \frac{b}{k}, x \right) \, \mathrm{d}x + O \left( \frac{B_{f,\eta,\theta}}{k^2 (\log N)^2} + \frac{f(\theta) e^{c(T)}}{\varphi (N)} \right) . \]
We computed $\int_0^1 w_{f,\eta,\theta}(b/k,x) \, \mathrm{d}x$ in the proof of Lemma \ref{integrallemma}. Using the fact that the function
\[ h(y)= \frac{1}{(m+y)(m+1+y)} + \frac{1}{(m+1-y)(m+2-y)} \]
satisfies $V(h;[0,1]) \ll 1/m^4$, we get $h(y)=\int_0^1 h + O(1/m^4) = 2 \log \left( 1+\frac{1}{m(m+2)} \right) + O(1/m^4)$ uniformly in $y \in [0,1]$. Hence
\[ \int_0^1 w_{f,\eta,\theta}(b/k,x) \, \mathrm{d}x = \frac{\pi^2 A_{f,\eta,\theta}}{6k^2} + O \left( \frac{1}{k^2} \sum_{m=\eta}^{\theta} \frac{f(m)}{m^4} \right) , \]
and consequently
\[ \frac{1}{\varphi (N)} \sum_{a \in \mathbb{Z}_N^*} w_{f,\eta,\theta} \left( \frac{b}{k} , \frac{a}{N} \right) =  \frac{\pi^2 A_{f,\eta,\theta}}{6k^2} + O \left( \frac{1}{k^2} \sum_{m=\eta}^{\theta} \frac{f(m)}{m^4} + \frac{B_{f,\eta,\theta}}{k^2 (\log N)^2} + \frac{f(\theta) e^{c(T)}}{\varphi (N)} \right) . \]
We multiply the previous formula by $S_{f,\eta,\theta} (b/k)$, and then sum over $b \in \mathbb{Z}_k^*$ and $k$. By an application of Theorem \ref{expectedvaluetheorem} with the fixed denominator $k$, we obtain
\begin{equation}\label{smallkcontribution}
\begin{split} \frac{1}{\varphi (N)} &\sum_{1 \le k \le \frac{\sqrt{N}}{e^{c(T)}}} \sum_{a \in \mathbb{Z}_N^*} \sum_{b \in \mathbb{Z}_k^*} w_{f,\eta,\theta} \left( \frac{b}{k} , \frac{a}{N} \right) S_{f,\eta,\theta} \left( \frac{b}{k} \right) \\ &= \frac{\pi^2 A_{f,\eta,\theta}^2}{6} \sum_{(\theta+2)^2+1 \leq k \le \frac{\sqrt{N}}{e^{c(T)}}} \frac{\varphi(k) \log k}{k^2} + O \left( D_{f,\eta,\theta} (\log N)^2 +B_{f,\eta,\theta}^2 \log N (\log \log N)^3 \right) \\ &\leq \frac{A_{f,\eta,\theta}^2}{8} (\log N)^2 + O \left( D_{f,\eta,\theta} (\log N)^2 + B_{f,\eta,\theta}^2 \log N (\log \log N)^3 \right) . \end{split}
\end{equation}
In the last step we used $\sum_{1 \le k \le x} \varphi (k) \log k /k^2 = \frac{3}{\pi^2} (\log x)^2 + O(\log x)$, which follows from $\sum_{1 \le k \le x} \varphi (k) /k^2 = \frac{6}{\pi^2} \log x + O(1)$ via summation by parts.

Next, fix $k$ in the range $\sqrt{N}/e^{c(T)}<k < \sqrt{N}/R$, and fix $b \in \mathbb{Z}_k^*$. After extending the sum over $a$ from $\mathbb{Z}_N^*$ to $\mathbb{Z}_N$ and using a trivial discrepancy estimate for the equidistant set $\{ a/N \, : \,1 \le a \le N\}$, Koksma's inequality in Lemma \ref{lemma_koksma} yields
\begin{equation*}  \begin{split} \frac{1}{\varphi(N)} \sum_{a \in \mathbb{Z}_N^*} w_{f,\eta,\theta} \left( \frac{b}{k} , \frac{a}{N} \right) &\ll \frac{\log \log N}{N} \sum_{a=1}^N w_{f,\eta,\theta} \left( \frac{b}{k} , \frac{a}{N} \right) \\ &\ll \log \log N \left( \int_0^1 w_{f,\eta,\theta} \left( \frac{b}{k}, x \right) \, \mathrm{d}x + \frac{f(\theta)}{N} \right) \\ &\ll \log \log N \left( \frac{A_{f,\eta,\theta}}{k^2} + \frac{f(\theta)}{N} \right) . \end{split} \end{equation*}
Therefore
\[ \frac{1}{\varphi (N)} \sum_{a \in \mathbb{Z}_N^*} \sum_{b \in \mathbb{Z}_k^*} w_{f,\eta,\theta} \left( \frac{b}{k} , \frac{a}{N} \right) S_{f,\eta,\theta} \left( \frac{b}{k} \right) \ll B_{f,\eta,\theta} \varphi(k) \log k \log \log N \left( \frac{A_{f,\eta,\theta}}{k^2} + \frac{f(\theta)}{N} \right) , \]
and by summing over $\sqrt{N}/e^{c(T)} < k < \sqrt{N} /R$,
\[ \begin{split} \frac{1}{\varphi (N)} \sum_{\frac{\sqrt{N}}{e^{c(T)}} < k < \frac{\sqrt{N}}{R}} \sum_{a \in \mathbb{Z}_N^*} &\sum_{b \in \mathbb{Z}_k^*} w_{f,\eta,\theta} \left( \frac{b}{k} , \frac{a}{N} \right) S_{f,\eta,\theta} \left( \frac{b}{k} \right) \\ &\ll B_{f,\eta,\theta}^2 \log N \log \log N c(T) + \frac{B_{f,\eta,\theta}^2 C_{f,\eta,\theta}\log N \log \log N}{R^2}  \\
& \ll B_{f,\eta,\theta}^2 \log N (\log \log N)^3, \end{split} \]
since we chose $R =\lceil \sqrt{C_{f,\eta,\theta}} \rceil$. 

Next, we apply Theorem \ref{expectedvaluetheorem} to the function $f^2$. Using $C_{f^2, \eta, \theta} \le 5 \theta \ll (\log N)^C$, we obtain
\[ \frac{1}{\varphi(N)} \sum_{a \in \mathbb{Z}_N^*} S_{f^2,\eta,\theta}\left( \frac{a}{N} \right) \ll B_{f^2,\eta,\theta} \log N . \]
Arguing as in \eqref{arguing_as_here}, we deduce
\[ \begin{split} \frac{1}{\varphi(N)} \sum_{\sqrt{N}/R  \leq k <\sqrt{N}} \sum_{a \in \mathbb{Z}_N^*} \sum_{b \in \mathbb{Z}_k^*} w_{f^2,\eta,\theta} \left( \frac{b}{k} , \frac{a}{N} \right) \ll & \sum_{\sqrt{N}/R \leq k < \sqrt{N}} B_{f^2,\eta,\theta} \frac{\log \log N}{k} \\ \ll & B_{f^2,\eta,\theta} (\log \log N)^2.\end{split} \]
Observing that $\Xi_{f,\eta, \theta}$, as defined in \eqref{Xidef}, satisfies $\Xi_{f,\eta, \theta} \ll B_{f^2,\eta, \theta}$, we also have
\[ \frac{\Xi_{f,\eta, \theta} N}{\varphi(N)} \ll B_{f^2,\eta,\theta} \log \log N. \]
Finally, by Theorem \ref{expectedvaluetheorem}, we have
\[ \frac{1}{\varphi (N)} \sum_{a \in \mathbb{Z}_N^*} S_{f,\eta,\theta} (a/N) \ll B_{f,\eta,\theta} \log N . \]

The previous five formulas together with \eqref{smallkcontribution} give an upper bound for all terms which appear in Lemma \ref{lemma_weight_square}, and lead to
\[ \begin{split} X \le A_{f,\eta,\theta}^2 (\log N)^2 + O \Bigg( &D_{f,\eta,\theta} (\log N)^2+B_{f,\eta,\theta}^2 \log N (\log \log N)^3 + B_{f^2,\eta,\theta} \log N \\ &+ \sqrt{X} \sqrt{B_{f^2,\eta,\theta}} \log \log N + B_{f,\eta,\theta} f(\lceil \sqrt{C_{f,\eta,\theta}} \rceil) \log N \log \log N \Bigg). \end{split} \]
We have $V = X - Y$, and so $\sqrt{X} \leq \sqrt{V} + \sqrt{Y}$, which by \eqref{Y} implies $\sqrt{X} \ll \sqrt{V} + B_{f,\eta,\theta} \log N$. Inserting this into the previous formula, we obtain
\[ \begin{split} X \le A_{f,\eta,\theta}^2 &(\log N)^2 \\+ O \Bigg( &D_{f,\eta,\theta} (\log N)^2+B_{f,\eta,\theta}^2 \log N (\log \log N)^3 + B_{f^2,\eta,\theta} \log N \\ &+ \sqrt{V} \sqrt{B_{f^2,\eta,\theta}} \log \log N+ B_{f,\eta,\theta} \left( \sqrt{B_{f^2,\eta,\theta}} + f(\lceil \sqrt{C_{f,\eta,\theta}} \rceil) \right) \log N \log \log N \Bigg). \end{split} \]
Subtracting $Y$ from the previous formula thus leads to
\[ \begin{split}
V \ll &D_{f,\eta,\theta} (\log N)^2 +  B_{f,\eta,\theta}^2 \log N (\log \log N)^3 + B_{f^2,\eta,\theta} \log N \\
&+ \sqrt{V} \sqrt{B_{f^2,\eta,\theta}} \log \log N + B_{f,\eta,\theta} \left( \sqrt{B_{f^2,\eta,\theta}} + f(\lceil \sqrt{C_{f,\eta,\theta}} \rceil) \right) \log N \log \log N . \end{split}\]
This is an inequality of the form $V \ll c + \sqrt{V} d$, which implies $V \ll c + d^2$. Thus
\[\begin{split} V \ll &D_{f,\eta,\theta} (\log N)^2 +  B_{f,\eta,\theta}^2 \log N (\log \log N)^3 + B_{f^2,\eta,\theta} \log N \\ &+ B_{f,\eta,\theta} \left( \sqrt{B_{f^2,\eta,\theta}}+ f(\lceil \sqrt{C_{f,\eta,\theta}} \rceil) \right) \log N \log \log N, \end{split} \]
as claimed.
\end{proof}

\section{Proof of the main theorems}\label{sec_mainproofs}

\begin{proof}[Proof of Theorems \ref{Mtheorem} and \ref{Ltheorem}] Let $f(x)=1$. Note that with the notation of Theorem \ref{expectedvaluetheorem}, we have $A_{f,\eta,\theta} = \frac{12}{\pi^2} \sum_{m=\eta}^{\theta} \log \left( 1+\frac{1}{m(m+2)} \right)$ and $B_{f,\eta,\theta} \ll 1/\eta$. If $c \le (\log N)^{C+1}$, then by choosing $\eta=b$ and $\theta=c$ we have $L_{[b,c]}(a/N)=S_{f,\eta,\theta}(a/N)$, and Theorem \ref{expectedvaluetheorem} immediately yields \eqref{Ltexpectedvalue}. If $c>(\log N)^{C+1}$, then by choosing $\eta=b$ and $\theta=\lfloor (\log N)^{C+1} \rfloor$ we have $S_{f,\eta,\theta}(a/N) \le L_{[b,c]}(a/N) \le S_{f,\eta,\theta}(a/N)+L_{[\theta+1,N]}(a/N)$, and Theorem \ref{expectedvaluetheorem} yields
\[ \frac{1}{\varphi(N)} \sum_{a \in \mathbb{Z}_N^*} L_{[b,c]} \left( \frac{a}{N} \right) = \mu_{[b,\theta]} \log N + O \left( \frac{(\log \log N)^3}{b} + \frac{\log N \log \log N}{\theta} \right) . \]
Here $\mu_{[b,\theta]} = \mu_{[b,c]}+O(1/\theta)$, and $\frac{\log N \log \log N}{\theta} \ll (\log \log N)^3/b$. This finishes the proof of \eqref{Ltexpectedvalue}.

Observing that $M(a/N) \ge t \log N \, \Leftrightarrow \, L_{[\lceil t \log N \rceil,N]}(a/N) \ge 1$, an application of the Markov inequality leads to Theorem \ref{Mtheorem}: for any $0<t \le (\log N)^C$, we have
\[ \begin{split} \frac{1}{\varphi (N)} \bigg| \bigg\{ a \in \mathbb{Z}_N^* \, : \, &M \left( \frac{a}{N} \right) \ge t \log N \bigg\} \bigg| \\ &\le \frac{1}{\varphi (N)} \sum_{a \in \mathbb{Z}_N^*} L_{[\lceil t \log N \rceil,N]} \left( \frac{a}{N} \right) \\ &= \frac{12}{\pi^2} \log N \sum_{m \ge t \log N} \log \left( 1+\frac{1}{m(m+2)} \right) + O \left( \frac{(\log \log N)^3}{t \log N} \right) \\ &=\frac{12}{\pi^2 t} + O \left( \frac{(\log \log N)^3}{t \log N} \right) . \end{split} \]

It remains to prove the upper bound for the variance of $L_{[b,c]}(a/N)$. With the notation of Theorem \ref{variancetheorem} we have $D_{f,\eta,\theta} \ll 1/\eta^4$. If $c< (\log N)^{4C+1}$, then choosing $\eta=b$ and $\theta=c$ we have $L_{[b,c]}(a/N)=S_{f,\eta,\theta}(a/N)$, and Theorem \ref{variancetheorem} immediately yields
\[ \frac{1}{\varphi(N)} \sum_{a \in \mathbb{Z}_N^*} \left( L_{[b,c]} \left( \frac{a}{N} \right) -E \right)^2 \ll \frac{(\log N)^2}{b^4} + \frac{\log N \log \log N}{b} \]
with $E=\varphi(N)^{-1} \sum_{a \in \mathbb{Z}_N^*} L_{[b,c]}(a/N)=\mu_{[b,c]} \log N +O((\log \log N)^3/b)$. The previous formula thus also holds with $E$ replaced by $\mu_{[b,c]} \log N$, as claimed.

If $c> (\log N)^{4C+1}$, then we choose $\eta=b$ and $\theta = \lceil b^4 \log N \rceil$. An application of Theorem \ref{variancetheorem} similarly leads to
\[ \frac{1}{\varphi (N)} \sum_{a \in \mathbb{Z}_N^*} \left( S_{f, \eta,\theta} \left( \frac{a}{N} \right) - E \right)^2 \ll \frac{(\log N)^2}{b^4} + \frac{\log N \log \log N}{b} \]
with $E=\varphi (N)^{-1} \sum_{a \in \mathbb{Z}_N^*} S_{f,\eta,\theta} (a/N) = \mu_{[b,c]} \log N + O((\log \log N)^3/b)$. The previous formula thus also holds with $E$ replaced by $\mu_{[b,c]} \log N$. On the other hand, we have $L_{[b,c]} (a/N) \neq S_{f, \eta, \theta} (a/N) \, \Leftrightarrow \, M(a/N) > \theta$, hence Theorem \ref{Mtheorem} gives
\[ \frac{1}{\varphi (N)} \left| \left\{ a \in \mathbb{Z}_N^* \, : \, L_{[b,c]} \left( \frac{a}{N} \right) \neq S_{f, \eta, \theta} \left( \frac{a}{N} \right) \right\} \right| \ll \frac{1}{b^4} . \]
Using the uniform bound $L_{[b,c]} (a/N) \ll \log N$, the previous two formulas establish the desired upper estimate for the variance.
\end{proof}

\begin{proof}[Proof of Theorem \ref{Stheorem}] The theorem trivially holds for $0<t<100$, hence we may assume that $100 \le t \le (\log N)^C$. Set $U = \lfloor \log_2 (t \log N) \rfloor$, and let $1 \le u \le U$ be an integer. Let $f(x)=x$, $\eta =2^{u-1}$ and $\theta=2^u -1$, and consider $S^{(u)} (a/N) := S_{f,\eta, \theta} (a/N)$. Then
$$
A_{f,\eta,\theta} = \frac{12 \log 2}{\pi^2} + O(2^{-u}), \quad B_{f,\eta,\theta} \ll 1, \quad  C_{f,\eta,\theta} \ll 2^u, \quad D_{f,\eta,\theta} \ll 2^{-2u},
$$
as well as $B_{f^2,\eta,\theta} \ll 2^u$, and $C_{f^2,\eta,\theta} \ll 2^u$. By Theorems \ref{expectedvaluetheorem} and \ref{variancetheorem}, for all $1 \le u \le U$, we have
\[ \frac{1}{\varphi(N)} \sum_{a \in \mathbb{Z}_N^*}  S^{(u)} \left( \frac{a}{N} \right) = \frac{12 \log 2}{\pi^2} \log N  + O \left( 2^{-u} \log N + (\log \log N)^3 \right) , \]
and
\begin{equation}\label{dyadic_variance} \frac{1}{\varphi (N)} \sum_{a \in \mathbb{Z}_N^*}  \left( S^{(u)} \left( \frac{a}{N} \right) - \frac{12 \log 2}{\pi^2} \log N \right)^2 \ll 2^u \log N + 2^{-2u} (\log N)^2 . \end{equation}
An application of the Chebyshev inequality thus gives
\[ \begin{split} \frac{1}{\varphi (N)} \bigg| \bigg\{ a \in \mathbb{Z}_N^* \, : \, \bigg| S^{(u)} \left( \frac{a}{N} \right)  - \frac{12 \log 2}{\pi^2} &\log N \bigg| \geq \frac{t \log N}{10 (\min\{u,U-u\})^{2}}  \bigg\} \bigg| \\ & \ll \left( 2^u \log N + 2^{-2u} (\log N)^2 \right) \cdot \frac{(\min\{u,U-u\})^{4}}{t^2 (\log N)^2} \\ & \ll \frac{2^u (U-u)^4}{t^2 \log N} + \frac{2^{-2u} u^4}{t^2} . \end{split} \]
Since
$$
\sum_{u=1}^U \frac{1}{10(\min\{u,U-u\})^{2}} \leq \pi^2/30 \leq 1/2,
$$ 
the union bound immediately yields
\[ \begin{split} \frac{1}{\varphi (N)} \bigg| \bigg\{ a \in \mathbb{Z}_N^* \, : \, \bigg| \sum_{u=1}^U S^{(u)} \left( \frac{a}{N} \right)  - \frac{12 \log 2}{\pi^2} U &\log N \bigg| \geq \frac{t \log N}{2} \bigg\} \bigg| \\ &\ll \sum_{u=1}^U \left( \frac{2^u (U-u)^4}{t^2 \log N} + \frac{u^{4} 2^{-2u}}{t^2} \right) \ll  \frac{1}{t} .
\end{split}\]

On the other hand,
\[ S \left( \frac{a}{N}\right) = \sum_{u=1}^U S^{(u)} \left( \frac{a}{N}\right) + \sum_{i=1}^r \mathds{1}_{\{2^{U} \leq a_i\}} a_i , \]
and in particular, $S(a/N) \neq \sum_{u=1}^U S^{(u)} (a/N) \, \Leftrightarrow \, M(a/N) \geq 2^U$. Theorem \ref{Mtheorem} thus shows that
\[ \frac{1}{\varphi (N)} \left| \left\{ a \in \mathbb{Z}_N^* \, : \, S \left( \frac{a}{N} \right) \neq \sum_{u=1}^U S^{(u)} \left( \frac{a}{N}\right) \right\} \right| \ll \frac{1}{t} , \]
and consequently,
\[ \frac{1}{\varphi (N)} \left| \left\{ a \in \mathbb{Z}_N^* \, : \, \left| S \left( \frac{a}{N} \right)  - \frac{12 \log 2}{\pi^2} U \log N \right| \geq \frac{t \log N}{2} \right\} \right| \ll  \frac{1}{t} . \]
Finally, note that
$$
\frac{12}{\pi^2} |\log \log N - U \log 2| \leq \frac{12}{\pi^2} (1 + \log t) \leq t/2
$$
holds by the assumption $t \ge 100$. Overall, we conclude that
\[ \frac{1}{\varphi (N)} \left| \left\{ a \in \mathbb{Z}_N^* \, : \, \left| S \left( \frac{a}{N} \right)  - \frac{12}{\pi^2} \log N \log \log N \right| \geq t \log N \right\} \right| \ll \frac{1}{t},
\]
as desired.
\end{proof}

\section{Dedekind sums}\label{sec_dedekind}

By the Barkan--Hickerson formula \eqref{hickerson}, Theorem \ref{Dtheorem} is equivalent to showing that
\begin{equation}\label{need_to_show}
\frac{1}{\varphi(N)} \left| \left\{a \in \mathbb{Z}_N^*: \left| S_{\mathrm{alt}} \left( \frac{a}{N} \right) \right| \geq t \log N\right\} \right| \ll \frac{1}{t}
\end{equation}
for all $0 < t \le (\log N)^C$. Note that $S_{\mathrm{alt}}(a/N) = \sum_{i=1}^r (-1)^i a_i$ cannot be written in the form
$\sum_{i=1}^r f(a_i)$ due to the presence of the alternating factor $(-1)^i$, so the framework developed in the previous sections does not directly apply to this problem. We will first outline the necessary changes in comparison to the arguments developed in the previous sections, before actually proving in Lemma \ref{var_ded} a second moment estimate which allows us to deduce \eqref{need_to_show}.

Fixing $1 \leq \eta \leq \theta$, we write
\begin{equation}\label{def_Se}
	S_{\textup{e}}(a/N) =  \sum_{i=1}^{\lfloor r/2 \rfloor} \mathds{1}_{\{\eta \leq a_{2i} \leq \theta\}}a_{2i}, \quad S_{\textup{o}}(a/N) =  \sum_{i=0}^{\lceil r/2 \rceil -1} \mathds{1}_{\{\eta \leq a_{2i+1} \leq \theta\}}a_{2i+1}
\end{equation}
for partial sums corresponding to even resp.\ odd indices, and
$$
S_{\mathrm{alt},\eta,\theta}(a/N) = S_{\textup{e}}(a/N) - S_{\textup{o}}(a/N) = \sum_{i = 1}^r (-1)^i \mathds{1}_{\{\eta \leq a_{i} \leq \theta\}}a_i.
$$
We would like to reduce $S_{\textup{e}}(a/N)$ and $S_{\textup{o}}(a/N)$ to counting only partial quotients associated with convergent denominators of size up to roughly $\sqrt{N}$, by taking into account the reflection via $a^*$ as in the proof of Lemma \ref{lemma_weightfunctions}.
This leads to
\begin{equation}\label{heur_even}S_{\textup{e}}(a/N) \approx
	\sum_{\substack{i =1 \\ q_{2i-1}(a/N) < \sqrt{N}}}^{\lfloor r/2 \rfloor }\mathds{1}_{\{\eta \leq a_{2i} \leq \theta\}}a_{2i}
	+ \sum_{\substack{i =1 \\ q_{r-2i}(a^*/N) < \sqrt{N}}}^{\lfloor r/2 \rfloor } \mathds{1}_{\{\eta \leq a_{2i} \leq \theta\}}a_{2i}
	\end{equation}
and a similar expression for $S_{\textup{o}}(a/N)$. However, when trying to rewrite the second sum on the right-hand side of \eqref{heur_even} in terms of the partial quotients of $a^*/N$, similar to equation \eqref{symmetry}, then we encounter a problem which arises from the parity condition: re-indexing gives us either
	\begin{equation}\label{a_in_A}
	\sum_{\substack{i =1 \\ q_{r-2i}(a^*/N) < \sqrt{N}}}^{\lfloor r/2 \rfloor} \mathds{1}_{\{\eta \leq a_{2i} \leq \theta\}}a_{2i}
	\approx 
	\sum_{\substack{i =1 \\ q_{2i-1}(a^*/N) < \sqrt{N}}}^{\lfloor r/2 \rfloor}  \mathds{1}_{\{\eta \leq a^*_{2i} \leq \theta\}}a^*_{2i}
	\end{equation}	
	or	
		\begin{equation}\label{a_in_B}
	\sum_{\substack{i =1 \\ q_{r-2i}(a^*/N) < \sqrt{N}}}^{\lfloor r/2 \rfloor}  \mathds{1}_{\{\eta \leq a_{2i} \leq \theta\}}a_{2i}
	\approx 
	\sum_{\substack{i =1 \\ q_{2i-2}(a^*/N) < \sqrt{N}}}^{\lfloor r/2 \rfloor}  \mathds{1}_{\{\eta \leq a^*_{2i-1} \leq \theta\}}a^*_{2i-1},
	\end{equation}
	depending on the parity of the length $r$, and on whether $a< N/2$ or not. The same happens for
	$S_{\textup{o}}$. So writing
	
\begin{equation}
	\label{def_e'}
		S'_{\textup{e}}(a/N) = \sum_{\substack{i =1 \\ 2 \leq q_{2i-1} < \sqrt{N}}}^{\lfloor r/2 \rfloor} \mathds{1}_{\{\eta \leq a_{2i} \leq \theta\}}a_{2i}, \quad	S'_{\textup{o}}(a/N) = \sum_{\substack{i =1 \\ 2 \leq q_{2i} < \sqrt{N}}}^{\lceil r/2 \rceil -1}\mathds{1}_{\{\eta \leq a_{2i+1} \leq \theta\}} a_{2i+1},
\end{equation}
	we have
	
	\[S_{\mathrm{alt},\eta,\theta}(a/N) \approx S'_{\textup{e}}(a/N) - S'_{\textup{o}}(a/N) + S'_{\textup{e}}(a^*/N) - S'_{\textup{o}}(a^*/N)\]
	if $a$ falls into the case where \eqref{a_in_A} holds, whereas 
	
	\[S_{\mathrm{alt},\eta,\theta}(a/N) \approx S'_{\textup{e}}(a/N) - S'_{\textup{o}}(a/N) - S'_{\textup{e}}(a^*/N) + S'_{\textup{o}}(a^*/N)\]
if we are in case $\eqref{a_in_B}$.
We will show that the average values of $S'_{\textup{e}}, S'_{\textup{o}}$ are equal and cancel out, so the key point will be to bound the variances
of $S'_{\textup{e}}, S'_{\textup{o}}$. 
We will see that it actually suffices to consider $S'_{\textup{e}}$.
Note that the parity of $i$ is equivalent to whether $\frac{p_i}{q_i}$ approximates $\frac{a}{N}$ 
from the left or from the right. Hence, we can count partial quotients with even index by 
taking one-sided weight functions $w_{\textup{e}}\left(b/k,\cdot\right)$ which are only supported on reals $x$ that are
smaller than $b/k$ (in contrast to the two-sided weight functions in Section \ref{sec_weight}). This will allow us to obtain a non-trivial estimate
on the variance of $S'_{\textup{e}}$, and lead to the desired bound on the second moment of $S_{\mathrm{alt}}$.

We note that this method would allow us to obtain estimates on the mean and the variance of more general sums
\[\sum_{\substack{1 \leq i \leq r,\\ i \textup{ even}}} f(a_{i})\mathds{1}_{\{\eta \leq a_{i} \leq \theta\}} \quad \text{and} \quad \sum_{\substack{1 \leq i \leq r,\\ i \textup{ odd}}} f(a_{i})\mathds{1}_{\{\eta \leq a_{i} \leq \theta\}},\]
with $f$ being a non-decreasing function. However, we can only distinguish between even and odd indices of partial quotients, so that
statistics for sums of a more general form such as
$\sum\limits_{1 \leq i \leq r/3} a_{3i}$
would need a different approach.

\begin{lemma}\label{var_ded}
	Let $1 \leq \eta \leq \theta$ be integers such that $\theta \le (\log N)^C$ for some constant $C > 0$. Then we have
	\begin{equation*}\label{var_ded_eq}
	\begin{split}
	\frac{1}{\varphi(N)} \sum_{a \in \mathbb{Z}_N^*} S_{\mathrm{alt},\eta,\theta}\left(\frac{a}{N}\right)^2 
	\ll \frac{1 + \log \frac{\theta}{\eta}}{\eta}(\log N)^2 + \theta \log N + \left(1 + \log \frac{\theta}{\eta}\right)^2 \log N (\log \log N)^3,
	\end{split}
	\end{equation*}
	where the implied constant only depends on $C$.
\end{lemma}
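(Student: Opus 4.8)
The plan is to transport the machinery of Sections~\ref{sec_weight}--\ref{sec_variance} from the two-sided weight functions to the \emph{one-sided} weight functions $w_{\textup{e}},w_{\textup{o}}$ described before the statement of Lemma~\ref{var_ded}, working throughout with the \emph{signed} object $w_{\textup{e}}-w_{\textup{o}}$ rather than with $S'_{\textup{e}},S'_{\textup{o}}$ separately. Fix $f(x)=x$, put $c_i(a/N)=\mathds{1}_{\{2\le q_{i-1}<\sqrt N\}}\mathds{1}_{\{\eta\le a_i\le\theta\}}a_i$, and let $T(a/N)=\sum_i(-1)^i c_i(a/N)$ be the alternating sum truncated to convergent denominators $q_{i-1}<\sqrt N$; the parity of $i$ determines the side from which $p_{i-1}/q_{i-1}$ approximates $a/N$, so that $T(a/N)=\sum_{1<k<\sqrt N}\sum_{b\in\mathbb{Z}_k^*}(w_{\textup{e}}-w_{\textup{o}})(b/k,a/N)$ and $|T(a/N)|\le S_{f,\eta,\theta}(a/N)$. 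First I would run the reflection argument of Lemma~\ref{lemma_weightfunctions} keeping track of the index parity: the identity~\eqref{identity} and its $a>N/2$ analogue yield, after re-indexing, an equality $S_{\mathrm{alt},\eta,\theta}(a/N)=T(a/N)\pm T(a^*/N)+E(a)$, where the sign depends on the parity of $r$ and on whether $a\lessgtr N/2$ (cases~\eqref{a_in_A}--\eqref{a_in_B}) and $E(a)$ collects the boundary terms $\sum_{\sqrt N/(\theta+2)<q_{i-1}<\sqrt N}(\cdots)$, the $\xi_{f,\eta,\theta}$-type corrections from Lemma~\ref{lemma_weight_square}, and the $O(1)$-per-index corrections occurring when $a_{i-1}=1$ or at the ends of the summation range. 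Squaring gives $S_{\mathrm{alt},\eta,\theta}(a/N)^2\ll T(a/N)^2+T(a^*/N)^2+E(a)^2$, and since $a\mapsto a^*$ is a bijection of $\mathbb{Z}_N^*$, the lemma reduces to estimating $\tfrac1{\varphi(N)}\sum_a T(a/N)^2$ together with a contribution of $E(a)^2$ which — using $\Xi_{f,\eta,\theta}\ll1$ here, the Cauchy--Schwarz bound $\sum_i|c_i|\mathds{1}_{\{a_{i-2}=\theta\}}f(\theta)\le S_{f^2,\eta,\theta}(a/N)$ as in Lemma~\ref{lemma_weight_square}, and $\tfrac1{\varphi(N)}\sum_a\mathds{1}_{\{\eta\le a_1\le\theta\}}a_1^2\ll\theta\log\log N$ — fits inside $\theta\log N+B_{f,\eta,\theta}^2\log N(\log\log N)^3$.

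To bound $\tfrac1{\varphi(N)}\sum_a T^2$, expand $T^2$ into diagonal and off-diagonal parts. The diagonal $\sum_i c_i^2\le S_{f^2,\eta,\theta}(a/N)$ has average $\ll B_{f^2,\eta,\theta}\log N\asymp\theta\log N$ by Theorem~\ref{expectedvaluetheorem}. In the off-diagonal part $2\sum_i(-1)^ic_i\sum_{j<i}(-1)^jc_j$ one must \emph{not} pass to absolute values (that would revive a term of order $(\log N)^2$ with no power of $1/\eta$); instead one uses $\sum_{j<i}(-1)^jc_j=S_{\mathrm{alt},\eta,\theta}(p_{i-1}/q_{i-1})$ up to an $i$-independent correction $\mathds{1}_{\{\eta\le a_1\le\theta\}}a_1-\mathds{1}_{\{a_1=1\}}\mathds{1}_{\{\eta\le a_2\le\theta\}}a_2$ (disposed of by Cauchy--Schwarz against $T$, its second moment being $\ll\theta\log\log N$) and the $a_{i-1}=1$ correction already folded into $E$. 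Splitting the off-diagonal at $q_{i-1}<\sqrt N/e^{c(T)}$, $\sqrt N/e^{c(T)}\le q_{i-1}<\sqrt N/R$ and $q_{i-1}\ge\sqrt N/R$ (with $R=\lceil\sqrt{C_{f,\eta,\theta}}\rceil$, $T=\theta B_{f,\eta,\theta}(\log N)^2$, $c(T)=4(\log T)^2/\log\log T$), Koksma's inequality (Lemma~\ref{lemma_koksma}, with $V(w_{\textup{e}}(b/k,\cdot);[0,1]),V(w_{\textup{o}}(b/k,\cdot);[0,1])\le2f(\theta)$) and the discrepancy estimate of Lemma~\ref{lemma_Manuel} turn the dominant range into
\[\frac2{\varphi(N)}\sum_{1<k\le\sqrt N/e^{c(T)}}\sum_{b\in\mathbb{Z}_k^*}S_{\mathrm{alt},\eta,\theta}\!\left(\frac bk\right)\int_0^1(w_{\textup{e}}-w_{\textup{o}})\!\left(\frac bk,x\right)\mathrm{d}x\;+\;(\text{discrepancy error}),\]
the error being negligible for this choice of $T$.

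The main term is where the one-sidedness pays off: for $b/k=[0;b_1,\dots,b_s]$ and $y=q_{s-1}(b/k)/k\in(0,1)$, the support of $w_{\textup{e}}(b/k,\cdot)$ lies to the left of $b/k$ and that of $w_{\textup{o}}(b/k,\cdot)$ to its right, and a computation as in Lemma~\ref{integrallemma} gives
\[\left|\int_0^1(w_{\textup{e}}-w_{\textup{o}})\!\left(\frac bk,x\right)\mathrm{d}x\right|=\frac1{k^2}\left|\sum_{m=\eta}^\theta f(m)\left(\frac1{(m+y)(m+1+y)}-\frac1{(m+1-y)(m+2-y)}\right)\right|\ll\frac1{k^2}\sum_{m=\eta}^\theta\frac{f(m)}{m^3}\ll\frac1{k^2\eta},\]
since each summand is a difference of two telescoping fractions and hence $O(1/m^3)$ uniformly in $y$ — this produces the factor $\eta^{-1}$ for $f(x)=x$, rather than the $\eta^{-3}$ of the two-sided weight functions of Section~\ref{sec_weight}. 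Combined with $|S_{\mathrm{alt},\eta,\theta}(b/k)|\le S_{f,\eta,\theta}(b/k)$, the bound $\tfrac1{\varphi(k)}\sum_{b\in\mathbb{Z}_k^*}S_{f,\eta,\theta}(b/k)\ll B_{f,\eta,\theta}(\log k+(\log\log N)^3)$ from Theorem~\ref{expectedvaluetheorem}, and $\sum_{k\le\sqrt N}\varphi(k)\log k/k^2\ll(\log N)^2$, $\sum_{k\le\sqrt N}\varphi(k)/k^2\ll\log N$, the main term is $\ll B_{f,\eta,\theta}(\log N)^2/\eta\asymp(1+\log(\theta/\eta))(\log N)^2/\eta$, the first term of the claim. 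The two remaining off-diagonal ranges are handled exactly as in Lemma~\ref{lemma_weight_square} and Theorem~\ref{variancetheorem}: the intermediate one via the trivial discrepancy estimate for $\{a/N:1\le a\le N\}$ and $\sum_{\sqrt N/e^{c(T)}<k<\sqrt N/R}k\ll N/R^2$ (absorbing $f(\theta)/R^2\le B_{f,\eta,\theta}/5$), and the range $q_{i-1}\ge\sqrt N/R$ by noting it contains $O(\log R)$ indices, at most one with $a_i>R$, so that a Cauchy--Schwarz step against $w_{f^2,\eta,\theta}$ and against the second moment of $S_{f,\eta,\theta}$ (controlled by Theorems~\ref{expectedvaluetheorem} and~\ref{variancetheorem}) keeps everything within $\theta\log N+B_{f,\eta,\theta}^2\log N(\log\log N)^3$. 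Assembling the pieces and using ``$V\ll c+\sqrt V\,d\Rightarrow V\ll c+d^2$'' as in Theorem~\ref{variancetheorem}, together with $B_{f,\eta,\theta}\asymp1+\log(\theta/\eta)$ and $B_{f^2,\eta,\theta}\asymp\theta$, proves the lemma.

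The main obstacle is the signed off-diagonal estimate: since $\sum_{j<i}(-1)^jc_j$ cannot be replaced by $\sum_{j<i}|c_j|$, the reflection-and-truncation step of Lemma~\ref{lemma_weightfunctions} must be carried out \emph{before} squaring and compatibly with the parity splitting~\eqref{a_in_A}--\eqref{a_in_B}, and the short telescoping estimate for $\int_0^1(w_{\textup{e}}-w_{\textup{o}})(b/k,x)\,\mathrm{d}x$ is exactly what supplies the correct power of $\eta$ in the leading term. A secondary, purely technical matter is checking that the parity-tracking reflection produces only error terms of the size already appearing in Lemma~\ref{lemma_weight_square}.
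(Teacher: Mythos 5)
Your proposal is sound and arrives at the right bound, but it organises the cancellation differently from the paper. The paper reduces the problem to bounding the \emph{variance of $S'_{\textup{e}}$ alone}: it first establishes (via the involution $a\mapsto N-a$ and the formulas that become \eqref{g_1}--\eqref{g_2}) the exact identity $\sum_a S'_{\textup{e}}(a/N)=\sum_a S'_{\textup{o}}(a/N)$, which together with Cauchy--Schwarz collapses $\sum_a S'_{\mathrm{alt}}(a/N)^2$ to $16\,\mathrm{Var}(S'_{\textup{e}})$. The second moment of $S'_{\textup{e}}$ is then computed against the one-sided weight $w_{\textup{e}}$ and has a genuine main term $\frac{A^2}{16}(\log N)^2$, which only cancels \emph{after} subtracting the square of the mean; the gain in the power of $\eta$ comes from $\int_0^1 w_{\textup{e}}(b/k,x)\,dx=\frac{\pi^2A}{12k^2}+O\bigl(k^{-2}\sum_m f(m)/m^3\bigr)$. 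You instead keep the signed object $T=S'_{\textup{e}}-S'_{\textup{o}}$ intact, expand $T^2$, and push the cancellation all the way down to the local level: the telescoping estimate $\int_0^1(w_{\textup{e}}-w_{\textup{o}})(b/k,x)\,dx\ll k^{-2}\eta^{-1}$ kills the main term of the off-diagonal outright, so no ``centre and subtract'' step is needed. This is a legitimate reorganisation; the local cancellation in $\int(w_{\textup{e}}-w_{\textup{o}})$ is the same phenomenon that the paper harvests globally via $\sum_a S'_{\textup{e}}=\sum_a S'_{\textup{o}}$, and both routes rely identically on the reflection $a\mapsto a^*$ with its parity bookkeeping (cases $A$ and $B$), on Lemma~\ref{lemma_Manuel}/\ref{lemma_koksma}, and on Theorems~\ref{expectedvaluetheorem}--\ref{variancetheorem} applied to fixed denominators $k$.

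One point of imprecision you should repair: you say the ``$a_{i-1}=1$ correction'' in the identification $\sum_{j<i}(-1)^jc_j\approx S_{\mathrm{alt},\eta,\theta}(p_{i-1}/q_{i-1})$ is ``already folded into $E$''. It is not -- $E$ collects only the errors from the reflection/truncation step of the first paragraph, while the $a_{i-1}=1$ discrepancy arises in the off-diagonal expansion of $T^2$ and must be treated there. It contributes, per index $i$ with $a_{i-1}=1$, a term $\ll 1+\mathds{1}_{\{a_{i-2}\in\{\eta-1,\theta\}\}}f(\theta)$, exactly as in formula~\eqref{almost_Sbk} of Lemma~\ref{lemma_weight_square}; after multiplying by $c_i$, summing over $i$, applying Cauchy--Schwarz, and invoking $\sum_i\mathds{1}_{\{a_{i-2}=\theta\}}f(\theta)^2\le S_{f^2,\eta,\theta}(a/N)$, it lands inside $\theta\log N+B_{f,\eta,\theta}^2\log N(\log\log N)^3$, as in the paper. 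You should also note explicitly that the asymptotic formula \eqref{expectedasymptotics} of Theorem~\ref{expectedvaluetheorem} applied with denominator $k$ requires $\sqrt{k}>\theta+2$; for $k\le(\theta+2)^2$ the cruder bound \eqref{expectedupperbound} is needed, but since $\log k\ll\log\log N$ there, this range is absorbed into the error. With those two points spelled out, your argument is complete.
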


\begin{proof}
Let $S_{\textup{e}}, S_{\textup{o}}, S'_{\textup{e}}, S'_{\textup{o}}$ be defined as in \eqref{def_Se} and \eqref{def_e'}. For the rest of the proof we assume that $N$ is so large that $\theta \leq (\log N)^C < \sqrt{N}-2$, as otherwise the conclusion of the lemma holds trivially. Given $a \in \mathbb{Z}_N^*$, let $a^*$ be defined as in the proof of Lemma \ref{lemma_weightfunctions}. Assume that $a < N/2$. Recall that in this case $a^*_i = a_{r-i+1}$ for $1 \leq i \leq r$.
If $r$ is odd, then similar to \eqref{symmetry} re-indexing yields
\begin{equation} \label{caseodd}
S_{\textup{e}}'(a^*/N) =
\sum_{\substack{i=1 \\ q_{r-2i}(a^*/N) < \sqrt{N}}}^{\lfloor r/2\rfloor} \mathds{1}_{\{\eta \leq a_{2i} \leq \theta \}} a_{2i}
\end{equation} 
and
\begin{equation} \label{caseodd_B}
S_{\textup{o}}'(a^*/N) =
\sum_{\substack{i=1 \\ q_{r-2i-1}(a^*/N) < \sqrt{N}}}^{\lfloor r/2\rfloor-1} \mathds{1}_{\{\eta \leq a_{2i+1} \leq \theta \}} a_{2i+1},
\end{equation}
where we used the fact that $q_0(a^*/N) < 2$ and for $q_{r-1}(a^*) < \sqrt{N}, a_r^{*} \geq \sqrt{N}-1 > \theta$.
If $r$ is even, by a similar reasoning
\begin{equation}\label{caseeven_A}
	S_{\textup{e}}'(a^*/N) = \sum_{\substack{i=1 \\ q_{r-2i+1}(a^*/N) < \sqrt{N}}}^{r/2 -1} \mathds{1}_{\{\eta \leq a_{2i+1} \leq \theta \}} a_{2i+1}
\end{equation} 
and
\begin{equation}\label{caseeven_B}
	S_{\textup{o}}'(a^*/N) = \sum_{\substack{i=1 \\ q_{r-2i}(a^*/N) < \sqrt{N}}}^{r/2-1} \mathds{1}_{\{\eta \leq a_{2i} \leq \theta \}} a_{2i}.
\end{equation}

Turning to the case $a > N/2$, observe that the conditions $2 \leq q_i(a/N) < \sqrt{N}$, together with $a_1 = 1$ and $\theta < \sqrt{N}-2$, imply 
that $a_1,a_2,a_r$ do not contribute to $S_{\textup{e}}'(a/N), S_{\textup{o}}'(a/N)$.
Since $a_{i}^* = a_{r-i+2}$ for $3 \leq i \leq r-1$,
it follows for odd $r$ that
\begin{equation}\label{caseodd2_A}
		S_{\textup{e}}'(a^*/N) = \sum_{\substack{i=1 \\ q_{r-2i}(a^*/N) < \sqrt{N}}}^{\lfloor r/2\rfloor -1} \mathds{1}_{\{\eta \leq a_{2i+1} \leq \theta \}} a_{2i+1}
\end{equation}
and
\begin{equation} \label{caseodd2_B}
		S_{\textup{o}}'(a^*/N) = \sum_{\substack{i=2 \\ q_{r-2i+1}(a^*/N) < \sqrt{N}}}^{\lfloor r/2\rfloor} \mathds{1}_{\{\eta \leq a_{2i} \leq \theta \}} a_{2i},
\end{equation}
whereas for even $r$ we have
\begin{equation}\label{caseeven2_A}
		S_{\textup{e}}'(a^*/N) =
		\sum_{\substack{i=1 \\ q_{r-2i-1}(a^*/N) < \sqrt{N}}}^{r/2-1} \mathds{1}_{\{\eta \leq a_{2i} \leq \theta \}} a_{2i}
\end{equation}
and
\begin{equation} \label{caseeven2_B}
		S_{\textup{o}}'(a^*/N) =
		\sum_{\substack{i=1 \\ q_{r-2i}(a^*/N) < \sqrt{N}}}^{r/2 -1} \mathds{1}_{\{\eta \leq a_{2i+1} \leq \theta \}} a_{2i+1}.
\end{equation}

We partition $\mathbb{Z}_N^*$ into the sets $A,B$ which are defined by
\begin{equation*}
	\begin{split}A &:= \left\{a \in \mathbb{Z}_N^*: a < \frac{N}{2} \text{ with } r(a/N) \text{ odd, or }
		a > \frac{N}{2} \text{ with } r(a/N) \text{ even}\right\},\\
		B &:= \left\{a \in \mathbb{Z}_N^*: a < \frac{N}{2} \text{ with } r(a/N) \text{ even, or }
		a > \frac{N}{2} \text{ with } r(a/N) \text{ odd}\right\}.
	\end{split}
\end{equation*}

We see that if $a \in A$, we have \eqref{caseodd} and \eqref{caseodd_B}, or \eqref{caseeven2_A} and \eqref{caseeven2_B}, so that

\begin{equation}\label{caseA_A}
	\begin{split}
		& \left\lvert
		S_{\textup{e}}(a/N)
		 - S_{\textup{e}}'(a/N) - S_{\textup{e}}'(a^*/N) \right\rvert \\
		& \qquad \leq \mathds{1}_{\{ a_1=1 \} } \mathds{1}_{\{\eta \leq a_{2} \leq \theta \}}a_2 + \mathds{1}_{\{\eta \leq a_{r} \leq \theta \}}a_r + \sum_{\substack{i = 1\\ \frac{\sqrt{N}}{\theta +2} < q_{i-1} < \sqrt{N}}}^{r}\mathds{1}_{\{\eta \leq a_{i} \leq \theta \}} a_{i},
    \end{split}
\end{equation}
and
\begin{equation}\label{caseA_B}
	\begin{split}
		& \left\lvert S_{\textup{o}}(a/N)
		 - S_{\textup{o}}'(a/N) - S_{\textup{o}}'(a^*/N) \right\rvert \\
		& \qquad \leq \mathds{1}_{\{\eta \leq a_{1} \leq \theta \}}a_1 + \mathds{1}_{\{\eta \leq a_{r} \leq \theta \}}a_r + \sum_{\substack{i = 1\\ \frac{\sqrt{N}}{\theta +2} < q_{i-1} < \sqrt{N}}}^{r}\mathds{1}_{\{\eta \leq a_{i} \leq \theta \}} a_{i}.
	\end{split}
\end{equation}

Here we used the fact (proven in Lemma \ref{lemma_weightfunctions}) that
for $a < N/2$ and $a_i \leq \theta$, $q_{i-1}(a/N) < \sqrt{N}, q_{r-i}(a^*/N) < \sqrt{N}$ implies $\frac{\sqrt{N}}{\theta +2} < q_{i-1}(a/N) < \sqrt{N}$,
and the same holds for $a > N/2$ with $q_{r-i}(a^*/N)$ replaced by $q_{r-i+1}(a^*/N)$.
If $a \in B$, we have \eqref{caseeven_A} and \eqref{caseeven_B}, or \eqref{caseodd2_A} and \eqref{caseodd2_B}, so that by similar arguments we get

\begin{equation}\label{caseB_A}
	\begin{split}
		& \left\lvert
		S_{\textup{e}}(a/N)
		 - S_{\textup{e}}'(a/N) - S_{\textup{o}}'(a^*/N) \right\rvert \\
		& \qquad \leq \mathds{1}_{\{a_1=1\}} \mathds{1}_{\{\eta \leq a_{2} \leq \theta \}}a_2 + \mathds{1}_{\{\eta \leq a_{r} \leq \theta \}}a_r + \sum_{\substack{i = 1 \\ \frac{\sqrt{N}}{\theta +2} < q_{i-1} < \sqrt{N}}}^{r}\mathds{1}_{\{\eta \leq a_{i} \leq \theta \}} a_{i}
\end{split}
\end{equation}
and
\begin{equation} \begin{split} \label{caseB_B}
		& \left\lvert S_{\textup{o}}(a/N)
		- S_{\textup{o}}'(a/N) - S_{\textup{e}}'(a^*/N) \right\rvert \\
		&  \qquad \leq \mathds{1}_{\{\eta \leq a_{1} \leq \theta \}}a_1 + \mathds{1}_{\{\eta \leq a_{r} \leq \theta \}}a_r + \sum_{\substack{i = 1\\ \frac{\sqrt{N}}{\theta +2} < q_{i-1} < \sqrt{N}}}^{r}\mathds{1}_{\{\eta \leq a_{i} \leq \theta \}} a_{i}.
	\end{split}
\end{equation}

Motivated by \eqref{caseA_A}--\eqref{caseB_B}, we define 

\begin{equation}\label{def_sum'}
	S_{\mathrm{alt}}'\left(\frac{a}{N}\right) :=\begin{cases}
		S'_{\textup{e}}(a/N) - S'_{\textup{o}}(a/N) + S'_{\textup{e}}(a^*/N) - S'_{\textup{o}}(a^*/N) &\text{ if } a \in A,\\
		S'_{\textup{e}}(a/N) - S'_{\textup{o}}(a/N) + S'_{\textup{o}}(a^*/N) - S'_{\textup{e}}(a^*/N) &\text{ if } a \in B.
	\end{cases}
\end{equation}
Clearly, we have

\begin{equation}\label{split_error}
	\frac{1}{\varphi(N)} \sum_{a \in \mathbb{Z}_N^*} S_{\mathrm{alt}}\left(\frac{a}{N}\right)^2 
	\ll 
	\frac{1}{\varphi(N)} \sum_{a \in \mathbb{Z}_N^*} S_{\mathrm{alt}}'\left(\frac{a}{N}\right)^2
	+	\frac{1}{\varphi(N)} \sum_{a \in \mathbb{Z}_N^*} R\left(\frac{a}{N}\right)^2,
\end{equation}
where $R(a/N) := |S_{\mathrm{alt}}'(a/N) - S_{\mathrm{alt}}(a/N)|$.
Starting with the last sum in the previous formula, we can deduce from \eqref{caseA_A}--\eqref{caseB_B} that for all $a \in \mathbb{Z}_N^*$ we have

\begin{equation}\label{beginR} R\left(\frac{a}{N}\right) \ll \sum_{\substack{i = 1\\ \frac{\sqrt{N}}{\theta +2} < q_{i-1} < \sqrt{N}}}^{r}\mathds{1}_{\{\eta \leq a_{i} \leq \theta \}} a_{i}
	+  \mathds{1}_{\{\eta \leq a_1 \leq \theta\}}a_1
	+ \mathds{1}_{\{a_1 = 1\}}\mathds{1}_{\{\eta \leq a_2 \leq \theta\}}a_2
	+  \mathds{1}_{\{\eta \leq a_r \leq \theta\}}a_r.
\end{equation}
	Arguing as in Lemma \ref{lemma_weightfunctions}, we have with $f(x) = x$ that
	
	\begin{equation} \begin{split} \label{err_estim_2} \mathds{1}_{\{\eta \leq a_1 \leq \theta\}}a_1
	+ \mathds{1}_{\{a_1 = 1\}}\mathds{1}_{\{\eta \leq a_2 \leq \theta\}}a_2
	+  \mathds{1}_{\{\eta \leq a_r \leq \theta\}}a_r
	\\ \qquad \leq w_{f,\eta,\theta}\left(1,\frac{a}{N}\right) + w_{f,\eta,\theta}\left(1,\frac{a^*}{N}\right), \end{split}
	\end{equation}
	where $w_{f,\eta,\theta}$ is defined as in \eqref{def_weight}. With an argument similar to the one leading to \eqref{smallai} we obtain
	
	\begin{equation}\label{err_estim_1}\begin{split}
	&  \sum_{\substack{i=1\\ \frac{\sqrt{N}}{\theta +2}< q_{i-1}(a/N)<\sqrt{N}}}^r \mathds{1}_{\{\eta \leq a_i \leq \theta\}}a_i
	 \\ &\ll \sum_{\substack{i=1\\ \frac{\sqrt{N}}{\theta +2}< q_{i-1}(a/N)<\sqrt{N}}}^r \mathds{1}_{\{1 \leq a_i \leq \sqrt{\theta+2}\}} a_i
	+ \sum_{\substack{i=1\\ \frac{\sqrt{N}}{\theta +2}< q_{i-1}(a/N)<\sqrt{N}}}^r \mathds{1}_{\{\sqrt{\theta+2} \leq a_i \leq \theta\}}a_i\\
	&\ll  \sqrt{\theta} \sum_{\substack{i =1 \\ \frac{\sqrt{N}}{\theta +2} \leq q_{i-1}<\sqrt{N}}}^r 1
	+ \sum_{\substack{i=1\\ \frac{\sqrt{N}}{\theta +2}< q_{i-1}(a/N)<\sqrt{N}}}^r \mathds{1}_{\{\sqrt{\theta+2} \leq a_i \leq \theta\}}a_i
	\\&\ll  \sqrt{\theta}\log \theta + \sum_{\substack{i=1\\ \frac{\sqrt{N}}{\theta +2}< q_{i-1}(a/N)<\sqrt{N}}}^r \mathds{1}_{\{\sqrt{\theta+2} \leq a_i \leq \theta\}}a_i.
	\end{split}
	\end{equation}
	Using \eqref{beginR}--\eqref{err_estim_1}, an application of the Cauchy--Schwarz inequality, and the observation that
	the last sum in \eqref{err_estim_1}
	 contains at most $2$ terms, we get
	
	\begin{align*}
	& \frac{1}{\varphi(N)}\sum_{a \in \mathbb{Z}_N^*}R\left(\frac{a}{N}\right)^2 \\ &\ll \theta (\log \theta)^2+ \frac{1}{\varphi(N)}\sum_{a \in \mathbb{Z}_N}\left(\sum_{\frac{\sqrt{N}}{\theta+2} < k < \sqrt{N}}\sum_{b \in \mathbb{Z}_k^*}w_{f^2,\eta,\theta}\left(\frac{b}{k},\frac{a}{N}\right) + w_{f^2,\eta,\theta}\left(1,\frac{a}{N}\right) \right)
	\\&\ll \theta (\log \log N)^2+ W_{1,f^2} + \sum_{\frac{\sqrt{N}}{\theta+2} < k < \sqrt{N}} W_{k,f^2},
	\end{align*}
    where $W_{k,f^2}$ is as in the proof of Theorem \ref{expectedvaluetheorem}.
    We apply \eqref{arguing_as_here} to obtain
	
	\begin{equation*}\label{var_R}
	\frac{1}{\varphi(N)}\sum_{a \in \mathbb{Z}_N^*}R\left(\frac{a}{N}\right)^2 \ll \theta (\log \log N)^2.
	\end{equation*}

		
Now we come to the first term on the right-hand side of \eqref{split_error}. Note that for $a/N < 1/2$ we have $1-a/N = [0;1,a_1(a/N)-1,\ldots,a_r(a/N)]$, which implies
\[q_{1}(1-a/N) = 1, \quad q_{i}(1-a/N) = q_{i-1}(a/N) \quad \text{for } i \geq 2.\]
Thus for every function $g$ and every set $S \subseteq \{2, \ldots, N\}$ we have
\begin{equation} \label{g_1}
\sum_{\substack{i=1 \\ q_{2i-1}(a/N) \in S}}^{\lfloor r(a/N)/2\rfloor} g(a_{2i}(a/N)) = \sum_{\substack{i=1 \\ q_{2i}(1-a/N) \in S}}^{\lceil r(1-a/N)/2\rceil-1} g(a_{2i+1}(1-a/N))
\end{equation} 
and
\begin{equation} \label{g_2}
			\sum_{\substack{i=1 \\ q_{2i+1}(1-a/N) \in S}}^{\lfloor r(1-a/N)/2\rfloor -1} g(a_{2i+2}(1-a/N))
			=
			 \sum_{\substack{i=1 \\ q_{2i}(a/N) \in S}}^{\lceil r(a/N)/2\rceil-1} g(a_{2i+1}(a/N)).
	\end{equation}
Here we used the fact that the restrictions $q_{2i-1} \geq 2$ resp.\ $q_{2i} \geq 2$ remove the contribution of $a_1$, and if $a_1 = 1$, also the contribution of $a_2$. 
In particular, \eqref{g_1} and \eqref{g_2} imply that $\sum_{a \in \mathbb{Z}_N^*}S_{\textup{e}}'(a/N)=  \sum_{a \in \mathbb{Z}_N^*}S_{\textup{o}}'(a/N)$, so we can deduce that for $a \in A$
	\begin{equation*}
	\begin{split}\label{cs_trick}
	S_{\mathrm{alt}}'(a/N)^2 &= 
	\left(S'_{\textup{e}}(a/N) + S'_{\textup{e}}(a^*/N) -  S'_{\textup{o}}(a/N) - S'_{\textup{o}}(a^*/N)\right)^2
	\\&= \Bigg(S'_{\textup{e}}(a/N)-\frac{1}{\varphi(N)}\sum_{b \in \mathbb{Z}_N^*}S_{\textup{e}}'(b/N)  + S'_{\textup{e}}(a^*/N)-\frac{1}{\varphi(N)} \sum_{b \in \mathbb{Z}_N^*}S_{\textup{e}}'(b/N) \\&- S'_{\textup{o}}(a/N)+\frac{1}{\varphi(N)}\sum_{b \in \mathbb{Z}_N^*}S_{\textup{o}}'(b/N) - S'_{\textup{o}}(a^*/N)+\frac{1}{\varphi(N)}\sum_{b \in \mathbb{Z}_N^*}S_{\textup{o}}'(b/N)\Bigg)^2
	\\&\leq 4\left(S'_{\textup{e}}(a/N)-\frac{1}{\varphi(N)}\sum_{b \in \mathbb{Z}_N^*}S_{\textup{e}}'(b/N)\right)^2  + 4\left(S'_{\textup{e}}(a^*/N)-\frac{1}{\varphi(N)}\sum_{b \in \mathbb{Z}_N^*}S_{\textup{e}}'(b/N)\right)^2 
	\\&+ 4\left(S'_{\textup{o}}(a/N)-\frac{1}{\varphi(N)}\sum_{b \in \mathbb{Z}_N^*}S_{\textup{o}}'(b/N)\right)^2 + 4\left(S'_{\textup{o}}(a^*/N)-\frac{1}{\varphi(N)}\sum_{b \in \mathbb{Z}_N^*}S_{\textup{o}}'(b/N)\right)^2,
	\end{split}
	\end{equation*}
	where we used the Cauchy--Schwarz inequality in the last step. By an analogous argument, the same formula also holds for $a \in B$.
	Since $a \mapsto a^*$ is a bijection on $\mathbb{Z}_N^*$, this implies
		\begin{equation}\label{var_reduced}
	\begin{split}
	\frac{1}{\varphi(N)}\sum_{a \in \mathbb{Z_N^*}} S_{\mathrm{alt}}'(a/N)^2
	\leq\; & 8 	\frac{1}{\varphi(N)}\sum_{a \in \mathbb{Z_N^*}} \left(S'_{\textup{e}}(a/N)-\frac{1}{\varphi(N)}\sum_{b \in \mathbb{Z}_N^*}S_{\textup{e}}'(b/N)\right)^2
	\\+\; & 8 	\frac{1}{\varphi(N)}\sum_{a \in \mathbb{Z_N^*}} \left(S'_{\textup{o}}(a/N)-\frac{1}{\varphi(N)}\sum_{b \in \mathbb{Z}_N^*}S_{\textup{o}}'(b/N)\right)^2
	\\=\; &16\left(\frac{1}{\varphi(N)}\sum_{a \in \mathbb{Z_N^*}} S'_{\textup{e}}(a/N)^2-\left(\frac{1}{\varphi(N)}\sum_{b \in \mathbb{Z}_N^*}S_{\textup{e}}'(b/N)\right)^2\right),
	\end{split}
	\end{equation}
where we used \eqref{g_1} and \eqref{g_2} in the last step.
It remains to bound the variance of $S'_e$. For the rest of this section, we fix $f(x) = x$, although the same procedure works for general non-decreasing $f$. 
Note that
\[
\sum_{a \in \mathbb{Z}_N^*} S_{\textup{e}}'(a/N) = \sum_{a \in \mathbb{Z}_N^*} S_{\textup{o}}'(a/N)
= \frac{1}{2}\sum_{2 \leq k < \sqrt{N}} w_{f,\eta,\theta}\left(\frac{b}{k},\frac{a}{N}\right).
\]
Using \eqref{upper_weightfct} and \eqref{k=1_estimate} as well as Theorem \ref{expectedvaluetheorem}, we obtain

\begin{equation}\label{ded_expsq}\left(\frac{1}{\varphi(N)}\sum_{a \in \mathbb{Z}_N^*}S_{\textup{e}}'(a/N)\right)^2
= \frac{A_{f,\eta,\theta}^2}{16} (\log N)^2 + O\left(B_{f,\eta,\theta}^2
\log N (\log \log N)^3\right).
\end{equation}

Defining $S_{\textup{e}}^{(\ge 2)}(a/N) := \sum\limits_{\substack{i=1\\ q_{2i-1} \ge 2}}^{\lfloor r/2 \rfloor} \mathds{1}_{\{\eta \leq a_{2i} \leq \theta\}}a_{2i}$
and applying \eqref{almost_Sbk}, we obtain

\[\begin{split}&\sum_{a \in \mathbb{Z}_N^*} \sum_{\substack{i = 1\\
			2 \leq q_{2i-1} < \sqrt{N}}}^{\lfloor r/2 \rfloor} \mathds{1}_{\{\eta \leq a_{2i} \leq \theta\}}a_{2i}
	\sum_{\substack{j = 1\\
			2 \leq q_{2j-1} < \sqrt{N}}}^{ \lfloor r/2 \rfloor}  \mathds{1}_{\{\eta \leq a_{2j} \leq \theta\}}a_{2j}
	\\&\quad\leq 2\sum_{a \in \mathbb{Z}_N^*} \sum_{\substack{i = 1\\
			2 \leq q_{2i-1} < \frac{\sqrt{N}}{e^{c(T)}}}}^{\lfloor r/2 \rfloor}\mathds{1}_{\{\eta \leq a_{2i} \leq \theta\}}a_{2i}\cdot
	S_{\textup{e}}^{(\ge 2)}\left(\frac{p_{2i-1}}{q_{2i-1}}	\right)
\\ &\quad+ \sum_{a \in \mathbb{Z}_N^*}\sum_{i=1}^r  \mathds{1}_{\{\eta \leq a_{i} \leq \theta\}}a_i^2 + 2\sum_{a \in \mathbb{Z}_N^*} \sum_{\substack{i = 1\\
		\frac{\sqrt{N}}{e^{c(T)}} \leq q_{i-1} < \sqrt{N}}}^r\mathds{1}_{\{\eta \leq a_{i} \leq \theta\}}a_{i}
\left( S_{f,\eta,\theta}\left(\frac{p_{i-1}}{q_{i-1}}\right)
+ \mathds{1}_{\{a_{i-2} = \theta\}}\theta
\right)
 \\ &\quad+2\sum_{a \in \mathbb{Z}_N^*} \sum_{\substack{i = 1\\ 2 \leq q_{i-1} < \frac{\sqrt{N}}{e^{c(T)}}}}^{\lfloor r/2 \rfloor}\mathds{1}_{\{\eta \leq a_{i} \leq \theta\}}a_{i}\mathds{1}_{\{a_{i-2} = \theta\}}\theta,
\end{split}\]
where $T$ and $c(T)$ are chosen as in the proof of Theorem \ref{variancetheorem}.
Everything except the first term in this equation can be estimated as in the proof of Lemma \ref{lemma_weight_square} resp.\ Theorem \ref{variancetheorem}.
Counting partial quotients with even indices is equivalent to only considering convergents $b/k$ for which $b/k - a/N > 0$, because $p_{2i}/q_{2i} < a/N < p_{2i-1}/q_{2i-1}$. 
	Accordingly we define $I_{\textup{left}}(b/k,m)$ to be the one interval out of $I(b/k,m)$, $I'(b/k,m)$, that lies to the left of $b/k$. That is,
	
	\[I_{\textup{left}}(b/k,m) := \begin{cases}
		I(b/k,m) &\text{ if } 	I(b/k,m) \subseteq [0,b/k],\\
		I'(b/k,m) &\text{ if } I(b/k,m) \subseteq [b/k,1].
	\end{cases}\]
	Further, we define the one-sided weight function
	
	\[w_{\textup{e},\eta,\theta} \left( \frac{b}{k}, x \right) = \sum_{m=\eta}^{\theta} m   \cdot \mathds{1}_{I_{\textup{left}}(b/k,m)} \left(x \right)\]
	to obtain
	
	\[
	\sum_{a \in \mathbb{Z}_N^*} \sum_{\substack{i = 1\\
						2 \leq q_{2i-1} < \sqrt{N}}}^{\lfloor r/2\rfloor }\mathds{1}_{\{\eta \leq a_{2i} \leq \theta\}}a_{2i}\cdot
				S_{\textup{e}}^{(\ge 2)}\left(\frac{p_{2i-1}}{q_{2i-1}}\right) 
				= \sum_{a \in \mathbb{Z}_N^*} \sum_{2 \leq k < \sqrt{N}}
					\sum_{b \in \mathbb{Z}_k^*}w_{\textup{e},\eta,\theta} \left( \frac{b}{k}, \frac{a}{N} \right)
					S_{\textup{e}}^{(\ge 2)} \left(\frac{b}{k}\right).
	\]
	Using an analogue of \eqref{fixkbsum} and
	\[
	\int_{0}^1 w_{\textup{e},\eta,\theta} \left( \frac{b}{k}, x \right)  \, \mathrm{d} x = \frac{\pi^2 A_{f,\eta,\theta}}{12k^2} + O\left(
	\frac{1}{k^2}\sum_{m = \eta}^{\theta} \frac{f(m)}{m^3}
	\right),
	\]
	we get
	\[
	\frac{1}{\varphi(N)}\sum_{a \in \mathbb{Z}_N^*} w_{\textup{e},\eta,\theta} \left( \frac{b}{k}, \frac{a}{N} \right)
	= \frac{\pi^2 A_{f,\eta,\theta}}{12k^2} + O\left(
	\frac{1}{k^2}\sum_{m = \eta}^{\theta} \frac{f(m)}{m^3}+ \frac{f(\theta) \log N}{k^2 T} + \frac{f(\theta) e^{c(T)}}{\varphi (N)} \right).
	\]
	Summing over $b$ and $k$ and applying Theorem \ref{expectedvaluetheorem} for fixed denominator $k$, together with \eqref{g_1} and \eqref{g_2} this shows
	analogously to \eqref{smallkcontribution}
	that 
	\[\begin{split}
	&\sum_{a \in \mathbb{Z}_N^*} \sum_{2 \leq k < \frac{\sqrt{N}}{e^{c(T)}}}
	\sum_{b \in \mathbb{Z}_k^*}w_{\textup{e},\eta,\theta} \left( \frac{b}{k}, \frac{a}{N} \right)
	S_{\textup{e}}^{(\ge 2)} \left(\frac{b}{k}\right)
	\\&\quad \leq \frac{A_{f,\eta,\theta}^2}{32} (\log N)^2 + O \left( D'_{f,\eta,\theta} (\log N)^2 + B_{f,\eta,\theta}^2 \log N (\log \log N)^3 \right),
	\end{split}
	\]
	where $D'_{f,\eta,\theta} := B_{f,\eta,\theta}\sum_{m = \eta}^{\theta} \frac{f(m)}{m^3}.$
	Combining the previous estimates proves the desired result.
	\end{proof}

\begin{proof}[Proof of Theorem \ref{Dtheorem}]
	We can now prove Theorem \ref{Dtheorem} in the precisely same fashion as 
	Theorem \ref{Stheorem}, with the only difference that
   \eqref{dyadic_variance} is replaced by
	\begin{equation}\label{dyadic_ded}\frac{1}{\varphi(N)} \sum_{a \in \mathbb{Z}_N^*} S_{\mathrm{alt},2^{u-1},2^u -1}\left(\frac{a}{N}\right)^2 
	\ll 2^u \log N + \frac{(\log N)^2}{2^u},\end{equation}
	which we obtain by an application of Lemma \ref{var_ded} with $\eta = 2^{u-1}, \theta = 2^u -1$.
\end{proof}

\section*{Acknowledgments}

CA is supported by the Austrian Science Fund (FWF), projects F-5512, I-4945, I-5554, P-34763, P-35322 and Y-901. BB is supported by FWF project M-3260. The authors want to thank Gerhard Larcher, whose comments on his paper \cite{larcher} were the starting point for their work resulting in the present paper. The authors also want to thank Sandro Bettin and Sary Drappeau for several interesting discussions on the topic of the paper. Furthermore, they want to thank Alexey Ustinov for many valuable remarks, and for his help to access some of the Russian-language literature. Finally, they want to thank two anonymous referees who read the paper carefully and provided valuable suggestions for improvements.

\bibliography{CF}
\bibliographystyle{abbrv}

\end{document}